\theoremstyle{plain}
\newtheorem{pro}{\hspace{6mm}Proposition}[section]
\newtheorem{lem}{\hspace{6mm}Lemma}[section]
\theoremstyle{definition}
\theoremstyle{remark}
\newcommand{\V}[1]{\mathbf{#1}}
\newcommand{\email}[1]{\href{mailto:#1}{#1}}
\title{
%
%
%
An inherent regularization approach to parameter-free preconditioning
for nearly incompressible linear poroelasticity and elasticity
}
\author{
Weizhang Huang\thanks{Department of Mathematics, the University of Kansas, 1460 Jayhawk Boulevard, Lawrence, KS 66045, USA (\email{whuang@ku.edu}).}
\and
Zhuoran Wang\thanks{Department of Mathematics, the University of Kansas, 1460 Jayhawk Boulevard, Lawrence, KS 66045, USA (\email{wangzr@ku.edu}).}
}
\date{} 
\begin{document}

\maketitle

\textbf{Abstract.}
An inherent regularization strategy and block Schur complement preconditioning are studied for
linear poroelasticity problems discretized using the lowest-order weak Galerkin finite element method
in space and the implicit Euler scheme in time. At each time step, the resulting saddle point system becomes nearly singular in the locking regime, where the solid becomes nearly incompressible.
This near-singularity stems from the leading block, which corresponds to a linear elasticity system.
To enable efficient iterative solution, this nearly singular elasticity system is first reformulated
as a saddle point problem and then regularized by adding a term to the (2,2) block.
This regularization arises naturally from an inherent identity in the original system
and preserves the solution while ensuring the non-singularity of the new system.
As a result, conventional inexact block Schur complement preconditioning becomes effective.
It is shown that the preconditioned minimal residual (MINRES) and generalized minimal residual (GMRES)
methods exhibit convergence that is essentially independent of the mesh size and the locking parameter
for the regularized linear elasticity system.
Both two-field and three-field formulations are considered for the iterative solution of the linear poroelasticity problem. The efficient solution of the two-field formulation builds upon the effective iterative solution of linear elasticity. For this formulation, MINRES and GMRES achieve parameter-free convergence when used with inexact block Schur complement preconditioning, where the application of the inverse of the leading block leverages efficient solvers for linear elasticity.
The poroelasticity problem can also be reformulated as a three-field system by introducing a numerical pressure variable into the linear elasticity part.
The inherent regularization strategy extends
naturally to this formulation, and it is demonstrated that
the preconditioned MINRES and GMRES also
exhibit parameter-free convergence for the regularized three-field system.
Numerical experiments in both two and three dimensions confirm the effectiveness of the regularization strategy and the robustness of the block preconditioners with respect to the mesh size
and locking parameter.

\vspace{5pt}

\noindent
\textbf{Keywords:}
Inherent regularization, Poroelasticity, Elasticity, Block preconditioning, Parameter-free.

\vspace{5pt}

\noindent
\textbf{Mathematics Subject Classification (2020):}
65N30, 65F08, 65F10, 74F10

\section{Introduction}
\label{SEC:intro}
We consider an inherent regularization strategy for developing parameter-free
inexact block Schur complement preconditioners for the efficient iterative
solution of linear poroelasticity and elasticity problems.
Let $ \Omega \subset \mathbb{R}^d$ $(d\ge 1) $ be a bounded domain with
Lipschitz continuous boundary $\partial \Omega$.
The governing equation for the Biot model of linear poroelasticity is given by
\begin{align}
\begin{cases}
  \displaystyle
  -\nabla \cdot \sigma(\V{u})
  + \alpha \nabla p
  = \mathbf{f}, \quad \text{ in } \Omega  \times (0,T],
\\
  \displaystyle
  \partial_t
  \big ( \alpha \nabla \cdot \V{u} +  c_0 p \big)
    - \nabla \cdot \left( \kappa \nabla p \right)
  = s, \quad \text{ in } \Omega  \times (0,T],
\end{cases}
\label{PDE-poro}
\end{align}
where
$T > 0$ denotes the final time,
$ \V{u} $ is the solid displacement,
$ \lambda$ and $\mu $ are the Lam\'{e} constants,
$\sigma(\V{u}) = 2\mu \varepsilon(\V{u}) + \lambda (\nabla \cdot \V{u}) \mathbf{I}$
is the Cauchy stress for the solid,
$ \varepsilon(\V{u}) = \frac{1}{2} ( \nabla \V{u} + (\nabla \V{u})^T ) $ is the strain tensor, 
$\mathbf{I} $ is the identity operator,
$ \mathbf{f} $ is a body force,
$ p $ is the fluid pressure,
$ s $ is a fluid source,
$ \alpha $ (usually close to $1$) is the Biot-Willis constant accounting for the coupling of the solid and fluid,
$ c_0 > 0 $ is the constrained storage capacity,
and $ \kappa $ is the permeability constant.
We consider
Dirichlet boundary conditions for both the displacement and pressure, i.e., 
\begin{align}
\V{u} = \V{u}_D, \quad p = p_D, \quad \text{on } {\partial \Omega}  \times (0,T],
\label{BC-1}
\end{align}
where $\V{u}_D$ and $p_D$ are given functions.
The initial conditions are specified as
\begin{align}
 \V{u} = \V{u}_0,
  \quad
  p = p_0,
  \quad
  \text{on} \; \Omega \times \{t=0\}.
  \label{IC-1}
\end{align}

A major challenge in the numerical solution of poroelasticity problems occurs 
when the solid becomes nearly incompressible, i.e., $\lambda \rightarrow \infty$.
In this situation, the resulting algebraic system becomes nearly singular
and the accuracy of the numerical solutions deteriorates. This phenomenon
is called the Poisson locking phenomenon \cite{Yi_SISC_2017}.
Interestingly, this locking phenomenon in poroelasticity originates from the deformation of the solid
and its handling can be focused on the locking in linear elasticity.
Indeed, the linear elasticity part of (\ref{PDE-poro}) can be formulated separately as
\begin{equation}
\displaystyle
    \displaystyle
    -\nabla \cdot \sigma = \V{f},
    \quad  \text{in } \Omega ,
\label{PDE-elas}
\end{equation}
subject to the Dirichlet boundary condition
\[
\V{u} = \V{u}_D, \quad \text{on } {\partial \Omega}.
\]
Using the identity
\[
\nabla \cdot (\nabla \V{u})^T = \nabla \cdot \Big((\nabla \cdot \V{u} )\V{I}\Big),
\]
and dividing the resulting equation by $(\lambda + \mu)$, we obtain
\begin{equation}
- \nabla \cdot \Big ( \frac{\mu}{\lambda+\mu} \nabla \V{u} + (\nabla \cdot \V{u}) \mathbf{I}
\Big ) = \frac{1}{\lambda + \mu} \V{f} .
\label{PDE-elas-2}
\end{equation}
From this we can see that the divergence part becomes dominant
and $\nabla \cdot \V{u} \to 0$ as $\lambda \to \infty$.
In other words, the solution approaches incompressible
and the system becomes nearly singular.

There has been extensive work on developing locking-free numerical methods for
both linear elasticity and poroelasticity. Locking-free discretization methods
for linear elasticity include
hybrid high-order methods \cite{DiPietro-2015},
virtual element methods \cite{Brezzi-2013},
hybridizable discontinuous Galerkin methods \cite{Cardenas-2024},
and enriched Galerkin methods \cite{Yi-Lee-Zikatanov-2022}
and those for linear poroelasticity include
mixed finite element methods \cite{AmbartKhatYotov_CMAME_2020,He-Jing-Feng-2025},
virtual element methods \cite{Burger4_AdvComputMath_2021,Coulet4_ComputGeosci_2020},
discontinuous Galerkin methods \cite{RIVIERE2017666},
enriched Galerkin methods \cite{KADEETHUM2021110030,LeeYi_JSC_2023},
and weak Galerkin (WG) finite element methods \cite{Wang2TavLiu_JCAM_2024}.
At each time step, these discretizations lead to large-scale linear systems
and such systems become nearly singular when $\lambda$ is large,
making them challenging to solve efficiently.
Effective preconditioning turns out to be a key to the efficient solution of those systems.
For example,
Lee et al. \cite{Lee-SISC-2017} introduced parameter-robust three-field block diagonal preconditioners
based on stability consideration and the operator preconditioning approach.
Adler and his coworkers  in \cite{Adler6_SISC_2020} proposed
norm-equivalent and field-of-value-equivalent block preconditioners
for the stabilized discretization of poroelasticity problems
in a three-field approach and
in \cite{Adler-2024} a parameter-free preconditioning for
nearly-incompressible linear elasticity. 
A general framework was proposed and several preconditioners for poroelasticity problems in two-
and three-field numerical schemes were discussed using the framework by Chen et al. \cite{ChenHongXuYang_CMAME_2020}.
Boon et al. \cite{Boon4_SISC_2021} constructed parameter-robust preconditioners for
four-field numerical schemes.
Fu and Kuang \cite{Fu-Kuang-2023} studied block-diagonal preconditioners
for divergence-conforming hybridizable discontinuous Galerkin methods
for generalized Stokes and linear elasticity equations.
More recently, Hong et al. \cite{Hong-2023-MathComp} proposed 
a framework for the stability analysis
and construction of norm-equivalent preconditioners for perturbed saddle point problems.
Rodrigo et al. \cite{Rodrigo6_SeMA_2024} presented preconditioners
for two- and three-field numerical schemes
and Luber and Sysala \cite{Luber-2024} investigated block diagonal preconditioners
for a three-field formulation of the Biot model of poroelasticity.
It is pointed out that most of the existing preconditioners are equivalent
to the underlying system in terms of spectrum, norm, and/or field of value.
While they are generally effective and parameter-robust, 
those preconditioners are singular or nearly singular
due to their spectral equivalence to the original system.
This makes them more challenging to construct and their inversion more expensive
to carry out than nonsingular preconditioners.

The objective of this work is to develop parameter-free inexact block Schur complement preconditioners
for the poroelasticity problem (\ref{PDE-poro}) and elasticity problem (\ref{PDE-elas-2}).
For spatial discretization, we use the lowest-order WG method
\cite{WANGYe2013103} that has been shown in \cite{Wang2TavLiu_JCAM_2024}
to be locking-free and achieve optimal-order convergence in pressure and displacement.
For simplicity, we use the implicit Euler scheme for the temporal discretization
of (\ref{PDE-poro}).
We first consider the linear elasticity problem (\ref{PDE-elas-2}).
The resulting algebraic system is nearly singular as $\lambda \to \infty$.
To enable its efficient iterative solution, we reformulate it into a two-by-two saddle
point system and then
add a rank-1 regularization term to its $(2,2)$ block (see the regularized system
(\ref{WGElas-Reg-1})). The regularization term arises naturally from an inherent identity
in the original system
and preserves the solution. More importantly, the regularized
system is non-singular and the eigenvalues of its Schur complement (preconditioned
by a simple approximation) stay bounded
below and above by positive constants (cf. Section~\ref{sec::reg}). We study inexact block diagonal
and triangular Schur complement preconditioners for the regularized system
and provide a convergence analysis for
the minimal residual method (MINRES) and the generalized minimal residual method (GMRES)
with these preconditioners. It is emphasized that the preconditioners
are simple and straightforward to implement. Moreover,
both MINRES and GMRES are shown to exhibit convergence
free of $h$ (the mesh size) and $\lambda$ (the locking parameter).

The efficient iterative solution of the two-field formulation of
the linear poroelasticity problem (\ref{PDE-poro})
builds on the efficient solution of the linear elasticity problem (\ref{PDE-elas-2}).
The leading block
of (\ref{PDE-poro}) corresponds to a linear elasticity problem
and the action of its inversion
can be carried out efficiently as for solving linear elasticity problems.
As a consequence, block Schur complement preconditioners
with the exact leading block can be used for solving (\ref{PDE-poro}).
The convergence of MINRES and GMRES with such preconditioners is analyzed
and shown to be independent of $h$ and $\lambda$.

We consider a three-field formulation of linear poroelasticity,
obtained by introducing a numerical pressure variable into
the linear elasticity part of the two-field formulation.
We show that the inherent regularization strategy can be extended to this system,
and that preconditioned MINRES and GMRES exhibit parameter-free convergence
for the resulting regularized problem.


It is useful to point out that the iterative solution of saddle point problems
has been studied extensively; see, e.g.,
\cite{BenziGolubLiesen-2005,Benzi2008,Boffi2008,Elman-2014,PestanaWathen-2015}
and references therein.
Moreover, we consider Dirichlet boundary conditions here for both (\ref{PDE-poro}) and
(\ref{PDE-elas}). If Neumann boundary conditions are used for a part or the whole of
the domain boundary, the resulting algebraic system is no longer nearly singular even
for large $\lambda$. In this case, the block Schur complement preconditioning
is known to work well for both linear poroelasticity and elasticity; see the above mentioned references
and \cite{SilvesterWathen_SINUM_1994,WathenSilvester_SINUM_1993}.

It is also worth mentioning that two recent works, \cite{Huang-Wang-2025-Stokes-reg} and 
\cite{HuangWang_2025}, are relevant to the current study. The inherent regularization strategy
was investigated in \cite{Huang-Wang-2025-Stokes-reg} for singular Stokes problems.
The algebraic systems considered in that work, which are generally singular and inconsistent,
differ from those studied here, which become nearly singular for large values of $\lambda$
in the context of linear poroelasticity. Moreover, applying the same regularization strategy
to linear poroelasticity is nontrivial. For linear poroelasticity, the regularization
term needs to be chosen carefully to preserve the solution of the original system.
Close attention is also needed for the choice of the regularization parameter to avoid
small eigenvalues of the preconditioned systems. 

Iterative solution with inexact block preconditioning for linear poroelasticity was
studied in \cite{HuangWang_2025}. The main difference between \cite{HuangWang_2025} and the present work lies in the application of regularization:
in \cite{HuangWang_2025}, preconditioning and iterative methods are applied and analyzed
without regularizing the system, whereas the system is regularized beforehand in the current
study. It was shown in \cite{HuangWang_2025} that, without regularization, MINRES and GMRES with
appropriate inexact block preconditioning converge with an essentially parameter-free rate. However, the asymptotic error constants still depend on the mesh size and the locking parameter,
due to a small eigenvalue of the preconditioned system as $\lambda \to \infty$. This parameter
dependence implies that MINRES and GMRES typically require several iterations to resolve
the small eigenvalue. In contrast, the current work demonstrates that a suitable regularization of the system improves the convergence of preconditioned MINRES and GMRES such that both the convergence factors and asymptotic error constants become independent of the mesh size and the locking parameter. In this sense, the present work can be viewed as an improvement over \cite{HuangWang_2025} (cf. comparison numerical results in Tables~\ref{Poro-3field-2D}
and \ref{Poro-3field-3D}).

The rest of paper is organized as follows.
In Section~\ref{SEC:formulation}, the discretization of the poroelasticity problem
(\ref{PDE-poro}) is described and its properties are discussed.
In Section~\ref{sec::reg}, the inherent regularization strategy is discussed
for the linear elasticity system (\ref{PDE-elas-2}) and the convergence of MINRES
and GMRES with inexact block Schur complement preconditioning is studied
for the regularized system.
Sections~\ref{sec:poro} and \ref{sec:poro3} are devoted, respectively,
to the study of the iterative solution of  the two-field and three-field formulations
of the linear poroelasticity problem (\ref{PDE-poro}),
using MINRES and GMRES with block Schur complement preconditioning.
Numerical results for both elasticity and poroelasticity problems in two and
three dimensions are presented in Section~\ref{SEC:numerical}. 
Conclusions are drawn in Section~\ref{SEC:conclusions}.

\section{Weak Galerkin discretization for poroelasticity}
\label{SEC:formulation}

In this section we describe the lowest-order WG discretization of the poroelasticity problem
\eqref{PDE-poro}.

The weak formulation of \eqref{PDE-poro} is 
to seek $(\V{u}(\cdot, t), p(\cdot, t)) \in (H^1(\Omega))^d\times H^1(\Omega)$, $0< t \le T$, such that
\begin{equation}
\begin{cases}
    \mu \Big( \nabla \V{u}, \nabla\V{v} \Big)
      + (\lambda+\mu) ({\nabla \cdot \V{u}}, {\nabla \cdot \V{v}})
      - \alpha (p, {\nabla \cdot \V{v}})
      = (\mathbf{f}, \V{v}),
      \quad \forall \V{v}\in (H_{0}^1(\Omega))^d,
\\
    \displaystyle
    -\alpha (\nabla \cdot \V{u}_t, q)
    - c_0 \left( p_t, q \right)
      - \left( {\kappa} \nabla p, \nabla q \right)
    \displaystyle
    =  -\left( s, q \right),
    \quad \forall q\in H_{0}^1(\Omega),
\end{cases}
  \label{poro_variationalform}
\end{equation}
where $(\cdot, \cdot)$ denotes the $L^2$ inner product over $\Omega$ and we have used the identity
\[ 
   2 \mu \Big ( \varepsilon (\V{u}), \varepsilon(\V{v}) \Big) + \lambda ({\nabla \cdot \V{u}}, {\nabla \cdot \V{v}})= 
    \mu \Big( \nabla \V{u}, \nabla\V{v} \Big)
      + (\lambda+\mu) ({\nabla \cdot \V{u}}, {\nabla \cdot \V{v}}),
\]
which holds when pure Dirichlet boundary conditions are used.
Assume that a quasi-uniform simplicial mesh $\mathcal{T}_h = \{K\}$  is given for $\Omega$,
where $h$ is the maximum element diameter.
Denote the boundary of element $K$ by $\partial K$.
Define the discrete weak function spaces as
\begin{align}
     \displaystyle
    \V{V}_h
     & = \{ \V{u}_h = \{ \V{u}_h^\circ, \V{u}_h^\partial \}: \;
      \V{u}_h^\circ|_{K} \in (P_0(K))^d, \;
      \V{u}_h^\partial|_e \in (P_0(e))^d, \;
      \forall K \in \mathcal{T}_h, \; e \in \partial K \},
    \\ 
    \displaystyle
    W_h  & = \{p_h=\{p^\circ_h, p^\partial_h\}:
    p^\circ_h|_{K} \in P_0(K),
    p^\partial_h|_e \in P_0(e), 
    \; \forall K \in \mathcal{T}_h,  e \in \partial K \},
    \\
    \displaystyle
     \mathcal{P}_0 &= \Big\{p_h=\{p^\circ_h\}\; :\; p^\circ_h|_K\in P_0(K),\; \forall K\in\mathcal{T}_h\Big\} ,
\end{align}
where $P_0(K)$ and $P_0(e)$ denote the spaces of constant polynomials defined on element $K$ and facet $e$, respectively.
Notice that functions in $\V{V}_h$ and $W_h$ consist of two parts, one defined in the interiors of the mesh elements
and the other on their facets.
Let $\V{V}_h^0$ and $W_h^0$ be the subspaces of  $\V{V}_h$ and $W_h$ with vanishing
Dirichlet boundary conditions for $\V{u}_h$ and $p_h$, respectively.
Define the discrete weak gradient operator $\nabla_w: W_h \rightarrow RT_0(\mathcal{T}_h)$  for $u_h = (u_h^{\circ},u_h^{\partial})$ as
\begin{equation}
\label{weak-grad-1}
  (\nabla_w u_h, \mathbf{w})_K
  = (u^\partial_h, \mathbf{w} \cdot \mathbf{n})_{\partial K}
  - ( u^\circ_h , \nabla \cdot \mathbf{w})_K,
  \quad \forall \mathbf{w} \in RT_0(K),\quad \forall K \in \mathcal{T}_h ,
\end{equation}
where 
$\mathbf{n}$ is the unit outward normal to $\partial K$,
$(\cdot, \cdot)_K$ and $( \cdot, \cdot )_{\partial K}$ are the $L^2$ inner product on $K$ and $\partial K$, respectively,
and $RT_0(K)$ is the lowest-order Raviart-Thomas space on element $K$ defined as
\[
RT_0(K) = (P_0(K))^d + \mathbf{x} \, P_0(K).
\]
The global Raviart-Thomas space over the whole mesh $\mathcal{T}_h$ is defined as
\begin{align*}
    & RT_0(\mathcal{T}_h) = \{ \V{u}_h: \; \V{u}_h|_K \in RT_0(K), \; \forall K \in \mathcal{T}_h \}.
\end{align*}
The analytical expression of $\nabla_w u_h$ can be obtained; see, e.g., \cite{HuangWang_CiCP_2015}.
For a vector-valued function $\V{u}_h$, $\nabla_w \V{u}_h$ is a matrix,
with each row corresponding to the weak gradient of a component.
The discrete weak divergence operator $\nabla_w \cdot: \V{v}_h \to \mathcal{P}_0(\mathcal{T}_h)$ is defined
separately as 
\begin{equation}
   (\nabla_w \cdot \V{u}_h, w )_{K}
  = ( \V{u}_h^\partial , w \mathbf{n})_{ e }
  - ( \V{u}_h^\circ , \nabla w)_{K},
  \quad
  \forall w \in P_0(K) .
  \label{wk_div1}
\end{equation}
Notice that $(\nabla_w \cdot \V{u}_h)|_K \in P_0(K)$. 
 
For temporal discretization we consider a time partition of the interval $(0,T]$ given by  $0 = t_0 < t_1< ... <t_N = T$, and denote the time step as $\Delta t_n = t_n - t_{n-1}$. Using the implicit Euler scheme for temporal discretization and
the lowest-order WG for spatial discretization,
we obtain the time marching scheme for \eqref{poro_variationalform} as: seek $\V{u}_h^n \in \V{V}_h$ and $p_h^n \in W_h$ 
such that 
\begin{equation}
\begin{cases}
\displaystyle
\mu \sum_{K \in\mathcal{T}_h} (\nabla_w\V{u}_h^n,\nabla_w\V{v}_h)_K
+(\lambda+\mu) \sum_{K \in\mathcal{T}_h}(\nabla_w\cdot\V{u}_h^n,\nabla_w\cdot\V{v}_h)_K
\\
\displaystyle
\qquad \qquad \qquad 
- \alpha \sum_{K \in\mathcal{T}_h} (p_h^{\circ,n}, \nabla_{w}\cdot\V{v}_h)_K
= \sum_{K \in\mathcal{T}_h}(\mathbf{f}^n,\V{v}_h^{\circ})_K, \quad \forall \V{v}_h \in \V{V}_h^0,
\\
\displaystyle
-\alpha \sum_{K \in\mathcal{T}_h}  (\nabla_{w}\cdot\V{u}_h^n,q_h^{\circ})_K
- c_0 \sum_{K \in\mathcal{T}_h} (p_h^{\circ,n}, q_h^{\circ})_K
- \Delta t_n \sum_{K \in\mathcal{T}_h}(\kappa\nabla_w p_h^n,\nabla_wq_h)_K
\\
\displaystyle
\qquad \qquad \qquad 
= - \Delta t_n \sum_{K \in\mathcal{T}_h} (s^n,q_h^{\circ})_K
    -\alpha \sum_{K \in\mathcal{T}_h}  (\nabla_{w}\cdot\V{u}_h^{n-1},q_h^{\circ})_K
\\
\displaystyle
\qquad \qquad \qquad \qquad \qquad \qquad \qquad
- c_0 \sum_{K \in\mathcal{T}_h} (p_h^{\circ,n-1}, q_h^{\circ})_K
    ,\quad \forall q_h \in {W}_h^0 .
\end{cases}
\label{EqnFullDisc1}
\end{equation}
Suppressing the subscript and superscript $n$,
we can rewrite the above system in a matrix-vector form  as 
\begin{equation}
    \begin{bmatrix}
        \mu A_1  + ( \lambda + \mu ) A_0 & -\alpha B^T \\
       - \alpha B & -D
    \end{bmatrix}
    \begin{bmatrix}
        \V{u}_h \\
        \mathbf{p}_h
    \end{bmatrix}
    =
    \begin{bmatrix}
        \mathbf{b}_1 \\
        \mathbf{b}_2
    \end{bmatrix},
    \label{2by2Scheme_matrix}
\end{equation}
where the matrices and right hand sides are given by
\begin{align}
& \V{v}_h^T A_0 \V{u}_h = \sum_{K \in\mathcal{T}_h} (\nabla_w\cdot\V{u}_h,\nabla_w\cdot\V{v}_h)_K,
\quad \forall \V{u}_h, \V{v}_h \in \V{V}_h^0,
\label{A0-1}
\\
& \V{v}_h^T A_1 \V{u}_h = \sum_{K \in\mathcal{T}_h} (\nabla_w\V{u}_h,\nabla_w\V{v}_h)_K,
\quad \forall \V{u}_h, \V{v}_h \in \V{V}_h^0,
\label{A1-1}
\\
& \mathbf{q}_h^T B \V{u}_h =  \sum_{K \in \mathcal{T}_h}  (\nabla_{w}\cdot\V{u}_h,q_h^{\circ})_K,
\quad \forall \V{u}_h \in \V{V}_h^0, \quad \forall q_h \in W_h^0,
\label{B-1}
\\
& \mathbf{q}_h^T D \mathbf{p}_h = c_0 \sum_{K \in \mathcal{T}_h} (p_h^{\circ}, q_h^{\circ})_K 
+ \Delta t \sum_{K \in\mathcal{T}_h}(\kappa\nabla_w p_h,\nabla_wq_h)_K,
\quad \forall p_h, q_h \in W_h^0 ,
\label{D-1}
\\
& \V{v}_h^T \V{b}_1 = \sum_{K \in\mathcal{T}_h} (\V{f}, \V{v}_h^{\circ})_K, \quad \forall \V{v}_h \in \V{V}_h^0,
\label{b1-1}
\\
& \V{q}_h^T \V{b}_2 = - \Delta t \sum_{K \in\mathcal{T}_h} (s,q_h^{\circ})_K
    -\alpha \sum_{K \in\mathcal{T}_h}  (\nabla_{w}\cdot\V{u}_h,q_h^{\circ})_K
    - c_0  \sum_{K \in\mathcal{T}_h} (p_h^{\circ}, q_h^{\circ})_K, \quad \forall q_h \in W_h^0.
\label{b2-1}
\end{align}
Here, the same notation (for example, $\V{u}_h$)
is used for a discrete function and the vector formed by its degrees of freedom
(with the degrees of freedom in the element interiors first and followed by those
on element facets).
Notice that $\V{b}_1$ and $\V{b}_2$ are vectors whose first block is nonzero
and has a size of $d N$ and $N$, respectively, while the second block is zero,
where $N$ is the number of elements of $\mathcal{T}_h$.
That is, they have the structure
\begin{align*}
    \V{b}_1 = 
    \begin{bmatrix}
        \V{b}_1^{\circ}
        \\
        0
    \end{bmatrix},\quad 
    \V{b}_2 = 
    \begin{bmatrix}
        \V{b}_2^{\circ}
        \\
        0
    \end{bmatrix}.
\end{align*}

The numerical scheme \eqref{2by2Scheme_matrix} is known to be locking-free and achieve optimal-order convergence for both the displacement $\V{u}$ and the pressure $p$ (see \cite{Wang2TavLiu_JCAM_2024}). 
The {inf-sup} condition and optimal-order convergence are stated in the following lemmas.

\begin{lem}\label{inf_sup}
There exists a constant $ 0<\beta<1 $ independent of $h$ such that 
\begin{align}
   \sup_{\V{v}_h\in \V{V}_h^0, \quad \V{v}_h^T A_1 \V{v}_h \neq 0}
  \frac{\V{p}_h^T B \V{v}_h}
      {(\V{v}_h^T A_1 \V{v}_h)^{\frac{1}{2}}} 
    \geq \beta \|\V{p}_h\|, 
  \qquad \forall p_h \in W_h.
\end{align}
\end{lem}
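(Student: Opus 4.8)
The plan is to prove this discrete inf--sup condition by the classical Fortin-operator argument, reducing it to the continuous inf--sup condition (surjectivity of the divergence) on $(H_0^1(\Omega))^d$. Observe first that $\V{v}_h^T A_1 \V{v}_h = \sum_K \|\nabla_w \V{v}_h\|_{0,K}^2$ is the squared WG $H^1$-seminorm, while $\V{p}_h^T B \V{v}_h = \sum_K (\nabla_w\cdot\V{v}_h, p_h^\circ)_K$ involves only the interior pressure $p_h^\circ$ through its elementwise constant values. Since $\nabla_w\cdot\V{v}_h$ is elementwise constant with $\int_K \nabla_w\cdot\V{v}_h = \int_{\partial K} \V{v}_h^\partial\cdot\V{n}$, single-valuedness of the facet traces together with $\V{v}_h^\partial = 0$ on $\partial\Omega$ gives $\int_\Omega \nabla_w\cdot\V{v}_h = 0$ for every $\V{v}_h \in \V{V}_h^0$. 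Hence the form $\V{p}_h^T B \V{v}_h$ depends on $p_h^\circ$ only through its mean-zero part, so $\|\V{p}_h\|$ is to be read as $\|p_h^\circ - \bar p_h\|_{L^2(\Omega)}$ (equivalently, the quotient norm on $\mathcal{P}_0/\mathbb{R}$), and it suffices to prove the bound for $p_h$ with $\int_\Omega p_h^\circ = 0$, taking $\|\V{p}_h\| = \|p_h^\circ\|_{L^2(\Omega)}$.

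Next I would invoke the continuous inf--sup condition: on a bounded Lipschitz domain with pure Dirichlet conditions, $\nabla\cdot:(H_0^1(\Omega))^d \to L_0^2(\Omega)$ is surjective with a bounded right inverse (Ne\v{c}as, Bogovskii), so there is $\V{v} \in (H_0^1(\Omega))^d$ with $\nabla\cdot\V{v} = p_h^\circ$ and $|\V{v}|_{H^1(\Omega)} \le C_\Omega \|p_h^\circ\|_{L^2(\Omega)}$, where $C_\Omega$ depends only on $\Omega$. Then I would introduce the Fortin interpolant $\Pi_h\V{v} \in \V{V}_h^0$ obtained by $L^2$-averaging the degrees of freedom, $(\Pi_h\V{v})^\circ|_K = \frac1{|K|}\int_K \V{v}$ and $(\Pi_h\V{v})^\partial|_e = \frac1{|e|}\int_e \V{v}$ (which vanishes on $\partial\Omega$ since $\V{v}\in(H_0^1)^d$), and establish two properties. \emph{Commuting property:} testing the definition of $\nabla_w\cdot$ against constants and using that $\V{w}\cdot\V{n}$ is constant on each facet for $\V{w}\in RT_0(K)$ gives $(\nabla_w\cdot\Pi_h\V{v})|_K = \frac1{|K|}\int_K \nabla\cdot\V{v}$; hence, since $p_h^\circ$ is elementwise constant and $\nabla\cdot\V{v} = p_h^\circ$,
\[
\V{p}_h^T B(\Pi_h\V{v}) = \sum_K (\nabla\cdot\V{v}, p_h^\circ)_K = (p_h^\circ, p_h^\circ) = \|p_h^\circ\|^2 .
\]
\emph{Stability:} componentwise, for a scalar $v \in H^1(K)$ with averaged DOFs $\pi_h v$, the facts that $\V{w}\cdot\V{n}$ is constant on each facet and $\nabla\cdot\V{w}$ is constant on $K$ for $\V{w}\in RT_0(K)$, together with Green's formula, yield $(\nabla_w \pi_h v, \V{w})_K = (\nabla v, \V{w})_K$ for all $\V{w}\in RT_0(K)$; thus $\nabla_w\pi_h v$ is precisely the $L^2(K)$-projection of $\nabla v$ onto $RT_0(K)$, so $\|\nabla_w\Pi_h\V{v}\|_{L^2(\Omega)} \le |\V{v}|_{H^1(\Omega)} \le C_\Omega\|p_h^\circ\|$.

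Combining these with $\V{v}_h = \Pi_h\V{v}$ — which satisfies $\V{v}_h^T A_1\V{v}_h \neq 0$ whenever $p_h^\circ \neq 0$, since $\nabla_w\V{v}_h = 0$ would force (by testing against constant fields in $RT_0(K)$) $\nabla_w\cdot\V{v}_h = 0$, contradicting $\nabla_w\cdot\V{v}_h = p_h^\circ$ — I obtain
\[
\frac{\V{p}_h^T B \V{v}_h}{(\V{v}_h^T A_1 \V{v}_h)^{1/2}} \ge \frac{\|p_h^\circ\|^2}{C_\Omega\|p_h^\circ\|} = \frac1{C_\Omega}\|p_h^\circ\| ,
\]
so the claim holds with $\beta = \min\{C_\Omega^{-1}, \tfrac{1}{2}\}$, the upper bound $\beta < 1$ being merely a normalization since $\beta$ may always be decreased. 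The genuinely delicate points are the reduction to mean-zero pressures and the two $RT_0$ identities underlying the commuting and stability properties of $\Pi_h$; once these are in hand the rest is immediate, with stability constant exactly $1$. I expect the $h$-independence of $\beta$ to be the conceptual crux, and it is built in: the averaging operator $\Pi_h$ requires no inverse estimates, so no negative powers of $h$ are ever incurred.
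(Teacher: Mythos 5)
Your proof is correct, and since the paper itself only cites \cite[Lemma~1]{Wang2TavLiu_JCAM_2024} rather than proving the lemma, your Fortin-operator argument is a genuine (and standard) self-contained substitute for that reference. All the key steps check out: the identity $\int_K \nabla_w\cdot\V{v}_h = \int_{\partial K}\V{v}_h^{\partial}\cdot\V{n}$ from \eqref{wk_div1} with $w=1$, the commuting property of the DOF-averaging interpolant (using that $\V{w}\cdot\V{n}$ is facet-wise constant and $\nabla\cdot\V{w}$ is element-wise constant for $\V{w}\in RT_0(K)$ on simplices), the fact that $\nabla_w\pi_h v$ is the $L^2(K)$-projection of $\nabla v$ onto $RT_0(K)$ (hence stability with constant $1$, no inverse estimates), and the Bogovskii/Ne\v{c}as right inverse of the divergence supplying the only $h$-independent constant $C_\Omega$. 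Your nondegeneracy argument ($\nabla_w\V{v}_h=0$ forces $\nabla_w\cdot\V{v}_h=0$ by testing against constant fields) is also sound, and is consistent with the inequality $A_0\le d\,A_1$ used later in Lemma~\ref{lem:S}. One remark: your preliminary reduction is not optional pedantry but necessary — as literally written, with $B$ having the zero facet block \eqref{B-2} and $\text{Null}((B^{\circ})^T)$ containing constants (Lemma~\ref{lem:B0-1}), the inequality cannot hold for all $p_h\in W_h$ with $\|\V{p}_h\|$ the full norm; your reading of $\|\V{p}_h\|$ as the $L^2$ norm of the mean-zero interior part is the interpretation under which the lemma is true and under which it is actually used in the sequel, so flagging it explicitly is the right call.
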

\begin{proof}
The proof can be found in \cite[Lemma 1]{Wang2TavLiu_JCAM_2024}.
\end{proof}

\begin{lem}
\label{pro:poro_conv}
Let $ (\V{u}, p) $ and $(\V{u}_h,p_h) $ be the exact and numerical solutions of poroelasticity problem (\ref{PDE-poro}).
Under suitable regularity assumptions for the exact solution,
there holds 
\begin{align}
  \displaystyle
  \max_{1\leq n\leq N} \left\{\|\nabla \V{u}^n - \nabla_w \V{u}_h^n \|
   + \| p^n-p_h^n \| \right\}
  \displaystyle
  \leq C_1 h + C_2 \Delta t,
  \label{pro:poro_conv-2}
\end{align}
where $C_1$ and $C_2$ depend on $\V{u}$ and $p$ but not on $h$, $\Delta t$, and $\lambda$.
\end{lem}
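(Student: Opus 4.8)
The plan is to carry out the classical error analysis for finite element discretizations of evolution problems, adapted to the weak Galerkin setting, while tracking every constant to verify it is free of $h$, $\Delta t$, and $\lambda$. First I would introduce two projections onto the discrete spaces: a $\kappa$-weighted elliptic (Ritz) projection $R_h^p p^n \in W_h$ of the pressure, defined so that $\kappa \sum_K (\nabla_w R_h^p p^n, \nabla_w q_h)_K$ reproduces the diffusion term tested against $q_h \in W_h^0$; and a Stokes/Ritz-type projection $R_h^{\V{u}} \V{u}^n \in \V{V}_h$ of the displacement, defined by the discrete near-incompressible elasticity form $a_h(\V{w},\V{v}) := \mu \sum_K (\nabla_w \V{w}, \nabla_w \V{v})_K + (\lambda+\mu)\sum_K (\nabla_w \cdot \V{w}, \nabla_w \cdot \V{v})_K$ with the pressure coupling $\alpha((R_h^p p^n)^\circ, \nabla_w \cdot \V{v}_h)$ built into its right-hand side, so that the elasticity error equation below carries no projection consistency terms. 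I then split $\V{u}^n - \V{u}_h^n = \boldsymbol{\rho}_u^n + \boldsymbol{\theta}_u^n$ with $\boldsymbol{\rho}_u^n := \V{u}^n - R_h^{\V{u}} \V{u}^n$, $\boldsymbol{\theta}_u^n := R_h^{\V{u}} \V{u}^n - \V{u}_h^n \in \V{V}_h^0$, and likewise $p^n - p_h^n = \rho_p^n + \theta_p^n$. By the triangle inequality it suffices to bound the projection errors and the discrete errors $\|\nabla_w \boldsymbol{\theta}_u^n\|$, $\|\theta_p^n\|$.

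The projection errors are $O(h)$ with constants independent of $h$, $\Delta t$, $\lambda$: for $\rho_p^n$ this is the standard elliptic-projection estimate; for $\boldsymbol{\rho}_u^n$ it is precisely a \emph{locking-free} a priori estimate for the WG discretization of near-incompressible elasticity, which follows from the inf--sup condition of Lemma~\ref{inf_sup} together with the $\lambda$-uniform regularity of $\V{u}^n$ and of the solid-pressure-type quantity $(\lambda+\mu)\nabla \cdot \V{u}^n - \alpha p^n$. These $\lambda$-uniform bounds, together with uniform bounds on $\partial_t \V{u}$, $\partial_{tt} \V{u}$, $\partial_{tt} p$, are exactly the content of the ``suitable regularity assumptions,'' and they hold because the limiting incompressible Biot problem is well posed. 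Since the projections are linear and commute with time differencing, one also gets $\|\boldsymbol{\rho}_u^n - \boldsymbol{\rho}_u^{n-1}\| \le C h \Delta t_n \|\partial_t \V{u}\|_{L^\infty(t_{n-1},t_n;H^2)}$ and similarly for $\rho_p$, needed below.

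Next I would derive the error equations. Subtracting the scheme \eqref{EqnFullDisc1} from the weak formulation at $t = t_n$ and using the defining properties of the projections, the elasticity error equation collapses to $a_h(\boldsymbol{\theta}_u^n, \V{v}_h) - \alpha(\theta_p^{\circ,n}, \nabla_w \cdot \V{v}_h) = 0$ for all $\V{v}_h \in \V{V}_h^0$, with no $O(h)$ data on the right, while the flow error equation reads $\alpha(\nabla_w \cdot (\boldsymbol{\theta}_u^n - \boldsymbol{\theta}_u^{n-1}), q_h^\circ) + c_0(\theta_p^{\circ,n} - \theta_p^{\circ,n-1}, q_h^\circ) + \Delta t_n \kappa \sum_K(\nabla_w \theta_p^n, \nabla_w q_h)_K = \mathcal{R}^n(q_h)$, where $\mathcal{R}^n$ gathers the time increments of the projection errors ($O(h\Delta t_n)$) and the implicit-Euler truncation $\partial_t(\alpha \nabla \cdot \V{u} + c_0 p)(t_n) - \Delta t_n^{-1}[(\alpha\nabla\cdot\V{u}+c_0p)(t_n) - (\alpha\nabla\cdot\V{u}+c_0p)(t_{n-1})]$ ($O(\Delta t_n)$ pointwise). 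I would then run the combined energy estimate: test the elasticity error equation with the increment $\V{v}_h = \boldsymbol{\theta}_u^n - \boldsymbol{\theta}_u^{n-1}$ and the flow error equation with $q_h = \theta_p^n$, add, and observe that the coupling terms $\mp\alpha(\nabla_w \cdot (\boldsymbol{\theta}_u^n - \boldsymbol{\theta}_u^{n-1}), \theta_p^{\circ,n})$ cancel. Applying $a\cdot(a-b) = \tfrac12(|a|^2 - |b|^2 + |a-b|^2)$ to the $a_h$- and $c_0$-mass terms, telescoping over $n$, bounding $\mathcal{R}^n(\theta_p^n)$ by Cauchy--Schwarz and Young (absorbing a small multiple of $\max_n \|\theta_p^{\circ,n}\|^2$ using $c_0 > 0$, and the lower-order cross terms by a discrete Grönwall inequality), and discarding the nonnegative $(\lambda+\mu)\|\nabla_w\cdot\boldsymbol{\theta}_u^n\|^2$ contribution, I obtain $\max_n \mu\|\nabla_w \boldsymbol{\theta}_u^n\|^2 + \max_n c_0 \|\theta_p^{\circ,n}\|^2 + \kappa \sum_n \Delta t_n \|\nabla_w \theta_p^n\|^2 \le C(h^2 + \Delta t^2)$ with $C$ independent of $h$, $\Delta t$, $\lambda$ (using $\boldsymbol{\theta}_u^0, \theta_p^0 = O(h)$ from projecting the initial data). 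Combining with the $O(h)$ projection estimates yields \eqref{pro:poro_conv-2}.

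The step I expect to be the main obstacle is establishing and then exploiting the $\lambda$-robust properties of the projections, rather than the energy estimate itself: specifically, (a) proving the locking-free $O(h)$ a priori bound for $\boldsymbol{\rho}_u^n$ uniformly in $\lambda$, which is the heart of the matter and is where Lemma~\ref{inf_sup} and the $\lambda$-uniform regularity of the solid pressure enter; (b) arranging the discrete elasticity/Ritz projection so that the coupling term matches exactly and the elasticity error residual genuinely vanishes; and (c) verifying that the $\lambda$-uniform regularity hypotheses on $\V{u}$, $p$, and their time derivatives are available — the point where one relies on the well-posedness and elliptic regularity theory for the limiting incompressible Biot system.
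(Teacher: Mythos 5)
Your proposal is sound and follows the standard projection-plus-energy-estimate route; the paper itself does not prove this lemma but simply defers to \cite[Theorems 1 and 3]{Wang2TavLiu_JCAM_2024}, and that reference's argument is essentially the one you outline (elliptic/Ritz projections, cancellation of the coupling terms in the combined energy estimate, discrete Gr\"onwall, with the inf-sup condition of Lemma~\ref{inf_sup} delivering the $\lambda$-uniform bound on the displacement projection error). You have correctly identified the locking-free projection estimate and the $\lambda$-uniform regularity of the solid pressure as the crux of the matter.
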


\begin{proof}
    The proof can be found in \cite[Theorem 1, Theorem 3]{Wang2TavLiu_JCAM_2024}.
\end{proof}

In this work, we focus on constructing effective and parameter-free inexact block preconditioners
for the iterative solution of poroelasticity problems.
To this end, we first analyze the structures of the blocks in the algebraic system \eqref{2by2Scheme_matrix}. 
The block $A_1$ is the stiffness matrix of the Laplacian operator for the displacement and
is symmetric and positive definite (SPD). 
The block $B$ has the structure 
\begin{equation}
B = \begin{bmatrix} B^{\circ} \\ 0 \end{bmatrix},
\label{B-2}
\end{equation}
where 
$\mathbf{q}_h^T B^{\circ} \V{u}_h =  \sum_{K \in \mathcal{T}_h}  (\nabla_{w}\cdot\V{u}_h,q_h^{\circ})_K$ and
the row number of the zero block is equal to the number of the degrees of freedom of $q_h^{\partial}$.
Moreover, $(B^{\circ})^T$ is a rank-1 deficient matrix as shown in the following lemma.

\begin{lem}
\label{lem:B0-1}
The null space of $(B^{\circ})^T$ is given by
\[
\text{Null}((B^{\circ})^T) = \{ p_h \in W_h: \; p_{h,K} = C, \; \forall K \in \mathcal{T}_h, \; \text{ C is a constant} \} .
\]
\end{lem}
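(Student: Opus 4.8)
The plan is to characterize the null space of $(B^{\circ})^T$ directly from the definition of $B^{\circ}$ in terms of the discrete weak divergence. Recall that for $\V{u}_h \in \V{V}_h^0$ and $q_h \in W_h^0$ we have $\mathbf{q}_h^T B^{\circ} \V{u}_h = \sum_{K \in \mathcal{T}_h} (\nabla_w \cdot \V{u}_h, q_h^{\circ})_K$. Since each $(\nabla_w \cdot \V{u}_h)|_K$ and each $q_h^{\circ}|_K$ is a constant on $K$, this pairing reduces to $\sum_K |K| (\nabla_w \cdot \V{u}_h)|_K \, q_h^{\circ}|_K$, so $B^{\circ}$ acts only on the interior pressure degrees of freedom and is (up to a diagonal mass scaling) the matrix of the weak-divergence map from $\V{V}_h^0$ into $\mathcal{P}_0$. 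Thus a pressure vector $\V{p}_h$ lies in $\mathrm{Null}((B^{\circ})^T)$ if and only if $\sum_K |K|\, p_{h,K}^{\circ} (\nabla_w \cdot \V{v}_h)|_K = 0$ for all $\V{v}_h \in \V{V}_h^0$; equivalently $(p_h^{\circ}, \nabla_w \cdot \V{v}_h) = 0$ for all $\V{v}_h \in \V{V}_h^0$.

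First I would show the inclusion $\supseteq$: if $p_{h,K} = C$ for all $K$, then using the definition \eqref{wk_div1} of $\nabla_w \cdot$ together with summation over all elements, $(C, \nabla_w \cdot \V{v}_h) = C \sum_K \big( (\V{v}_h^{\partial}, \mathbf{n})_{\partial K} - (\V{v}_h^{\circ}, \nabla 1)_K \big) = C \sum_K (\V{v}_h^{\partial}, \mathbf{n})_{\partial K}$. The interior-gradient term vanishes because $\nabla 1 = 0$, and the boundary sum telescopes: on each interior facet shared by two elements the two outward normals are opposite and $\V{v}_h^{\partial}$ is single-valued, so the contributions cancel, while on boundary facets $\V{v}_h^{\partial} = 0$ since $\V{v}_h \in \V{V}_h^0$. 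Hence $(C, \nabla_w \cdot \V{v}_h) = 0$ for all $\V{v}_h$, which gives $\V{p}_h \in \mathrm{Null}((B^{\circ})^T)$.

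For the reverse inclusion $\subseteq$, I would invoke the surjectivity of the weak divergence onto mean-value-zero (or, here, essentially all of $\mathcal{P}_0$ modulo constants) guaranteed by the inf-sup condition of Lemma~\ref{inf_sup}. Concretely: suppose $(p_h^{\circ}, \nabla_w \cdot \V{v}_h) = 0$ for all $\V{v}_h \in \V{V}_h^0$. The inf-sup estimate $\sup_{\V{v}_h} \frac{\V{p}_h^T B \V{v}_h}{(\V{v}_h^T A_1 \V{v}_h)^{1/2}} \ge \beta \|\V{p}_h\|$ holds for $p_h$ ranging over $W_h$, but $B$ pairs only with the interior components of $p_h$; so applying it to the decomposition $p_h = p_h^{\circ} + p_h^{\partial}$ and using that the left side depends only on $p_h^{\circ}$, one sees the inf-sup cannot control the full norm of an arbitrary $p_h \in W_h$ unless one restricts attention to the interior part and to the complement of constants. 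The clean way is: decompose $p_h^{\circ} = \bar{p} + \tilde{p}$ where $\bar p$ is the ($|K|$-weighted) mean and $\tilde p$ has mean zero; the pairing with $\nabla_w \cdot \V{v}_h$ annihilates $\bar p$ by the $\supseteq$ computation already done, so the hypothesis becomes $(\tilde p, \nabla_w \cdot \V{v}_h) = 0$ for all $\V{v}_h$; then the inf-sup condition forces $\tilde p = 0$, i.e. $p_h^{\circ}$ is constant. This identifies $\mathrm{Null}((B^{\circ})^T)$ as the set of pressure vectors whose element-interior value is a single global constant, which is the asserted description (the facet components $p_h^{\partial}$ being irrelevant to $B^{\circ}$).

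The main obstacle I anticipate is the careful bookkeeping in the reverse inclusion: the inf-sup condition in Lemma~\ref{inf_sup} is stated for $p_h \in W_h$ (which has both interior and facet parts), while $(B^{\circ})^T$ only sees the interior part, so one must argue that the $\|p_h\|$ on the right refers to (or can be reduced to) the interior pressure norm on the mean-zero subspace — otherwise the inf-sup as literally stated would be false for nonzero $p_h^{\partial}$. Resolving this cleanly requires either a precise reading of the norm $\|p_h\|$ used in Lemma~\ref{inf_sup} (likely the $L^2$ norm of the interior part, or of the piecewise-constant function it represents) or an explicit reduction to the mean-zero interior subspace as sketched above. Everything else — the telescoping boundary sum and the $\nabla 1 = 0$ cancellation — is routine.
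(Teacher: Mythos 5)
Your forward inclusion ($\supseteq$) is correct and is the standard telescoping argument: constants pass through $\nabla_w\cdot$ to a boundary sum that cancels across interior facets and vanishes on $\partial\Omega$. The issue is with the reverse inclusion.

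Invoking Lemma~\ref{inf_sup} for the reverse direction is both heavier than necessary and, as you yourself noticed, problematic: as literally written, Lemma~\ref{inf_sup} is inconsistent with the very statement being proved. Take $p_h$ with $p_{h,K}^{\circ}\equiv 1$ and $p_h^{\partial}\equiv 0$; by Lemma~\ref{lem:B0-1} (once proved) the numerator $\V{p}_h^T B\V{v}_h$ vanishes for every $\V{v}_h$, yet $\|\V{p}_h\|>0$, so the displayed inequality cannot hold unless $\|\cdot\|$ is implicitly a quotient/seminorm modulo constants. You flagged this ambiguity, but rather than resolving it you wrote an argument that depends on it being resolved favorably. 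There is also a latent circularity risk: in many WG references the inf-sup for $B$ is proved \emph{after}, and sometimes using, the identification of $\mathrm{Null}(B^T)$, so leaning on Lemma~\ref{inf_sup} here may not be logically safe.

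What you are missing is that the same telescoping computation you already did for $\supseteq$ gives the reverse inclusion directly, with no inf-sup at all. From \eqref{wk_div1} with $w=p_{h,K}^{\circ}$ (a constant, so $\nabla w=0$), the hypothesis $(p_h^{\circ},\nabla_w\cdot\V{v}_h)=0$ for all $\V{v}_h\in\V{V}_h^0$ becomes
\[
\sum_{K}\;(\V{v}_h^{\partial},\,p_{h,K}^{\circ}\,\mathbf{n})_{\partial K}
\;=\;\sum_{e\ \mathrm{interior}}\;\big(\V{v}_h^{\partial}|_e,\;(p_{h,K_1(e)}^{\circ}-p_{h,K_2(e)}^{\circ})\,\mathbf{n}_e\big)_{e}\;=\;0,
\]
after pairing opposite normals on interior facets and using $\V{v}_h^{\partial}=0$ on $\partial\Omega$. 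Since the facet values $\V{v}_h^{\partial}|_e\in(P_0(e))^d$ on interior facets can be chosen independently and arbitrarily, pick $\V{v}_h^{\partial}|_e=(p_{h,K_1(e)}^{\circ}-p_{h,K_2(e)}^{\circ})\,\mathbf{n}_e$ and $\V{v}_h^{\circ}=0$; the sum becomes $\sum_e |e|\,(p_{h,K_1(e)}^{\circ}-p_{h,K_2(e)}^{\circ})^2=0$, forcing equality across every interior facet. Connectivity of the mesh of the domain $\Omega$ then makes all $p_{h,K}^{\circ}$ equal. This is self-contained, avoids the inf-sup ambiguity entirely, and is almost certainly the argument in the cited reference. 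I would replace your $\subseteq$ paragraph with this.
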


\begin{proof}
The proof can be found in \cite[Lemma 2.1]{HuangWang_CiCP_2025}.
\end{proof}

The block $D$ in \eqref{D-1} has the structure as 
\begin{equation}
D = c_0 \begin{bmatrix}
    M_p^{\circ} & 0 \\[0.1in]
    0 & 0
\end{bmatrix}
+ \kappa \Delta t  A_p .
\label{D-2}
\end{equation}
Here, $A_p$ is the stiffness matrix of the Laplacian operator for the pressure and
$M_p^{\circ} $ is the diagonal mass matrix given by
\begin{align}
    \label{mass-1}
    M_p^{\circ} = \text{diag}(|K_1|, ..., |K_N|),
\end{align}
where $K_j$ ($j = 1, ..., N$) denote the elements of $\mathcal{T}_h$ and $|K_j|$ denotes the volume of $K_j$.
From \eqref{A0-1} and \eqref{B-1}, it is not difficult to see that $A_0$ can be expressed as
\begin{equation}
    A_0 = (B^{\circ})^T (M_p^{\circ})^{-1} B^{\circ} .
\label{A0-2}
\end{equation}
Combining this and Lemma~\ref{lem:B0-1}, we know that $A_0$ is singular. 
Moreover, we can rewrite \eqref{2by2Scheme_matrix} as
\begin{equation}
    \begin{bmatrix}
        \epsilon A_1  +  A_0 & -\frac{\alpha \epsilon}{\mu } B^T \\[0.05in]
        -\frac{\alpha \epsilon}{\mu} B & -\frac{\epsilon}{\mu }D
    \end{bmatrix}
    \begin{bmatrix}
        \V{u}_h \\
        \mathbf{p}_h
    \end{bmatrix}
    = \frac{\epsilon}{\mu}
    \begin{bmatrix}
        \mathbf{b}_1 \\
        \mathbf{b}_2
    \end{bmatrix},
    \qquad \mathcal{A} = \begin{bmatrix}
        \epsilon A_1  +  A_0 & -\frac{\alpha \epsilon}{\mu } B^T \\[0.05in]
        -\frac{\alpha \epsilon}{\mu} B & -\frac{\epsilon}{\mu }D
    \end{bmatrix}, 
    \label{2by2Scheme_matrix2}
\end{equation}
where $\epsilon = \frac{\mu}{ \lambda + \mu }$.
The above system is referred to as the two-field formulation of
the linear poroelasticity problem.
Since $A_0$ is only positive semi-definite,
the (1,1) block of the above system becomes nearly singular as
$\lambda \to \infty$ (i.e., $\epsilon \to 0$).
This singularity poses challenges in developing effective preconditioners
and causes slow iterative convergence.
To overcome this difficulty, in the next section we consider an inherent regularization 
strategy for solving linear systems associated with matrix $\epsilon A_1  +  A_0$.
Like the continuous counterpart, this matrix actually corresponds to a linear elasticity problem. 
The iterative solution of the linear poroelasticity problem
(\ref{2by2Scheme_matrix2}) with preconditioned MINRES and GMRES will be discussed
in Section~\ref{sec:poro}.
In Section~\ref{sec:poro3}, (\ref{2by2Scheme_matrix2}) will be converted into a three-field
formulation by introducing a numerical pressure and the iterative solution of
the new formulation with MINRES and GMRES will be studied.

\section{The inherent regularization strategy and convergence analysis of MINRES and GMRES
for linear elasticity}
\label{sec::reg}

In this section we consider the inherent regularization strategy for linear systems
associated with $\epsilon A_1  +  A_0$ and study the convergence of
MINRES and GMRES with inexact block Schur complement preconditioning for
the regularized system. We show that the regularized system is equivalent to the original one.
Moreover, the convergence of preconditioned MINRES and GMRES
for those systems is shown to be independent of $\epsilon$ and $h$.

\subsection{An inherent regularization strategy}
\label{sec::reg_1}

We start with noticing that the matrix $\epsilon A_1  +  A_0$ 
corresponds to the WG discretization of the linear elasticity problem \eqref{PDE-elas-2}.
Indeed, the weak formulation of \eqref{PDE-elas-2} in the grad-div format is to find 
$ \V{u} \in (H^1(\Omega))^d$ such that
\begin{equation}
  \mu (\nabla\V{u}, \nabla\V{v}) + (\lambda + \mu ) (\nabla\cdot\V{u}, \nabla\cdot\V{v})
= (\mathbf{f}, \V{v}),
   \quad \forall \V{v} \in (H^1_0(\Omega))^d.
\label{Elas-VarForm}
\end{equation}
The lowest-order WG discretization of this problem is to seek $\V{u}_h \in \V{V}_h$ 
such that 
\begin{equation}
\displaystyle
\mu \sum_{K \in\mathcal{T}_h} (\nabla_w\V{u}_h,\nabla_w\V{v}_h)_K
+(\lambda+\mu) \sum_{K \in\mathcal{T}_h}(\nabla_w\cdot\V{u}_h,\nabla_w\cdot\V{v}_h)_K
= \sum_{K \in\mathcal{T}_h}(\mathbf{f},\V{v}_h^{\circ})_K, \quad \forall \V{v}_h \in \V{V}_h^0.
\label{WGElas-1}
\end{equation}
Its matrix-vector form reads as
\begin{align*}
    (\mu A_1 + (\lambda+\mu)A_0 ) \V{u}_h = \V{b}_1,
\end{align*}
where $A_0$, $A_1$, and $\V{b}_1$ are defined as in \eqref{A0-1}, \eqref{A1-1},
and \eqref{b1-1}, respectively.
Dividing both sides by $(\lambda + \mu)$, we have
\begin{align}
    (\epsilon A_1 + A_0 ) \V{u}_h = \frac{\epsilon}{\mu}\V{b}_1,
    \label{WGElas-2}
\end{align}
where $\epsilon = \frac{\mu}{\lambda+\mu}$.
Notice that $A_0$ is singular (e.g., see \eqref{A0-2}) and 
the whole system is nearly singular when $\epsilon$ is small.
To solve this system, we introduce a numerical pressure as
\begin{align}
    \V{z}_h = - (M_p^{\circ})^{-1} B^{\circ}\V{u}_h
    \label{pseudo-1}
\end{align}
and convert (\ref{WGElas-2}) into a saddle point system, i.e.,
\begin{align*}
\begin{bmatrix}
    \epsilon A_1 & -(B^{\circ})^T\\
    -B^{\circ} & - M_p^{\circ}    
\end{bmatrix}
\begin{bmatrix}
    \V{u}_h \\
    \V{z}_h
\end{bmatrix}
=
\begin{bmatrix}
    \frac{\epsilon}{\mu} \V{b}_1 \\
    \V{0}
\end{bmatrix} .
\end{align*}
This can be re-scaled into
\begin{align}
\begin{bmatrix}
    A_1 & -(B^{\circ})^T\\
    -B^{\circ} & -\epsilon M_p^{\circ}    
\end{bmatrix}
\begin{bmatrix}
   \epsilon \V{u}_h \\
    \V{z}_h
\end{bmatrix}
=
\begin{bmatrix}
   \frac{\epsilon}{\mu} \V{b}_1 \\
    \V{0}
\end{bmatrix}.
\label{WGElas-3}
\end{align}

We now consider the regularization for the above system. Define
\begin{align}
\label{e-1}
\V{1}= \frac{1}{\sqrt{N}}\begin{bmatrix} 1 \\ \vdots \\ 1 \end{bmatrix},
\end{align}
where $N$ is the number of elements in $\mathcal{T}_h$.
Recall that Lemma~\ref{lem:B0-1} implies $\V{1}^T B^{\circ} = \V{0}$.
Then, multiplying the second equation of \eqref{WGElas-3} from the left by $\V{1}^T$,
we get
\[
(M_p^{\circ} \V{1})^T \V{z}_h = 0.
\]
Define
\begin{align}
    \label{w-1}
    \V{w} = \frac{M_p^{\circ} \V{1}}{\| M_p^{\circ} \V{1} \|}.
\end{align}
Then, the equality
\begin{align}
\label{inh-1}
- \rho \V{w} \V{w}^T \V{z}_h = \V{0},
\end{align}
holds for any positive parameter $\rho$. In this section, we assume that
$\rho$ is taken as a finite constant independent of $h$ and $\lambda$.
This is needed to ensure the smallest eigenvalue 
of the preconditioned Schur complement is bounded below by a positive constant; see Lemma~\ref{lem:eigen_bound}.
It is emphasized that (\ref{inh-1}) is an inherent equality of the system (\ref{WGElas-3}).
Adding it to the second equation of (\ref{WGElas-3}), we obtain the regularized system as
\begin{align}
\begin{bmatrix}
    A_1 & -(B^{\circ})^T\\
    -B^{\circ} & -\epsilon M_p^{\circ} - \rho \V{w} \V{w}^T 
\end{bmatrix}
\begin{bmatrix}
   \epsilon \V{u}_h \\
    \V{z}_h
\end{bmatrix}
=
\begin{bmatrix}
    \frac{\epsilon}{\mu}\V{b}_1 \\
    \V{0}
\end{bmatrix} ,
\qquad 
\mathcal{A}_e = 
\begin{bmatrix}
    A_1 & -(B^{\circ})^T\\
    -B^{\circ} & -\epsilon M_p^{\circ} - \rho \V{w} \V{w}^T 
\end{bmatrix} .
\label{WGElas-Reg-1}
\end{align}
It is clear that (\ref{WGElas-Reg-1}) has the same solution as (\ref{WGElas-3}).
Moreover, the corresponding Schur complement is
\begin{align}
    S_e = \epsilon M_p^{\circ} + \rho \V{w} \V{w}^T  + B^{\circ} A_1^{-1} (B^{\circ})^T.
    \label{SchurS}
\end{align}
The following two lemmas show that
$S_e$ is non-singular and the eigenvalues of $\hat{S}_e^{-1}S_e$, where $\hat{S}_e^{-1}$ is a simple
approximation of $S_e$ (cf. (\ref{hatS}) below), stay bounded below and above by positive constants.
As a consequence, the regularized system (\ref{WGElas-Reg-1}) is non-singular.

\begin{lem}
\label{lem:S}
The Schur complement $S_e$ is SPD. Moreover, it satisfies
\begin{align}
\epsilon M_p^{\circ} \leq S_e \leq  (d+\epsilon) (\rho \V{w} \V{w}^T + M_p^{\circ} ),
    \label{lem:S-bound-1}
\end{align}
where the sign ``$\le$'' between matrices is in the sense of negative semi-definite. 
\end{lem}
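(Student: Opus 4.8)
The plan is to prove Lemma~\ref{lem:S} in two stages: first that $S_e$ is symmetric positive definite (SPD), then the two-sided matrix bound \eqref{lem:S-bound-1}. Symmetry is immediate since $M_p^\circ$ is diagonal (hence symmetric), $\V{w}\V{w}^T$ is symmetric by construction, and $B^\circ A_1^{-1}(B^\circ)^T$ is symmetric because $A_1$ is SPD. For positive definiteness, take any $\V{z}_h\neq\V{0}$ and compute $\V{z}_h^T S_e \V{z}_h = \epsilon\,\V{z}_h^T M_p^\circ \V{z}_h + \rho\,(\V{w}^T\V{z}_h)^2 + \V{z}_h^T B^\circ A_1^{-1}(B^\circ)^T\V{z}_h$. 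Since $M_p^\circ$ is SPD and $\epsilon>0$, the first term alone is strictly positive, so $S_e$ is SPD. (This already shows $\epsilon M_p^\circ \le S_e$, the lower bound in \eqref{lem:S-bound-1}, since the other two summands are positive semi-definite.)

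For the upper bound I would bound the remaining two summands. The term $\rho\V{w}\V{w}^T$ is obviously $\le \rho\V{w}\V{w}^T$, so it remains to control $B^\circ A_1^{-1}(B^\circ)^T$. The natural route is to bound $\V{z}_h^T B^\circ A_1^{-1}(B^\circ)^T\V{z}_h$ using the relation \eqref{A0-2}, namely $A_0 = (B^\circ)^T (M_p^\circ)^{-1} B^\circ$, together with a spectral comparison of $A_0$ and $A_1$. One expects $A_0 \le d\, A_1$ (or a similar constant depending on $d$): for WG functions, $\nabla_w\cdot\V{u}_h = \mathrm{tr}(\nabla_w\V{u}_h)$ componentwise, so by Cauchy--Schwarz $(\nabla_w\cdot\V{u}_h)^2 \le d\,|\nabla_w\V{u}_h|^2$ pointwise, giving $\V{u}_h^T A_0\V{u}_h \le d\,\V{u}_h^T A_1\V{u}_h$ for all $\V{u}_h$. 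Writing $\V{y} = (M_p^\circ)^{-1/2} B^\circ \V{x}$ where $\V{x} = A_1^{-1}(B^\circ)^T\V{z}_h$, and using $A_0 \le d\,A_1$ with the identity \eqref{A0-2}, I would show $(B^\circ)^T\V{z}_h$ lies in the range of $(B^\circ)^T(M_p^\circ)^{-1/2}$ and deduce $\V{z}_h^T B^\circ A_1^{-1}(B^\circ)^T\V{z}_h \le d\, \V{z}_h^T B^\circ A_0^{+}(B^\circ)^T\V{z}_h$ — but cleaner is to argue directly: for any $\V{v}$, $\sup_{\V{x}\neq 0} \frac{(\V{v}^T\V{x})^2}{\V{x}^T A_1 \V{x}} = \V{v}^T A_1^{-1}\V{v}$, and comparing against the analogous quotient with $A_0$ one gets $\V{v}^T B^\circ A_1^{-1}(B^\circ)^T \V{v} \le d\,\V{v}^T M_p^\circ \V{v}$ provided $(B^\circ)^T\V{v}$ is orthogonal to $\mathrm{Null}(A_0)$; by Lemma~\ref{lem:B0-1} that null space is spanned by constants, and $\V{z}_h^T B^\circ$ is precisely $(\nabla_w\cdot\V{u}_h,\cdot)$-type data, so this orthogonality holds after noting constants pair to zero. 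Collecting: $\V{z}_h^T S_e\V{z}_h \le \epsilon\,\V{z}_h^T M_p^\circ\V{z}_h + \rho(\V{w}^T\V{z}_h)^2 + d\,\V{z}_h^T M_p^\circ\V{z}_h = (d+\epsilon)\V{z}_h^T M_p^\circ\V{z}_h + \rho(\V{w}^T\V{z}_h)^2 \le (d+\epsilon)\big(\V{z}_h^T M_p^\circ\V{z}_h + \rho(\V{w}^T\V{z}_h)^2\big)$, which is exactly the upper bound in \eqref{lem:S-bound-1}.

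The main obstacle I anticipate is making the inequality $B^\circ A_1^{-1}(B^\circ)^T \le d\,M_p^\circ$ fully rigorous, since $A_0$ is singular and the naive passage from $A_0 \le d A_1$ to a bound on $B^\circ A_1^{-1}(B^\circ)^T$ requires care about ranges and pseudo-inverses. The clean way is: for fixed $\V{z}_h$, set $\V{x}^\ast = A_1^{-1}(B^\circ)^T\V{z}_h$, so $\V{z}_h^T B^\circ A_1^{-1}(B^\circ)^T \V{z}_h = \V{x}^{\ast T} A_1 \V{x}^\ast$ and also $= (\V{x}^{\ast})^T (B^\circ)^T \V{z}_h$. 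Then $\V{z}_h^T B^\circ A_1^{-1}(B^\circ)^T\V{z}_h = \big((B^\circ \V{x}^\ast)^T \V{z}_h\big) \le \|(M_p^\circ)^{-1/2}B^\circ\V{x}^\ast\|\,\|(M_p^\circ)^{1/2}\V{z}_h\| = \big(\V{x}^{\ast T} A_0 \V{x}^\ast\big)^{1/2}\big(\V{z}_h^T M_p^\circ \V{z}_h\big)^{1/2} \le \sqrt{d}\,\big(\V{x}^{\ast T} A_1 \V{x}^\ast\big)^{1/2}\big(\V{z}_h^T M_p^\circ\V{z}_h\big)^{1/2}$, using \eqref{A0-2} and $A_0\le d A_1$. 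Cancelling one factor of $\big(\V{x}^{\ast T}A_1\V{x}^\ast\big)^{1/2} = \big(\V{z}_h^T B^\circ A_1^{-1}(B^\circ)^T\V{z}_h\big)^{1/2}$ yields $\V{z}_h^T B^\circ A_1^{-1}(B^\circ)^T \V{z}_h \le d\,\V{z}_h^T M_p^\circ\V{z}_h$ with no pseudo-inverse or null-space bookkeeping at all. The remaining routine point is verifying the constant $d$ in $A_0 \le d A_1$ at the level of the WG operators $\nabla_w\cdot$ and $\nabla_w$, which follows from the pointwise trace inequality on each element and assembling over $\mathcal{T}_h$.
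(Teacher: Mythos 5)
Your proposal is correct and follows essentially the same route as the paper: both proofs reduce the upper bound to showing $B^{\circ}A_1^{-1}(B^{\circ})^T \le d\,M_p^{\circ}$, and both obtain this from the identity $A_0 = (B^{\circ})^T(M_p^{\circ})^{-1}B^{\circ}$ combined with the discrete trace inequality $\V{u}_h^T A_0 \V{u}_h \le d\,\V{u}_h^T A_1 \V{u}_h$ (coming from $\|\nabla_w\cdot\V{u}_h\|^2 \le d\,\|\nabla_w\V{u}_h\|^2$ elementwise). The only packaging difference is that the paper passes through a Rayleigh-quotient comparison (using $\lambda_{\max}(CC^T)=\lambda_{\max}(C^TC)$ for $C=(M_p^{\circ})^{-1/2}B^{\circ}A_1^{-1/2}$), while your final ``clean'' paragraph reaches the same inequality by a direct Cauchy--Schwarz estimate with $\V{x}^{\ast}=A_1^{-1}(B^{\circ})^T\V{z}_h$ and a cancellation of $\sqrt{q}$; these are equivalent in content. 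Your intermediate musings about null spaces and pseudo-inverses are unnecessary and you rightly abandon them; also note that both your argument and the paper's tacitly use $d+\epsilon\ge 1$ to absorb the $\rho\V{w}\V{w}^T$ term, which holds since $d\ge 1$.
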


\begin{proof}
    It is obvious that $\epsilon M_p^{\circ} \le S_e$.
Moreover, it can be verified directly that
\begin{align*}
   \sum_{K \in \mathcal{T}_h}(\nabla_w \cdot \V{u}_h,\nabla_w \cdot \V{u}_h)_K \le
d \sum_{K \in \mathcal{T}_h}(\nabla_w \V{u}_h,\nabla_w \V{u}_h )_K .
\end{align*}

Since both $A_1$ and $M_p^{\circ}$ are SPD and using the above inequality,
\eqref{A0-1}, and (\ref{A1-1}), we have
\begin{align*}
    \sup_{\V{z}_h \neq 0} \frac{\V{z}_h^{T}( B^{\circ}{A}_1^{-1}(B^{\circ})^T) \V{z}_h}{\V{z}_h^T M_p^{\circ} \V{z}_h} 
    & = \sup_{\V{z}_h \neq 0}  \frac{\V{z}_h^{T} (M_p^{\circ})^{-\frac12} ( B^{\circ}{A}_1^{-1}(B^{\circ})^T)
    (M_p^{\circ})^{-\frac12} \V{z}_h}{\V{z}_h^T \V{z}_h}
    \notag
    \\
    & \le \sup_{\V{u}_h \neq 0} \frac{\V{u}_h^{T}  (B^{\circ})^T (M_p^{\circ})^{-1} B^{\circ}
    \V{u}_h}{\V{u}_h^T A_1 \V{u}_h}
    \notag \\
    & = \sup_{\V{u}_h \neq 0} \frac{\sum_{K \in \mathcal{T}_h}(\nabla_w \cdot \V{u}_h,\nabla_w \cdot \V{u}_h)_K}{\sum_{K \in \mathcal{T}_h}(\nabla_w \V{u}_h,\nabla_w \V{u}_h )_K} \le  d,
\end{align*}
or
\begin{align}
    \sup_{\V{z}_h \neq 0} \frac{\V{z}_h^{T}( B^{\circ}{A}_1^{-1}(B^{\circ})^T) \V{z}_h}{\V{z}_h^T M_p^{\circ} \V{z}_h} 
    \le  d. 
    \label{AB-1}
\end{align}
This leads to the upper bound for $S_e$ in \eqref{lem:S-bound-1}.
\end{proof}

Motivated by the above lemma, we take the approximation of $S_e$ as
\begin{align}
\label{hatS}
    \hat{S}_e =  \rho\V{w} \V{w}^T  + M_p^{\circ} .
\end{align}
Using the Sherman-Morrison-Woodbury formula, we can obtain the expression of the inverse of $\hat{S}_e$ as
\begin{align}
    \hat{S}_e^{-1} =  (M_p^{\circ})^{-1} - \frac{\rho (M_p^{\circ})^{-1} \V{w} \V{w}^T (M_p^{\circ})^{-1}}{1+ \rho \V{w}^T (M_p^{\circ})^{-1} \V{w}} .
    \label{hatS_inv}
\end{align}
Notice that $M_p^{\circ}$ is diagonal so its inverse is trivial to compute.
Moreover, the multiplication of $\hat{S}^{-1}$ with vectors can be implemented efficiently via (\ref{hatS_inv}).
Furthermore, we note that we can take $\hat{S}_e = M_p^{\circ}$ when $\rho$ is not very large.
This choice does not affect the analysis and numerical performance of the corresponding preconditioners.

Now we establish the bounds for the eigenvalues of $\hat{S}_e^{-1} S_e$.
These bounds are needed in the convergence analysis of MINRES and GMRES
for linear elasticity problems.

\begin{lem}
\label{lem:eigen_bound}
When $\rho$ is a finite constant independent of $\lambda$ and $h$,
the eigenvalues of $\hat{S}_e^{-1} S_e$ are bounded by
\begin{align}
\label{eigen_bound-eq}
C_3 + \mathcal{O}(h^{d}) \le \lambda_i(\hat{S}_e^{-1} S_e) \le d + \epsilon, \quad i = 1, ..., N ,
\end{align}
where
\begin{align}
     & C_3 = \displaystyle  \beta^2 \; \frac{\lambda_{\min}(M_p^{\circ})}{\lambda_{\max}(M_p^{\circ})} \; \gamma^2 ,
\qquad \gamma = \V{1}^T \V{w} = \frac{\displaystyle |\Omega|}{\displaystyle \sqrt{N} \sqrt{\sum_{K \in \mathcal{T}_h} |K|^2}}.
\label{C1-1}
\end{align}
\end{lem}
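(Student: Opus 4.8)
The plan is to pass to the symmetric generalized eigenvalue problem. Since $S_e$ is SPD by Lemma~\ref{lem:S} and $\hat S_e=\rho\V{w}\V{w}^T+M_p^{\circ}$ is clearly SPD, $\hat S_e^{-1}S_e$ is similar to $\hat S_e^{-1/2}S_e\hat S_e^{-1/2}$, so its eigenvalues are real and lie between the infimum and the supremum over $\V{z}\neq\V{0}$ of the Rayleigh quotient $R(\V{z})=\V{z}^TS_e\V{z}/\big(\V{z}^T\hat S_e\V{z}\big)$. The upper bound in \eqref{eigen_bound-eq} is then immediate: the second inequality in Lemma~\ref{lem:S} is exactly $S_e\le(d+\epsilon)\hat S_e$, whence $R(\V{z})\le d+\epsilon$ for all $\V{z}$.

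For the lower bound I would exploit the vector $\V{1}$, which spans $\text{Null}((B^{\circ})^{T})$ by Lemma~\ref{lem:B0-1}. Decompose an arbitrary $\V{z}$ as $\V{z}=t\V{1}+\V{z}_1$ with $\V{1}^TM_p^{\circ}\V{z}_1=0$. Because $\V{w}=M_p^{\circ}\V{1}/\|M_p^{\circ}\V{1}\|$, this $M_p^{\circ}$-orthogonal splitting decouples along $\text{span}(\V{1})$ the two scalars entering $R$: $\V{z}^TM_p^{\circ}\V{z}=t^2(\V{1}^TM_p^{\circ}\V{1})+\V{z}_1^TM_p^{\circ}\V{z}_1$ and $\V{w}^T\V{z}=t\,\gamma$; this clean decoupling is the reason $\V{w}$ is chosen proportional to $M_p^{\circ}\V{1}$. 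Dropping the nonnegative $\epsilon M_p^{\circ}$ term and using $(B^{\circ})^{T}\V{1}=\V{0}$ gives $\V{z}^TS_e\V{z}\ge\rho\gamma^2t^2+\V{z}_1^TB^{\circ}A_1^{-1}(B^{\circ})^{T}\V{z}_1$. The last term is bounded below by the inf--sup condition of Lemma~\ref{inf_sup}: writing $\V{z}_1^TB^{\circ}A_1^{-1}(B^{\circ})^{T}\V{z}_1=\sup_{\V{v}_h}\big(\V{z}_1^TB^{\circ}\V{v}_h\big)^2/\big(\V{v}_h^TA_1\V{v}_h\big)\ge\beta^2\|\V{z}_1\|^2$ and using the spectral bounds $\lambda_{\min}(M_p^{\circ})I\le M_p^{\circ}\le\lambda_{\max}(M_p^{\circ})I$ to pass between the coefficient norm and the weighted norm, one obtains $\V{z}_1^TB^{\circ}A_1^{-1}(B^{\circ})^{T}\V{z}_1\ge\beta^2\,\dfrac{\lambda_{\min}(M_p^{\circ})}{\lambda_{\max}(M_p^{\circ})}\,\V{z}_1^TM_p^{\circ}\V{z}_1$.

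Setting $x=t^2\ge0$ and $y=\V{z}_1^TM_p^{\circ}\V{z}_1\ge0$, the two estimates above combine to
\[
R(\V{z})\;\ge\;\frac{\rho\gamma^2\,x+\beta^2\frac{\lambda_{\min}(M_p^{\circ})}{\lambda_{\max}(M_p^{\circ})}\,y}{\big(\V{1}^TM_p^{\circ}\V{1}+\rho\gamma^2\big)\,x+y}\;\ge\;\min\!\left\{\frac{\rho\gamma^2}{\V{1}^TM_p^{\circ}\V{1}+\rho\gamma^2},\;\beta^2\frac{\lambda_{\min}(M_p^{\circ})}{\lambda_{\max}(M_p^{\circ})}\right\},
\]
since a quotient of positive linear forms on the cone $x,y\ge0$ is minimized at a coordinate axis. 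Now $\V{1}^TM_p^{\circ}\V{1}=\tfrac1N\sum_{K}|K|=|\Omega|/N=\mathcal O(h^{d})$ by quasi-uniformity, while $\rho\gamma^2$ is a fixed positive constant (this is where the assumption that $\rho$ is independent of $h$ and $\lambda$ is used), so the first entry equals $1-\mathcal O(h^{d})$; since also $\gamma^2\le1$, the second entry is at least $C_3$. Hence $R(\V{z})\ge C_3+\mathcal O(h^{d})$, giving the left inequality in \eqref{eigen_bound-eq}. The main obstacle is the inf--sup step: one must identify the correct norm in Lemma~\ref{inf_sup}, restrict it to the subspace on which $B^{\circ}A_1^{-1}(B^{\circ})^{T}$ is positive definite (namely $\{\V{z}_1:\V{1}^TM_p^{\circ}\V{z}_1=0\}$), and carefully track the spectral-equivalence constants of the diagonal mass matrix $M_p^{\circ}$ that yield the factor $\lambda_{\min}(M_p^{\circ})/\lambda_{\max}(M_p^{\circ})$ in $C_3$; the rest is elementary bookkeeping with ratios of linear forms and the identities $\gamma=\V{1}^T\V{w}$ and $\V{1}^TM_p^{\circ}\V{1}=|\Omega|/N$.
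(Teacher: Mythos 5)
Your argument is correct, and it supplies what the paper itself omits: the paper's ``proof'' is only a pointer to Lemma~4.2 of \cite{Huang-Wang-2025-Stokes-reg}, and your splitting $\V{z}=t\V{1}+\V{z}_1$ with $\V{1}^TM_p^{\circ}\V{z}_1=0$ --- rank-one term controlling the $\V{1}$-direction, inf--sup controlling the complement, then a minimum of two ratios of linear forms --- is the natural reconstruction of that argument and is consistent with the structure of $C_3$ in \eqref{C1-1}. Two small points deserve note. First, Lemma~\ref{inf_sup} as literally stated (``$\forall p_h\in W_h$'') cannot hold, since by Lemma~\ref{lem:B0-1} the numerator vanishes for constant $p_h$; it is valid precisely on the zero-mean complement where you invoke it, so your restriction is not optional bookkeeping but the repair that makes the step legitimate. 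Second, the provenance of the factor $\lambda_{\min}(M_p^{\circ})/\lambda_{\max}(M_p^{\circ})$ depends on whether $\|\V{p}_h\|$ in Lemma~\ref{inf_sup} is the Euclidean coefficient norm or the $L^2$ (mass-weighted) norm; under either reading your chain of inequalities yields a lower bound at least as large as $\beta^2\,\frac{\lambda_{\min}(M_p^{\circ})}{\lambda_{\max}(M_p^{\circ})}\,\V{z}_1^TM_p^{\circ}\V{z}_1$ (using $\lambda_{\min}/\lambda_{\max}\le 1$ in the $L^2$ case and $\lambda_{\min}(M_p^{\circ})\le 1$ for fine meshes in the Euclidean case), and together with $\gamma^2\le 1$ and $\V{1}^TM_p^{\circ}\V{1}=|\Omega|/N=\mathcal{O}(h^{d})$ this gives \eqref{eigen_bound-eq}. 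The upper bound is, as you say, immediate from Lemma~\ref{lem:S} since its right inequality reads $S_e\le(d+\epsilon)\hat S_e$.
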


\begin{proof}
This proof is rather technical. The proof
is similar to that of Lemma 4.2 of
Huang and Wang \cite{Huang-Wang-2025-Stokes-reg} where $\V{w}$ is a general unit vector and the Schur complement is slightly different from $S_e$ .
\end{proof}

For quasi-uniform meshes, we have ${\lambda_{\min}(M_p^{\circ})}/{\lambda_{\max}(M_p^{\circ})}
= \mathcal{O}(1)$ and $C_3 = \mathcal{O}(1)$.
In these cases, Lemma~\ref{lem:eigen_bound} implies that
the eigenvalues of $\hat{S}_e^{-1} S_e$ are bounded above and below essentially by positive constants.



\subsection{Convergence analysis of MINRES with inexact block diagonal preconditioning}

Now we analyze the convergence of MINRES for the regularized system \eqref{WGElas-Reg-1}
with the inexact block diagonal Schur complement preconditioner
\begin{align}
    \mathcal{P}_{d,e} = 
    \begin{bmatrix}
        A_1 & 0 
        \\
        0 & \hat{S}_e
    \end{bmatrix}
    \label{Elas-diag-precond-1},
    \quad 
    \hat{S}_e = \rho\V{w} \V{w}^T  + M_p^{\circ}.
\end{align}
Notice that the preconditioned regularized coefficient matrix
$\mathcal{P}_{d,e}^{-1} \mathcal{A}_e$ is similar to the symmetric matrix
\begin{align}
    \mathcal{P}_{d,e}^{-\frac{1}{2}} \mathcal{A}_e \mathcal{P}_{d,e}^{-\frac{1}{2}}&= 
    \begin{bmatrix}
        A_1^{-\frac{1}{2}} & 0 \\
        0 &\hat{S}_e^{-\frac{1}{2}}
    \end{bmatrix}
    \begin{bmatrix}
        A_1 & -(B^{\circ})^T \\
       -B^{\circ} & -\epsilon M_{p}^{\circ}-\rho \V{w}\V{w}^T
       \end{bmatrix}
    \begin{bmatrix}
        A_1^{-\frac{1}{2}} & 0 \\
        0 &\hat{S}_e^{-\frac{1}{2}}
    \end{bmatrix} \notag
    \\ 
    & = \begin{bmatrix}
        \mathcal{I} & -A_1^{-\frac{1}{2}} (B^{\circ})^T\hat{S}_e^{-\frac{1}{2}} \\
        -\hat{S}_e^{-\frac{1}{2}} B^{\circ}A_1^{-\frac{1}{2}} &  -\epsilon \hat{S}_e^{-\frac{1}{2}} M_p^{\circ} \hat{S}_e^{-\frac{1}{2}} -\rho \hat{S}_e^{-\frac{1}{2}} \V{w}\V{w}^T  \hat{S}_e^{-\frac{1}{2}}
    \end{bmatrix}.
\end{align}
Thus, MINRES can be applied to the corresponding regularized linear system.
Moreover, $\mathcal{P}_{d,e}$ is simple and straightforward to implement.
Recall that $A_1$ is the WG stiffness matrix of the Laplacian operator for the displacement and
is SPD. The action of its inversion can be carried out using, for example, the conjugate gradient (CG) method
preconditioned with an incomplete Cholesky decomposition. The action of the inversion of
$\hat{S}_e$ can be carried out efficiently using (\ref{hatS_inv}).

In the following we establish the bounds for the eigenvalues of
$ \mathcal{P}_{d,e}^{-1} \mathcal{A}_e $ and then for the residual of MINRES.

\begin{lem}
\label{lem:eigen_bound_diag}
The eigenvalues of $ \mathcal{P}_{d,e}^{-1} \mathcal{A}_e $ lie in 
\begin{align}
  &   
      \Bigg[ -\frac{d+\epsilon}{\sqrt{C_3}  } + \mathcal{O}(h^d), -\frac{2 C_3 }{(1-\epsilon)+\sqrt{(1-\epsilon)^2+4(d+\epsilon)}} + \mathcal{O}(h^d) \Bigg]
      \notag
      \\
      &
      \qquad \qquad \bigcup
\Bigg[\sqrt{C_3} + \mathcal{O}(h^d), 
\;  \frac{1}{2}\left((1-\epsilon)+\sqrt{(1-\epsilon)^2+4(d+\epsilon)}\right)\Bigg],
 \label{lem:eigen_bound_diag-1}
\end{align}
where $C_3$ is defined in (\ref{C1-1}).
\end{lem}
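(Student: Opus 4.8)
The plan is to adapt the classical eigenvalue analysis for symmetrically preconditioned (perturbed) saddle point matrices, relying on exactly two structural ingredients: the Schur-complement bounds of Lemma~\ref{lem:eigen_bound} and an inherent identity that confines the Rayleigh quotient of the $(2,2)$ block to $[\epsilon,1)$. First I would replace $\mathcal{P}_{d,e}^{-1}\mathcal{A}_e$ by the similar symmetric matrix $\widetilde{\mathcal{A}}_e := \mathcal{P}_{d,e}^{-\frac12}\mathcal{A}_e\mathcal{P}_{d,e}^{-\frac12}$, which the excerpt already exhibits in the block form
\[
\widetilde{\mathcal{A}}_e = \begin{bmatrix} \mathcal{I} & -\widehat{B}^T \\[2pt] -\widehat{B} & -C \end{bmatrix}, \qquad \widehat{B} := \hat{S}_e^{-\frac12} B^{\circ} A_1^{-\frac12}, \qquad C := \hat{S}_e^{-\frac12}\big(\epsilon M_p^{\circ} + \rho\,\V{w}\V{w}^T\big)\hat{S}_e^{-\frac12}.
\]
Two facts are then recorded once and for all. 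First, $\widehat{B}\widehat{B}^T + C = \hat{S}_e^{-\frac12} S_e \hat{S}_e^{-\frac12}$ with $S_e$ as in \eqref{SchurS}, which is similar to $\hat{S}_e^{-1} S_e$; hence Lemma~\ref{lem:eigen_bound} gives $s := \V{y}^T(\widehat{B}\widehat{B}^T + C)\V{y} \in [\,C_3 + \mathcal{O}(h^d),\; d+\epsilon\,]$ for every unit vector $\V{y}$. Second, since $\hat{S}_e = \rho\,\V{w}\V{w}^T + M_p^{\circ}$ (see \eqref{hatS}), the trivial identity $\hat{S}_e^{-\frac12}\hat{S}_e\hat{S}_e^{-\frac12}=\mathcal{I}$ yields $C = \mathcal{I} - (1-\epsilon)\,\hat{S}_e^{-\frac12}M_p^{\circ}\hat{S}_e^{-\frac12}$, so that $c := \V{y}^T C\V{y} = 1 - (1-\epsilon)\,m$ with $m := \V{y}^T\hat{S}_e^{-\frac12}M_p^{\circ}\hat{S}_e^{-\frac12}\V{y}$; because $M_p^{\circ}$ is SPD, $M_p^{\circ}\le\hat{S}_e$, and $0<\epsilon<1$, one has $m\in(0,1]$ and therefore $c\in[\epsilon,1)$.

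Next I would take an eigenpair $(\lambda;(\V{x},\V{y}))$ of $\widetilde{\mathcal{A}}_e$. If $\lambda = 1$, then this eigenvalue lies in the first interval of \eqref{lem:eigen_bound_diag-1}, since $\sqrt{C_3}<1$ and $\tfrac12\big((1-\epsilon)+\sqrt{(1-\epsilon)^2+4(d+\epsilon)}\big)\ge\sqrt{d+\epsilon}>1$; such eigenvalues do occur because $\mathrm{Null}(\widehat{B})\neq\{\V{0}\}$. If $\lambda\neq 1$, then necessarily $\V{y}\neq\V{0}$, the first block row gives $\V{x}=(1-\lambda)^{-1}\widehat{B}^T\V{y}$, and substituting this into the second block row and taking the inner product with $\V{y}$ (normalized so that $\|\V{y}\|=1$) reduces the eigenproblem to the scalar quadratic
\[
\lambda^2 - (1-c)\,\lambda - s = 0, \qquad \lambda_\pm = \tfrac12\Big((1-c) \pm \sqrt{(1-c)^2 + 4s}\,\Big), \qquad \lambda_+\lambda_- = -s.
\]
Since $s \ge c \ge \epsilon > 0$, the two roots have opposite signs, with $\lambda_+ > 0 > \lambda_-$.

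It then remains to read off the four endpoints. For $\lambda_+$: from $\lambda_+^2 = (1-c)\lambda_+ + s \ge s$ and the lower bound on $s$ we get $\lambda_+ \ge \sqrt{C_3} + \mathcal{O}(h^d)$; and since $\lambda_+$ is nondecreasing in $s$ and, for $c<1$, nonincreasing in $c$, the bounds $c\ge\epsilon$ and $s\le d+\epsilon$ give $\lambda_+ \le \Lambda_{\max} := \tfrac12\big((1-\epsilon)+\sqrt{(1-\epsilon)^2+4(d+\epsilon)}\big)$. For $\lambda_-$: Vieta's relation $|\lambda_-| = s/\lambda_+$ together with $\lambda_+ \ge \sqrt{C_3}+\mathcal{O}(h^d)$ and $s\le d+\epsilon$ yields $|\lambda_-| \le \frac{d+\epsilon}{\sqrt{C_3}} + \mathcal{O}(h^d)$, while $\lambda_+ \le \Lambda_{\max}$ and $s\ge C_3+\mathcal{O}(h^d)$ yield $|\lambda_-| \ge \frac{C_3}{\Lambda_{\max}} + \mathcal{O}(h^d) = \frac{2C_3}{(1-\epsilon)+\sqrt{(1-\epsilon)^2+4(d+\epsilon)}} + \mathcal{O}(h^d)$. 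Collecting these four inequalities (and recalling that $C_3 = \mathcal{O}(1)$ for quasi-uniform meshes, so the remainders are genuinely of order $h^d$) gives exactly \eqref{lem:eigen_bound_diag-1}.

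The step I expect to be the crux is the second structural fact, $c\in[\epsilon,1)$: it is precisely this consequence of the inherent identity $\hat{S}_e = \rho\,\V{w}\V{w}^T + M_p^{\circ}$ that produces the sharp upper endpoint with $1-\epsilon$ (rather than a cruder bound containing $1$) and, through Vieta's relations, the nonzero separation of the negative branch from the origin. The remaining pieces — the reduction to the scalar quadratic, the monotonicity of $\lambda_\pm$ in $c$ and $s$, and the Vieta bookkeeping — are routine; the only care required is in propagating the $\mathcal{O}(h^d)$ remainder from Lemma~\ref{lem:eigen_bound} and in separately accounting for the eigenvalue $1$ carried by $\mathrm{Null}(\widehat{B})$.
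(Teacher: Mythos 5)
Your proof is correct and follows essentially the same route as the paper: both reduce the eigenproblem (for $\lambda\neq 1$) to the scalar quadratic $\lambda^2-(1-c)\lambda-s=0$, where $s$ is the Rayleigh quotient of $\hat S_e^{-1}S_e$ bounded by Lemma~\ref{lem:eigen_bound} and $1-c=(1-\epsilon)\,m$ with $m\in(0,1]$ the Rayleigh quotient of $M_p^{\circ}$ against $\hat S_e$ (the paper's inequality \eqref{eigen_bound_diag0}), and then bound the roots by monotonicity and Vieta. The only substantive difference is your handling of $\lambda=1$: you correctly note that $1$ \emph{is} an eigenvalue (eigenvectors $(\V{x},\V{0})$ with $A_1^{-1/2}\V{x}\in\mathrm{Null}(B^{\circ})$, which is nontrivial) and verify it lies in the positive interval $[\sqrt{C_3}+\mathcal{O}(h^d),\,\Lambda_{\max}]$ — whereas the paper asserts $\lambda=1$ is not an eigenvalue — so your treatment is the more careful one; just note that you call that interval the ``first'' of \eqref{lem:eigen_bound_diag-1} when it is the second in the display.
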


\begin{proof}
    The eigenvalue problem of the preconditioned system $ \mathcal{P}_{d,e}^{-1} \mathcal{A}_e $ reads as
   \begin{align}
    \begin{bmatrix}
        A_1 & -(B^{\circ})^T \\
       -B^{\circ} &  -\epsilon M_p^{\circ} - \rho \V{w}\V{w}^T
    \end{bmatrix}
    \begin{bmatrix}
       \V{u}_h \\
        \V{z}_h
    \end{bmatrix} = \lambda
   \begin{bmatrix}
        A_1 & 0 \\
        0 & \hat{S}_e
    \end{bmatrix} 
        \begin{bmatrix}
      \V{u}_h \\
        \V{z}_h
    \end{bmatrix} .
    \label{mu0system}
\end{align}
It is not difficult to show that $\lambda = 1$ is not an eigenvalue.
By solving the first equation for $\V{u}_h$ and substituting it into the second equation, we get
\begin{align*}
    \lambda^2 \hat{S}_e \V{z}_h - \lambda (1-\epsilon) M^{\circ}_p \V{z}_h - S_e \V{z}_h = \V{0}.
\end{align*}
From this, we obtain 
\begin{align*}
    \lambda^2  - \lambda (1-\epsilon) \frac{\V{z}_h^TM^{\circ}_p \V{z}_h}{\V{z}_h^T \hat{S}_e \V{z}_h} - \frac{\V{z}_h^T S_e \V{z}_h}{\V{z}_h^T \hat{S}_e \V{z}_h} = 0.
\end{align*}
Solving this for $\lambda$, we have
\[
\lambda_{\pm} = \frac{1}{2} (1-\epsilon) \frac{\V{z}_h^T M^{\circ}_p\V{z}_h}{\V{z}_h^T \hat{S}_e \V{z}_h} \pm \frac{1}{2}\sqrt{  (1-\epsilon)^2 (\frac{\V{z}_h^T M^{\circ}_p \V{z}_h}{\V{z}_h^T \hat{S}_e \V{z}_h})^2
+ 4 \frac{\V{z}_h^T S_e\V{z}_h}{\V{z}_h^T \hat{S}_e \V{z}_h} }.
\]
Then, \eqref{lem:eigen_bound_diag-1} follows from 
Lemma~\ref{lem:eigen_bound} and the inequality 
\begin{align}
 \frac{\lambda_{\min} (M_p^{\circ})}{\rho+ \lambda_{\min} (M_p^{\circ})}  \leq 
 \frac{\V{z}_h^TM_p^{\circ}\V{z}_h}{\V{z}_h^T \hat{S}_e \V{z}_h} \leq 1 .
 \label{eigen_bound_diag0}
\end{align}
\end{proof}

\begin{pro}
\label{pro:MINRES_conv}
The residual of MINRES applied to a preconditioned system associated
with the coefficient matrix $\mathcal{P}_{d,e}^{-1} \mathcal{A}_e $ is bounded by   
\begin{align}
    \frac{\| \V{r}_{2k}\| }{\| \V{r}_0 \|} 
    \le 2 \left( \frac{\sqrt{d+\epsilon} \;(1+\sqrt{1+4(d+\epsilon)} \;)-2 C_3}{\sqrt{d+\epsilon}\; (1+\sqrt{1+4(d+\epsilon)}\; )+2 C_3} + \mathcal{O}(h^d)\right)^k ,
     \label{pro:MINRES_conv-1}
\end{align}
where $C_3$ is given in (\ref{C1-1}).
\end{pro}

\begin{proof}
Following Lemma~\ref{lem:eigen_bound_diag}, 
we denote the bound on the eigenvalues of the 
preconditioned system $\mathcal{P}_{d,e}^{-1}\mathcal{A}_e$ as $[-a_1,-b_1] \cup [c_1,d_1]$, 
where
\begin{align*}
  & a_1 =  \frac{d+\epsilon}{\sqrt{C_3}  } + \mathcal{O}(h^d)  ,
  \qquad
   b_1 = \frac{2 C_3 }{(1-\epsilon)+\sqrt{(1-\epsilon)^2+4(d+\epsilon)}} + \mathcal{O}(h^d)  ,
   \\
   & c_1 = \sqrt{C_3} + \mathcal{O}(h^d),  
    \qquad
   d_1 = \frac{1}{2}\left((1-\epsilon)+\sqrt{(1-\epsilon)^2+4(d+\epsilon)}\right).
\end{align*}
From \cite[Theorem 6.13]{Elman-2014}, after $2k$ steps of MINRES iteration, the residual satisfies
\begin{align*}
    \| \V{r}_{2k}\| 
& \leq 2 \left( \frac{\sqrt{\frac{a_1d_1}{b_1 c_1}} - 1}{\sqrt{\frac{a_1d_1}{b_1 c_1}} + 1}\right)^k \| \V{r}_0 \|
= 2 \left( \frac{\sqrt{d+\epsilon} \;(1+\sqrt{1+4(d+\epsilon)} \;)-2 C_3}{\sqrt{d+\epsilon}\; (1+\sqrt{1+4(d+\epsilon)} \;)+2 C_3} + \mathcal{O}(h^d)\right)^k \| \V{r}_0 \|.
\end{align*}
This gives \eqref{pro:MINRES_conv-1}.
\end{proof}

Recall that $C_3 = \mathcal{O}(1)$ for quasi-uniform meshes. In this case,
Proposition~\ref{pro:MINRES_conv} indicates that
the convergence factor of MINRES can be bounded above by a constant that is less than 1
while the asymptotic error constant is $2$.
In this sense, the convergence of MINRES is parameter-free (i.e., independent of $\epsilon$ and $h$).

\subsection{Convergence analysis of GMRES with inexact block triangular preconditioning}
We now consider the convergence of GMRES for the preconditioned system $\mathcal{P}_{t,e}^{-1} \mathcal{A}_e$
with the inexact block triangular preconditioner
\begin{align}
    \mathcal{P}_{t,e }= 
    \begin{bmatrix}
        A_1 & 0 \\
        -B^{\circ} & -\hat{S}_e
    \end{bmatrix},\quad \hat{S}_e =  \rho\V{w} \V{w}^T  + M_p^{\circ}.
    \label{Elas-tri-precond-1}
\end{align}
Like $\mathcal{P}_{d,e}$, $\mathcal{P}_{t,e }$ is also simple and straightforward to implement.

\begin{pro}
    \label{pro:GMRES_conv}
The residual of GMRES applied to a preconditioned system
associated with the coefficient matrix $\mathcal{P}_{t,e}^{-1} \mathcal{A}_e$
is bounded by
\begin{align}
\frac{\| \V{r}_k\|}{\| \V{r}_0\|} 
\le 
2\left(1+\left (\frac{d\, \lambda_{\max} (M_p^{\circ})}{\lambda_{\min} (A_1)}\right )^\frac{1}{2} + d + \epsilon \right) 
\left(\frac{\sqrt{d+\epsilon} - \displaystyle 
 \sqrt{C_3}}{\sqrt{d+\epsilon} + \displaystyle \sqrt{C_3 } } \; + \mathcal{O}(h^d)\right)^{k-1} ,
\label{GMRES-residual-5}
\end{align}
where $C_3$ is given in (\ref{C1-1}).
\end{pro}
\begin{proof}
    From \cite[Lemma~A.1]{HuangWang_CiCP_2025}, 
    the residual of GMRES for the preconditioned system $\mathcal{P}_{t,e}^{-1} \mathcal{A}_e$ is bounded as 
    \begin{align}
        \frac{\| \V{r}_k \|}{\|\V{r}_0\|} \le
        (1+\|A_1^{-1}(B^{\circ})^T\| + \| \hat{S}_e^{-1} S_e \|) \min\limits_{\substack{p \in \mathbb{P}_{k-1}\\ p(0) = 1}} \| p(\hat{S}_e^{-1} S_e) \| ,
        \label{Elas-bound0}
        \end{align}
where $\mathbb{P}_{k-1}$ denotes the set of polynomials of degree up to $k-1$.
Lemma~\ref{lem:S} implies
\[
\| \hat{S}_e^{-1} S_e \| \le d+\epsilon .
\]
Moreover, since $A_1$ and $M_p^{\circ}$ are SPD,  we have
\begin{align*}
    \| A_1^{-1} (B^{\circ})^T \|^2 
    &= \sup_{\V{z}_h \neq 0} \frac{\V{z}_h^T B^{\circ} A_1^{-1} A_1^{-1} (B^{\circ})^T \V{z}_h}{\V{z}_h^T  \V{z}_h} \nonumber
    \\
   & = \sup_{\V{z}_h \neq 0} \frac{\V{z}_h^T (M_p^{\circ})^{\frac{1}{2}} (M_p^{\circ})^{-\frac{1}{2}}B^{\circ} A_1^{-1}A_1^{-1} (B^{\circ})^T (M_p^{\circ})^{-\frac{1}{2}} (M_p^{\circ})^{\frac{1}{2}} \V{z}_h}{\V{z}_h^T \V{z}_h}
   \notag \\
   & \le \lambda_{\max} (A_1^{-1}) \lambda_{\max} (M_p^{\circ})
 \sup_{\V{u}_h \neq 0} \frac{\V{u}_h^T (B^{\circ})^T (M_p^{\circ})^{-1} B^{\circ} \V{u}_h}{\V{u}_h^T A_1\V{u}_h}
 \notag \\
 & \le \frac{d\, \lambda_{\max} (M_p^{\circ})}{\lambda_{\min} (A_1)} .
\end{align*}

For the minmax problem in \eqref{Elas-bound0}, by shifted Chebyshev polynomials (e.g., see \cite[Pages 50-52]{Greenbaum-1997})
and Lemma~\ref{lem:eigen_bound}, we have
\begin{align*}
   \min\limits_{\substack{p \in \mathbb{P}_{k-1}\\ p(0) = 1}} \| p(\hat{S}_e^{-1}S_e) \|
& = \min\limits_{\substack{p \in \mathbb{P}_{k-1}\\ p(0) = 1}} \max_{i=1,..., N} |p(\lambda_i(\hat{S}_e^{-1}S_e))|
\le \min\limits_{\substack{p \in \mathbb{P}_{k-1}\\ p(0) = 1}} \max_{\gamma \in \left[ C_3 + \mathcal{O}(h^d),d+\epsilon \right]} |p(\gamma)| 
\\
& \leq 2 \left(\frac{\sqrt{d+\epsilon} - \displaystyle 
 \sqrt{C_3}}{\sqrt{d+\epsilon} + \displaystyle \sqrt{C_3 } } \; + \mathcal{O}(h^d)\right)^{k-1} .
\end{align*}
Combining the above results we obtain \eqref{GMRES-residual-5}.
\end{proof}

Recall that ${\lambda_{\min}(M_p^{\circ})}/{\lambda_{\max}(M_p^{\circ})} = \mathcal{O}(1)$
and $C_3 = \mathcal{O}(1)$ for quasi-uniform meshes.
Then, Proposition~\ref{pro:GMRES_conv} implies that the convergence factor and
the asymptotic error constant for GMRES can be bounded above, respectively,
by a less-than-one constant and a constant, both independent of $\epsilon$ and $h$.
The convergence of GMRES is thus parameter-free.

\section{Convergence of MINRES and GMRES for linear poroelasticity in two-field formulation}
\label{sec:poro}

In this section we study the iterative solution of the two-field formulation of
the linear poroelasticity problem (\ref{2by2Scheme_matrix2})
using MINRES and GMRES with inexact block Schur complement preconditioning.
Recall that the action of inverting the leading block of (\ref{2by2Scheme_matrix2})
is equivalent to solving a linear elasticity problem (\ref{WGElas-2}).
Moreover, (\ref{WGElas-2}) can be solved efficiently (with parameter-free convergence)
using the strategy described in the previous section.
As such, the action of inverting the leading block of (\ref{2by2Scheme_matrix2}) can be
implemented efficiently. For this reason, we take the exact inverse of
the leading block, $(\epsilon A_1 + A_0)^{-1}$, in our convergence analysis
in this section.

\subsection{Two-field Schur complement preconditioning}

We now study the Schur complement and its approximation for
(\ref{2by2Scheme_matrix2}). We notice that both $B$ and $D$ contain zero blocks
corresponding to the pressure unknowns $\mathbf{p}_h^{\partial}$ defined on element facets.
The zero blocks can make the estimation of the eigenvalues of the (preconditioned)
Schur complement complicated and difficult. To circumvent this difficulty,
we first eliminate $\mathbf{p}_h^{\partial}$ from (\ref{2by2Scheme_matrix2})
to obtain a reduced system, then develop block Schur complement preconditioners
for the reduced system, and finally establish block preconditioners for the original
system (\ref{2by2Scheme_matrix2}). 


Partition the pressure vector and the pressure Laplacian operator as
\begin{equation}
\mathbf{p}_h = \begin{bmatrix} \mathbf{p}_h^{\circ} \\[0.05in]
\mathbf{p}_h^{\partial}\end{bmatrix},\quad
A_p = \begin{bmatrix} A_p^{\circ\circ} & A_p^{\circ\partial}
\\[0.05in] A_p^{\partial\circ} & A_p^{\partial\partial} \end{bmatrix} .
\label{Ap-1}
\end{equation}
Since $A_p$ is SPD, so are $A_p^{\circ\circ}$, $A_p^{\partial\partial}$, and 
$A_p^{\circ\circ}-A_p^{\circ\partial} (A_p^{\partial\partial})^{-1} A_p^{\partial\circ}$.
From (\ref{b2-1}), (\ref{B-2}), and (\ref{D-2}), we can rewrite the second block of (\ref{2by2Scheme_matrix2}) as
\begin{align}
  \begin{cases}
- \displaystyle \frac{\alpha \epsilon}{\mu} B^{\circ}  \V{u}_h
- \frac{\epsilon }{\mu} \left [ (c_0 M_p^{\circ} + \kappa \Delta t A_p^{\circ\circ} ) \mathbf{p}_h^{\circ}
+ \kappa \Delta t A_p^{\circ\partial}  \mathbf{p}_h^{\partial} \right] = \frac{\epsilon}{\mu} \mathbf{b}_2^{\circ}, 
\\[10pt] 
\displaystyle - \frac{\epsilon}{\mu} \left [
\kappa \Delta t A_p^{\partial\circ} \mathbf{p}_h^{\circ}
+ \kappa \Delta t A_p^{\partial\partial} \mathbf{p}_h^{\partial}
\right ] = 0 .
\end{cases}
\label{p-int}  
\end{align}
Using this, we can rewrite (\ref{2by2Scheme_matrix2}) into
\begin{equation}
    \begin{bmatrix}
        \epsilon A_1  +  A_0 & -\frac{\alpha \epsilon}{\mu } (B^{\circ})^T \\[0.05in]
        -\frac{\alpha \epsilon}{\mu} B^{\circ} & -\frac{\epsilon}{\mu }\tilde{D}
    \end{bmatrix}
    \begin{bmatrix}
        \V{u}_h \\
        \mathbf{p}_h^{\circ}
    \end{bmatrix}
    = \frac{\epsilon}{\mu}
    \begin{bmatrix}
        \mathbf{b}_1 \\
        \mathbf{b}_2^{\circ}
    \end{bmatrix}, \qquad \tilde{\mathcal{A}} = 
    \begin{bmatrix}
        \epsilon A_1  +  A_0 & -\frac{\alpha \epsilon}{\mu } (B^{\circ})^T \\[0.05in]
        -\frac{\alpha \epsilon}{\mu} B^{\circ} & -\frac{\epsilon}{\mu }\tilde{D}
    \end{bmatrix},
    \label{2by2Scheme_matrix2-2}
\end{equation}
where
\begin{equation}
    \label{D-3}
    \tilde{D} = c_0 M_p^{\circ} + \kappa \Delta t \left (A_p^{\circ\circ}-A_p^{\circ\partial} (A_p^{\partial\partial})^{-1}A_p^{\partial\circ} \right ) .
\end{equation}
The Schur complement of this reduced system is 
\begin{align}
    \tilde{S} &= \frac{\epsilon}{\mu} \tilde{D} + (\frac{\alpha \epsilon}{\mu} )^2 B^{\circ} (\epsilon A_1 + A_0)^{-1} (B^{\circ})^T .
    \label{Stilde-poro}
\end{align}
\begin{lem}
\label{lem:S-poro}
The Schur complement $\tilde{S}$ is SPD. Moreover, it satisfies
\begin{align}
    \frac{\epsilon}{\mu} \tilde{D} \le \tilde{S} \le \frac{\epsilon}{\mu} \tilde{D} \Big(1+\frac{1}{c_0} \frac{\alpha^2 d}{\mu}\Big).
    \label{lem:S-bound-poro-1}
\end{align}
\end{lem}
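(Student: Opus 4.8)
The plan is to establish the two matrix inequalities in \eqref{lem:S-bound-poro-1} directly from the definition \eqref{Stilde-poro} of $\tilde S$, using the positive semi-definiteness of $A_0$ to control the Schur term $B^{\circ}(\epsilon A_1+A_0)^{-1}(B^{\circ})^T$. First I would observe that $\tilde D$ is SPD: by \eqref{D-3} it is the sum of $c_0 M_p^{\circ}$ (SPD, since $M_p^{\circ}$ is the diagonal mass matrix \eqref{mass-1} with positive entries) and $\kappa\Delta t$ times $A_p^{\circ\circ}-A_p^{\circ\partial}(A_p^{\partial\partial})^{-1}A_p^{\partial\circ}$, which is SPD because it is the Schur complement of the SPD matrix $A_p$ with respect to its $\partial\partial$ block (as already noted after \eqref{Ap-1}). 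Since $\epsilon/\mu>0$ and $B^{\circ}(\epsilon A_1+A_0)^{-1}(B^{\circ})^T$ is at least positive semi-definite (it has the form $C C^T$ composed with an SPD inverse), $\tilde S$ is SPD, and the lower bound $\frac{\epsilon}{\mu}\tilde D\le\tilde S$ is immediate.

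For the upper bound, the key step is to bound the quadratic form $\V{z}_h^T B^{\circ}(\epsilon A_1+A_0)^{-1}(B^{\circ})^T\V{z}_h$. Since $A_0=(B^{\circ})^T(M_p^{\circ})^{-1}B^{\circ}\ge 0$ by \eqref{A0-2}, we have $\epsilon A_1+A_0\ge A_0$ wherever $A_0$ is invertible; more usefully, I would instead write, for any $\V{z}_h$ and with $\V{u}_h:=(\epsilon A_1+A_0)^{-1}(B^{\circ})^T\V{z}_h$,
\begin{align*}
\V{z}_h^T B^{\circ}(\epsilon A_1+A_0)^{-1}(B^{\circ})^T\V{z}_h
= \V{u}_h^T(\epsilon A_1+A_0)\V{u}_h
\ge \V{u}_h^T A_0\V{u}_h
= \V{u}_h^T(B^{\circ})^T(M_p^{\circ})^{-1}B^{\circ}\V{u}_h,
\end{align*}
but this gives a lower bound, which is the wrong direction. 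The correct route is to use $\epsilon A_1+A_0\ge A_0$ only to note invertibility and then use $\epsilon A_1 + A_0 \ge \epsilon A_1$, hence $(\epsilon A_1+A_0)^{-1}\le(\epsilon A_1)^{-1}$ on the range of $(B^{\circ})^T$ — wait, that requires care since $A_1$ is invertible but $A_0$ is not — actually $\epsilon A_1+A_0$ is SPD (sum of SPD and PSD), so $(\epsilon A_1+A_0)^{-1}\preceq\frac{1}{\epsilon}A_1^{-1}$ holds in the Loewner order. Therefore, using \eqref{AB-1} from Lemma~\ref{lem:S},
\begin{align*}
B^{\circ}(\epsilon A_1+A_0)^{-1}(B^{\circ})^T \preceq \frac{1}{\epsilon}B^{\circ}A_1^{-1}(B^{\circ})^T \preceq \frac{d}{\epsilon}M_p^{\circ}.
\end{align*}
Multiplying by $(\alpha\epsilon/\mu)^2$ gives that the second term in \eqref{Stilde-poro} is bounded above by $\frac{\alpha^2\epsilon d}{\mu^2}M_p^{\circ}$, and since $c_0 M_p^{\circ}\le\tilde D$ (i.e. $M_p^{\circ}\le\frac{1}{c_0}\tilde D$), this term is $\le\frac{\epsilon}{\mu}\cdot\frac{\alpha^2 d}{c_0\mu}\tilde D$. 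Adding the first term $\frac{\epsilon}{\mu}\tilde D$ yields exactly the stated upper bound.

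The main obstacle is getting the direction of the operator-monotonicity step right: one must use $(\epsilon A_1+A_0)^{-1}\preceq\frac{1}{\epsilon}A_1^{-1}$ (valid because $A_0\succeq 0$ and $x\mapsto x^{-1}$ is operator-antitone on SPD matrices), rather than any bound going through $A_0^{-1}$, which does not exist. Once that is in place, everything reduces to the already-established estimate \eqref{AB-1} and the trivial comparison $c_0 M_p^{\circ}\preceq\tilde D$. I would present the argument in the clean Loewner-order form above, citing \eqref{A0-2}, \eqref{AB-1}, and \eqref{D-3}, and noting that all constants are independent of $h$ and $\lambda$ (the $\epsilon$-dependence is exactly the common prefactor $\epsilon/\mu$, which cancels in the relative bound).
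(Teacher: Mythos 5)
Your argument is correct and is essentially the paper's proof in different clothing: both reduce the upper bound to dropping $A_0$ from $(\epsilon A_1+A_0)^{-1}$, invoking the estimate \eqref{AB-1} (equivalently $B^{\circ}A_1^{-1}(B^{\circ})^T\preceq d\,M_p^{\circ}$), and using $c_0 M_p^{\circ}\preceq\tilde D$. The only difference is presentational — you phrase the key step via operator antitonicity of the inverse in the Loewner order, while the paper transposes the Rayleigh quotient from $B^{\circ}G^{-1}(B^{\circ})^T$ to $(B^{\circ})^T(M_p^{\circ})^{-1}B^{\circ}$ before discarding $A_0$ — so no further comment is needed.
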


\begin{proof}
It is obvious that $\frac{\epsilon}{\mu} \tilde{D} \leq \tilde{S}$, which, with the symmetry of
$\tilde{S}$, implies that $\tilde{S}$ is SPD.

Since $\epsilon A_1 + A_0$ and $M_p^{\circ}$ are SPD,
using \eqref{A0-2} we have
\begin{align*}
 \sup_{\V{v}_h \neq 0} \frac{\V{v}_h^T B^{\circ} (\epsilon A_1 + A_0)^{-1} (B^{\circ})^T \V{v}_h}{\V{v}_h^T \tilde{D} \V{v}_h}
& \leq 
 \sup_{\V{v}_h \neq 0}\frac{\V{v}_h^T B^{\circ}(\epsilon A_1 + A_0)^{-1} (B^{\circ})^T \V{v}_h}{c_0 \V{v}_h^T M_p^{\circ} \V{v}_h}
\\
& \leq 
 \sup_{\V{u}_h \neq 0}\frac{1}{c_0} \frac{\V{u}_h^T (B^{\circ})^T (M_p^{\circ})^{-1} B^{\circ} \V{u}_h}{ \epsilon \V{u}_h^T A_1 \V{u}_h}
\leq \frac{d}{c_0}\frac{1}{\epsilon} ,
\end{align*}
which leads to \eqref{lem:S-bound-poro-1}.
\end{proof}

Lemma~\ref{lem:S-poro} indicates that the Schur complement $\tilde{S}$ is spectrally equivalent to $\frac{\epsilon}{\mu} \tilde{D}$. Thus, we take the approximation of $\tilde{S}$ as
\begin{align}
    \hat{\tilde{S}} = \frac{\epsilon}{\mu} \tilde{D}.
    \label{Stildehat-poro}
\end{align}

\begin{lem}
\label{lem:SS-poro-bound}
The eigenvalues of $\hat{\tilde{S}}^{-1} \tilde{S}$ are bounded by
\begin{align}
\label{lem:SS-poro-bound-1}
1 \le \lambda_i(\hat{\tilde{S}}^{-1} \tilde{S}) \le 1+\frac{1}{c_0} \frac{\alpha^2 d}{\mu}, \quad i = 1, ..., N .
\end{align}
\end{lem}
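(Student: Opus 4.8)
The plan is to derive the bounds as a direct consequence of Lemma~\ref{lem:S-poro}. First I would observe that $\hat{\tilde{S}} = \frac{\epsilon}{\mu}\tilde{D}$ is SPD, since $\tilde{D}$ in \eqref{D-3} is a positive combination of the diagonal SPD mass matrix $M_p^{\circ}$ and the SPD matrix $A_p^{\circ\circ}-A_p^{\circ\partial}(A_p^{\partial\partial})^{-1}A_p^{\partial\circ}$ (the latter being SPD by the remarks following \eqref{Ap-1}). Consequently $\hat{\tilde{S}}^{-1}\tilde{S}$ is similar to the symmetric matrix $\hat{\tilde{S}}^{-\frac12}\tilde{S}\hat{\tilde{S}}^{-\frac12}$, so all its eigenvalues are real, and each can be written as a generalized Rayleigh quotient $\lambda_i = \V{v}_i^T\tilde{S}\V{v}_i / \V{v}_i^T\hat{\tilde{S}}\V{v}_i$ for a suitable nonzero vector $\V{v}_i$.

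Next I would rewrite the matrix inequality \eqref{lem:S-bound-poro-1} in terms of $\hat{\tilde{S}}$. Since $\hat{\tilde{S}} = \frac{\epsilon}{\mu}\tilde{D}$, Lemma~\ref{lem:S-poro} reads $\hat{\tilde{S}} \le \tilde{S} \le \big(1 + \frac{1}{c_0}\frac{\alpha^2 d}{\mu}\big)\hat{\tilde{S}}$ in the negative semi-definite ordering. Applying this to any nonzero vector $\V{v}$ and dividing by the strictly positive scalar $\V{v}^T\hat{\tilde{S}}\V{v}$ yields $1 \le \V{v}^T\tilde{S}\V{v} / \V{v}^T\hat{\tilde{S}}\V{v} \le 1 + \frac{1}{c_0}\frac{\alpha^2 d}{\mu}$. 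Taking $\V{v} = \V{v}_i$ then gives \eqref{lem:SS-poro-bound-1}.

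All the analytical content — in particular the inf-sup-free estimate $\V{u}_h^T(B^{\circ})^T(M_p^{\circ})^{-1}B^{\circ}\V{u}_h \le d\,\V{u}_h^T A_1\V{u}_h$ that controls $B^{\circ}(\epsilon A_1 + A_0)^{-1}(B^{\circ})^T$ relative to $\tilde{D}$ — is already contained in the proof of Lemma~\ref{lem:S-poro}, so there is essentially no remaining obstacle here; the statement is a direct corollary. The only point requiring a moment of care is confirming that $\tilde{D}$, hence $\hat{\tilde{S}}$, is genuinely SPD so that $\hat{\tilde{S}}^{-\frac12}$ exists and the Rayleigh-quotient characterization is legitimate, which is exactly what the first paragraph records.
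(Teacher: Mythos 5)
Your proposal is correct and matches the paper's argument, which simply cites Lemma~\ref{lem:S-poro} and the definition $\hat{\tilde{S}} = \frac{\epsilon}{\mu}\tilde{D}$; you have merely filled in the routine Rayleigh-quotient details and the SPD check that the paper leaves implicit.
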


\begin{proof}
    The result follows from Lemma~\ref{lem:S-poro} and the definition of $\hat{\tilde{S}}$.
\end{proof}

Notice that Lemmas~\ref{lem:S-poro} and \ref{lem:SS-poro-bound}
are for the reduced system (\ref{2by2Scheme_matrix2-2}).
However, it is generally more convenient to implement MINRES and GMRES and
related block preconditioning directly for the original system (\ref{2by2Scheme_matrix2}).
To this end, we need to find $\hat{S}$ for (\ref{2by2Scheme_matrix2})
and establish estimates for the eigenvalues of $\hat{S}^{-1} S$.

Recall that the Schur complement for (\ref{2by2Scheme_matrix2}) is
\begin{align}
    S = \frac{\epsilon}{\mu} D + \frac{\alpha^2 \epsilon^2}{\mu^2} B (\epsilon A_1 + A_0)^{-1} B^T.
    \label{S-poro}
\end{align}
From the relationship between $\mathbf{p}_h^{\circ}$ and $\mathbf{p}_h^{\partial}$
(cf. the second equation of (\ref{p-int})) and by comparing
the original and reduced systems (\ref{2by2Scheme_matrix2}) and (\ref{2by2Scheme_matrix2-2}), we obtain
\begin{align}
    \hat{S} = \frac{\epsilon}{\mu} D.
    \label{Shat-poro}
\end{align}
Moreover, we have the following lemma.

\begin{lem}
\label{lem:SS-poro-bound-org}
The eigenvalues of $\hat{{S}}^{-1} {S}$ are bounded by 
\begin{align}
\label{lem:SS-poro-bound-org-1}
1 \le \lambda_i(\hat{{S}}^{-1} {S}) \le 1+\frac{1}{c_0} \frac{\alpha^2 d}{\mu}, \quad i = 1, ..., N + N_f,
\end{align}
where $N_f$ denotes the total number of element facets of $\mathcal{T}_h$.
\end{lem}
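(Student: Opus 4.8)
The plan is to recognize that $\hat{S}^{-1} S$ is, up to similarity, a block-diagonal matrix whose diagonal blocks are $\hat{\tilde{S}}^{-1}\tilde{S}$ and an identity block, so that the bounds of Lemma~\ref{lem:SS-poro-bound} carry over verbatim and the eigenvalue count becomes $N+N_f$. First I would block-triangularize $D$ with respect to the interior/facet pressure splitting \eqref{Ap-1}: the standard block $\mathrm{LDL}^T$ factorization gives
\[
D = L\,\text{diag}\!\big(\tilde{D},\; \kappa\Delta t\, A_p^{\partial\partial}\big)\, L^T, \qquad
L = \begin{bmatrix} I & A_p^{\circ\partial}(A_p^{\partial\partial})^{-1} \\ 0 & I \end{bmatrix},
\]
with $\tilde{D}$ as in \eqref{D-3}, and crucially $L$ does not depend on $c_0,\kappa,\Delta t,\epsilon,\mu$. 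Scaling by $\epsilon/\mu$ then gives $\hat{S} = L\,\text{diag}(\hat{\tilde{S}},T)\,L^T$, where $T := \tfrac{\epsilon}{\mu}\kappa\Delta t\, A_p^{\partial\partial}$ is SPD and $\hat{\tilde{S}}$ is as in \eqref{Stildehat-poro}.

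Next I would feed in the coupling term. Since $B = [(B^\circ)^T,\,0]^T$ by \eqref{B-2}, the matrix $E := \tfrac{\alpha^2\epsilon^2}{\mu^2} B(\epsilon A_1 + A_0)^{-1} B^T$ has a single nonzero block, its $(1,1)$ block being $E_{11} := \tfrac{\alpha^2\epsilon^2}{\mu^2} B^\circ(\epsilon A_1+A_0)^{-1}(B^\circ)^T$; and because $L$ reduces to the identity on the interior block while $E$ vanishes on the facet block, one checks directly that $E = L\,\text{diag}(E_{11},0)\,L^T$. Adding this to $\hat{S}$ and using $\tilde{S} = \hat{\tilde{S}} + E_{11}$ (compare \eqref{Stilde-poro} with \eqref{Stildehat-poro}) yields $S = L\,\text{diag}(\tilde{S},T)\,L^T$, whence
\[
\hat{S}^{-1} S = L^{-T}\,\text{diag}\!\big(\hat{\tilde{S}}^{-1}\tilde{S},\; I\big)\, L^T .
\]
Thus $\hat{S}^{-1}S$ is similar to $\text{diag}(\hat{\tilde{S}}^{-1}\tilde{S},\, I_{N_f})$, so its spectrum consists of the $N$ eigenvalues of $\hat{\tilde{S}}^{-1}\tilde{S}$ (bounded as in Lemma~\ref{lem:SS-poro-bound}) together with the eigenvalue $1$ of multiplicity $N_f$. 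Since the lower bound in Lemma~\ref{lem:SS-poro-bound} is exactly $1$, all $N+N_f$ eigenvalues lie in $[\,1,\; 1+\tfrac{1}{c_0}\tfrac{\alpha^2 d}{\mu}\,]$, which is \eqref{lem:SS-poro-bound-org-1}.

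The point needing real care — the hard part — is verifying that a \emph{single} $L$ block-diagonalizes both $S$ and $\hat{S}$; this is precisely the structural fact behind \eqref{Shat-poro}, namely that the elimination of $\mathbf{p}_h^\partial$ and the displacement-pressure coupling act on disjoint blocks. As an alternative I could bypass the factorization and bound the generalized Rayleigh quotient $\V{x}^T S\V{x}/\V{x}^T\hat{S}\V{x}$ directly, mirroring the proof of Lemma~\ref{lem:S-poro}: the lower bound $1$ is immediate from $E\succeq 0$, and the upper bound follows from $\V{x}^T E\V{x} = \tfrac{\alpha^2\epsilon^2}{\mu^2}(\V{x}^\circ)^T B^\circ(\epsilon A_1+A_0)^{-1}(B^\circ)^T\V{x}^\circ \le \tfrac{\alpha^2\epsilon d}{\mu^2}(\V{x}^\circ)^T M_p^\circ\V{x}^\circ \le \tfrac{1}{c_0}\tfrac{\alpha^2 d}{\mu}\cdot\tfrac{\epsilon}{\mu}\V{x}^T D\V{x}$, the last step using $\kappa\Delta t A_p\succeq 0$. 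I would present the factorization argument as the primary one, since it also makes the $N+N_f$ eigenvalue count transparent.
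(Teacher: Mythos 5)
Your proof is correct, and it takes a genuinely different route from the paper's. The paper works directly with the generalized eigenvalue problem $S\,\V{p}_h=\lambda\hat S\,\V{p}_h$, writes it out blockwise, and splits into cases: for $\lambda=1$ it exhibits eigenvectors, and for $\lambda\neq1$ the facet equation forces $\V{p}_h^{\partial}=-(A_p^{\partial\partial})^{-1}A_p^{\partial\circ}\V{p}_h^{\circ}$, which reduces the problem to $\tilde S\,\V{p}_h^{\circ}=\lambda\hat{\tilde S}\,\V{p}_h^{\circ}$; the bounds then follow from Lemma~\ref{lem:SS-poro-bound}. You instead exhibit an explicit congruence $L$ (the block $\mathrm{LDL}^T$ factor of $D$ with respect to the interior/facet splitting) that simultaneously block-diagonalizes $S$ and $\hat S$, using that the coupling term $E$ lives only in the interior block and is therefore invariant under $L$; this gives $\hat S^{-1}S\sim\mathrm{diag}(\hat{\tilde S}^{-1}\tilde S,\,I_{N_f})$ outright. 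Both arguments rest on the same structural fact — the elimination of $\V{p}_h^{\partial}$ and the displacement--pressure coupling act on disjoint blocks — but your factorization makes the completeness of the spectral decomposition and the $N+N_f$ multiplicity count fully explicit, whereas the paper's case analysis leaves that accounting slightly implicit ("the eigenvalues \ldots include $\lambda=1$ and the eigenvalues of $\hat{\tilde S}^{-1}\tilde S$"). Your fallback Rayleigh-quotient argument is also valid and mirrors the proof of Lemma~\ref{lem:S-poro}; it yields the bounds without the factorization, at the cost of losing the exact description of the spectrum.
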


\begin{proof}
    Consider the generalized eigenvalue problem 
\[
    S \begin{bmatrix}
    \V{p}_h^{\circ}\\ \V{p}_h^{\partial}
    \end{bmatrix}
    = \lambda \hat{S} \begin{bmatrix}
    \V{p}_h^{\circ}\\ \V{p}_h^{\partial}
    \end{bmatrix}.
\]
   From \eqref{B-2}, \eqref{D-2}, \eqref{S-poro}, and \eqref{Shat-poro}, we can write the above equation
   in detail as
    \begin{align*}
        &\left( 
        \begin{bmatrix}
            c_0 M_p^{\circ} + \kappa \Delta t A_p^{\circ \circ} & \kappa \Delta t A_p^{\circ \partial}  \\[0.1in]
            \kappa \Delta t A_p^{\partial \circ}  & \kappa \Delta t A_p^{\partial \partial} 
        \end{bmatrix}
        + \frac{\alpha^2 \epsilon}{\mu} 
        \begin{bmatrix}
            B^{\circ} (\epsilon A_1 + A_0)^{-1} (B^{\circ})^T & 0\\[0.1in]
            0 & 0
        \end{bmatrix}
        \right)
        \begin{bmatrix}
            \V{p}_h^{\circ} \\ \V{p}_h^{\partial}
        \end{bmatrix}
        \\[0.2in]
        & 
        \qquad \qquad = \lambda 
        \begin{bmatrix}
         c_0 M_p^{\circ} + \kappa \Delta t A_p^{\circ \circ} & \kappa \Delta t A_p^{\circ \partial}  \\[0.1in]
            \kappa \Delta t A_p^{\partial \circ}  & \kappa \Delta t A_p^{\partial \partial} 
        \end{bmatrix}
        \begin{bmatrix}
            \V{p}_h^{\circ} \\ \V{p}_h^{\partial}
        \end{bmatrix}.
    \end{align*}
This leads to the system
\begin{align}
   \begin{cases}
    \left( c_0 M_p^{\circ} +\kappa \Delta t A_p^{\circ \circ}   + \frac{\alpha^2 \epsilon}{\mu}  B^{\circ} (\epsilon A_1 + A_0)^{-1} (B^{\circ})^T  \right) \V{p}_h^{\circ}
    + \kappa \Delta t A_p^{\circ \partial} \V{p}_h^{\partial} 
    \\[0.2in]
    \qquad \qquad \qquad \qquad = \lambda \left( c_0 M_p^{\circ} + \kappa \Delta t A_p^{\circ \circ} \right) \V{p}_h^{\circ}
    + \lambda \kappa \Delta t A_p^{\circ \partial} \V{p}_h^{\partial},
    \\[0.2in]
    \kappa \Delta t A_p^{\partial \circ} \V{p}_h^{\circ} + 
     \kappa \Delta t A_p^{\partial \partial} \V{p}_h^{\partial} = \lambda \left( \kappa \Delta t A_p^{\partial \circ} \V{p}_h^{\circ} + \kappa \Delta t A_p^{\partial \partial} \V{p}_h^{\partial}\right).
\end{cases} 
\label{lem:SS-poro-bound-org-2}
\end{align}
It is not difficult to see that $\lambda = 1$ is a repeated eigenvalue
and the corresponding eigenvectors are $\V{p}_h^{\circ} = \V{1}$ and
$\V{p}_h^{\partial}$ arbitrary.

For the case when $\lambda \neq 1$, the second equation of \eqref{lem:SS-poro-bound-org-2} gives 
\[
\V{p}_h^{\partial} = -(A_p^{\partial \partial} )^{-1}A_p^{\partial \circ} \V{p}_h^{\circ}.
\]
Substituting this into the first equation of \eqref{lem:SS-poro-bound-org-2}, we obtain
\[
\tilde{S} \V{p}_h^{\circ} = \lambda \hat{\tilde{S}}  \V{p}_h^{\circ}.
\]
This implies that any eigenvalue of $\hat{\tilde{S}}^{-1} \tilde{S} $
is also an eigenvalue of $\hat{S}^{-1} S$.

Thus, the eigenvalues of $\hat{S}^{-1} S$ include $\lambda = 1$
and the eigenvalues of $\hat{\tilde{S}}^{-1} \tilde{S}$.
Moreover, the bounds in (\ref{lem:SS-poro-bound-org-1}) follow from
those in Lemma~\ref{lem:SS-poro-bound}.
\end{proof}

\subsection{Convergence analysis of MINRES}
\label{sec:poro-MINRES}

Now, we consider the convergence of MINRES \eqref{2by2Scheme_matrix2}
with the block diagonal Schur complement preconditioner
\begin{align}
    \mathcal{P}_d = 
    \begin{bmatrix}
        \epsilon A_1 + A_0 & 0 \\
        0 & \hat{S}
    \end{bmatrix},
     \qquad \hat{S} = \frac{\epsilon}{\mu }D .
    \label{Pd-poro}
\end{align}
Since $\mathcal{P}_d$ is SPD, the preconditioned coefficient matrix $\mathcal{P}_d^{-1} \mathcal{A}$
is similar to the symmetric matrix $\mathcal{P}_d^{-\frac{1}{2}}\mathcal{A} \mathcal{P}_d^{-\frac{1}{2}}$, which allows MINRES to be applied to $\mathcal{P}_d^{-1} \mathcal{A}$.

\begin{lem}
\label{lem:eigen_bound_diag_poro}
The eigenvalues of $ \mathcal{P}_d^{-1} \mathcal{A} $ lie in 
\begin{align}
  &   
      \Bigg[-\sqrt{1+\frac{1}{c_0} \frac{\alpha^2 d}{\mu}} \; , -1 \Bigg] 
      \bigcup
\Bigg[ 1, \; \sqrt{1+\frac{1}{c_0} \frac{\alpha^2 d}{\mu}} \; \Bigg].
 \label{lem:eigen_bound_diag_poro-1}
\end{align}
\end{lem}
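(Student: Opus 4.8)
The plan is to reduce the generalized eigenvalue problem $\mathcal{A}\V{x} = \lambda \mathcal{P}_d \V{x}$ to the spectral bounds on $\hat{S}^{-1}S$ already established in Lemma~\ref{lem:SS-poro-bound-org}. Writing the eigenvector as $\V{x} = (\V{u}_h,\V{p}_h)$ and using \eqref{2by2Scheme_matrix2} and \eqref{Pd-poro}, the eigenvalue equations read
\begin{align*}
(\epsilon A_1 + A_0)\V{u}_h - \frac{\alpha\epsilon}{\mu} B^T \V{p}_h &= \lambda (\epsilon A_1 + A_0)\V{u}_h ,
\\
-\frac{\alpha\epsilon}{\mu} B \V{u}_h - \frac{\epsilon}{\mu} D \V{p}_h &= \lambda \, \frac{\epsilon}{\mu} D \V{p}_h .
\end{align*}
First I would dispose of the degenerate cases. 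If $\V{p}_h = \V{0}$, then since $\epsilon A_1 + A_0$ is SPD the first equation forces $\lambda = 1$, which lies in the claimed interval; and if $\V{p}_h \ne \V{0}$ but $\lambda = 1$, there is again nothing to prove. Hence it suffices to treat the generic case $\V{p}_h \ne \V{0}$ and $\lambda \ne 1$.

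In that case I would solve the first equation for $\V{u}_h = \frac{\alpha\epsilon}{\mu(1-\lambda)}(\epsilon A_1 + A_0)^{-1} B^T \V{p}_h$, substitute it into the second equation, rearrange, and multiply through by $\frac{\mu}{\epsilon}(1-\lambda)$ to obtain
\[
-\frac{\alpha^2\epsilon}{\mu}\, B(\epsilon A_1 + A_0)^{-1} B^T \V{p}_h = (1-\lambda^2)\, D \V{p}_h .
\]
Recalling from \eqref{S-poro} and \eqref{Shat-poro} that $\frac{\alpha^2\epsilon}{\mu} B(\epsilon A_1 + A_0)^{-1} B^T = \frac{\mu}{\epsilon}(S - \hat{S})$ and $D = \frac{\mu}{\epsilon}\hat{S}$, this collapses to $S \V{p}_h = \lambda^2 \hat{S} \V{p}_h$, so that $\lambda^2$ is an eigenvalue of $\hat{S}^{-1} S$. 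Lemma~\ref{lem:SS-poro-bound-org} then yields $1 \le \lambda^2 \le 1 + \frac{1}{c_0}\frac{\alpha^2 d}{\mu}$, hence $|\lambda| \in [\,1, \sqrt{1+\frac{1}{c_0}\frac{\alpha^2 d}{\mu}}\,]$. Combining the three cases and splitting into the positive and negative branches gives \eqref{lem:eigen_bound_diag_poro-1}.

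The argument is largely routine and I do not anticipate a real obstacle. The points that require a little care are (i) isolating the value $\lambda = 1$, where the elimination of $\V{u}_h$ is not valid, and noting that $\lambda = 0$ cannot occur because $S$ and $\hat{S}$ are SPD so $\lambda^2 > 0$; and (ii) the zero sub-blocks of $B$ and $D$ associated with the facet pressures $\V{p}_h^{\partial}$ — but this has already been absorbed into Lemma~\ref{lem:SS-poro-bound-org}, so once the relation $S\V{p}_h = \lambda^2 \hat{S}\V{p}_h$ is in hand nothing further is needed. I would also remark that the symmetry of the bound about the origin is automatic from $\lambda^2 \in \mathrm{spec}(\hat{S}^{-1}S)$, even though the eigenvalues of $\mathcal{P}_d^{-1}\mathcal{A}$ themselves need not form a symmetric set.
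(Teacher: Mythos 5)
Your proposal is correct and follows essentially the same route as the paper: eliminate $\V{u}_h$ from the first block equation, reduce to $S\V{p}_h=\lambda^2\hat S\V{p}_h$, and invoke Lemma~\ref{lem:SS-poro-bound-org}. The only cosmetic difference is your treatment of $\lambda=1$ (you absorb it into the claimed interval, while the paper asserts it is not an eigenvalue), which does not affect the conclusion.
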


\begin{proof}
    The eigenvalue problem of the preconditioned system $ \mathcal{P}_d^{-1} \mathcal{A} $ reads as
   \begin{align}
    \begin{bmatrix}
        \epsilon A_1  +  A_0 & -\frac{\alpha \epsilon}{\mu } B^T \\[0.05in]
        -\frac{\alpha \epsilon}{\mu} B & -\frac{\epsilon}{\mu }D
        \end{bmatrix}
    \begin{bmatrix}
       \V{u}_h \\
        \V{p}_h
    \end{bmatrix} = \lambda
   \begin{bmatrix}
        \epsilon A_1  +  A_0 & 0 \\
        0 & \hat{S}
    \end{bmatrix} 
        \begin{bmatrix}
      \V{u}_h \\
        \V{p}_h
    \end{bmatrix} .
\end{align}
It is not difficult to see that $\lambda = 1$ is not an eigenvalue.
By solving the first equation for $\V{u}_h$ and substituting it into the second equation, we obtain
\begin{align*}
    \lambda^2 \hat{S} \V{p}_h  - S \V{p}_h = \V{0}.
\end{align*}
This gives
\[
\lambda_{\pm} = \pm
\sqrt{\frac{\V{p}_h^T S \V{p}_h}{\V{p}_h^T \hat{S} \V{p}_h}}.
\]
Using this and Lemma~\ref{lem:SS-poro-bound-org}, we obtain the bounds in \eqref{lem:eigen_bound_diag_poro-1}.
\end{proof}

\begin{pro}
\label{pro:MINRES_conv-poro}
The residual of MINRES applied to the preconditioned system $\mathcal{P}_{d}^{-1} \mathcal{A} $ is bounded by   
\begin{align}
    \frac{\| \V{r}_{2k}\| }{\| \V{r}_0 \|} 
    \le 2 \left( \frac{\sqrt{1+\frac{1}{c_0} \frac{\alpha^2 d}{\mu}}-1}{ \sqrt{1+\frac{1}{c_0} \frac{\alpha^2 d}{\mu}} +1} \right)^k.
     \label{pro:MINRES_conv-poro-1}
\end{align}
\end{pro}

\begin{proof}
We can obtain \eqref{pro:MINRES_conv-poro-1} by following the proof of Proposition~\ref{pro:MINRES_conv}
and using Lemma~\ref{lem:eigen_bound_diag_poro}.
\end{proof}

From Proposition~\ref{pro:MINRES_conv-poro}, we can see that
both the convergence factor and asymptotic error constant
for MINRES for the preconditioned poroelasticity system are
independent of $\epsilon$ and $h$.

\subsection{Convergence analysis of GMRES}
\label{sec:poro-GMRES}

Next, we consider the convergence of GMRES with
the lower triangular block Schur complement preconditioner
\begin{align}
    \mathcal{P}_t = 
    \begin{bmatrix}
        \epsilon A_1 + A_0 & 0 \\
        -\frac{\alpha \epsilon}{\mu} B & -\hat{S}
    \end{bmatrix},
    \qquad
    \hat{S} =  \frac{\epsilon}{\mu }  D.
    \label{Pt-poro}
\end{align}

\begin{pro}
    \label{pro:poro_GMRES_conv}
The residual of GMRES applied to the preconditioned system $\mathcal{P}_{t}^{-1} \mathcal{A}$ is bounded by
\begin{align}
\frac{\| \V{r}_k\|}{\| \V{r}_0\|} 
\le 
2\Bigg(2 +  \frac{\alpha \sqrt{d}}{\mu}\frac{\lambda_{\max} (M_p^{\circ})}{\lambda_{\min} (A_1)} + \frac{1}{c_0} \frac{\alpha^2 d}{\mu} \Bigg) \left(\frac{\sqrt{1+ \frac{1}{c_0} \frac{\alpha^2 d}{\mu}} - 1}{\sqrt{1+ \frac{1}{c_0} \frac{\alpha^2 d}{\mu}} + 1 } \right)^{k-1}  .
\label{pro:poro_GMRES_conv-1}
\end{align}
\end{pro}

\begin{proof}
    From \cite[Lemma~A.1]{HuangWang_CiCP_2025}, 
    the residual of GMRES for the preconditioned system $\mathcal{P}_{t}^{-1} \mathcal{A}$ is bounded as 
    \begin{align}
        \frac{\| \V{r}_k \|}{\|\V{r}_0\|} \le
        (1+\frac{\alpha \epsilon}{\mu} \|(\epsilon A_1 + A_0)^{-1}B^T\| + \| \hat{S}^{-1} S \|) \min\limits_{\substack{p \in \mathbb{P}_{k-1}\\ p(0) = 1}} \| p(\hat{S}^{-1} S) \| .
        \label{bound0}
        \end{align}
From Lemma~\ref{lem:SS-poro-bound-org}, we have
\[
\| \hat{S}^{-1} S \| \leq 1+\frac{1}{c_0} \frac{\alpha^2 d}{\mu}.
\]
Moreover, using \eqref{AB-1} and the fact that $A_0$ is positive semi-definite
and both $A_1$ and $M_p^{\circ}$ are SPD, we have
\begin{align*}
    \| (\epsilon A_1 + A_0)^{-1} B^T \|^2 
    & \leq \sup_{\V{p}_h \neq 0} \frac{\V{p}_h^T B (\epsilon A_1)^{-1} (\epsilon A_1)^{-1} B^T \V{p}_h}{\V{p}_h^T  \V{p}_h} \nonumber
    \\
   & = \frac{1}{\epsilon^2}\sup_{\V{p}_h \neq 0} \frac{\V{p}_h^T (M_p^{\circ})^{\frac{1}{2}} (M_p^{\circ})^{-\frac{1}{2}}B A_1^{-1}A_1^{-1} B^T (M_p^{\circ})^{-\frac{1}{2}} (M_p^{\circ})^{\frac{1}{2}} \V{p}_h}{\V{p}_h^T \V{p}_h}
   \notag \\
   & \le \frac{1}{\epsilon^2} \lambda_{\max} (A_1^{-1}) \lambda_{\max} (M_p^{\circ})
 \sup_{\V{u}_h \neq 0} \frac{\V{u}_h^T (B^{\circ})^T (M_p^{\circ})^{-1} B^{\circ} \V{u}_h}{\V{u}_h^T A_1 \V{u}_h}
 \notag \\
 & \le \frac{d}{\epsilon^2} \frac{\lambda_{\max} (M_p^{\circ})}{\lambda_{\min} (A_1)} .
\end{align*}
For the minmax problem in \eqref{bound0}, by shifted Chebyshev polynomials (e.g., see \cite[Pages 50-52]{Greenbaum-1997})
and Lemma~\ref{lem:SS-poro-bound-org}, we have
\begin{align*}
   \min\limits_{\substack{p \in \mathbb{P}_{k-1}\\ p(0) = 1}} \| p(\hat{S}^{-1}S) \|
& = \min\limits_{\substack{p \in \mathbb{P}_{k-1}\\ p(0) = 1}} \max_{i=1,..., N} |p(\lambda_i(\hat{S}^{-1}S))|
\le \min\limits_{\substack{p \in \mathbb{P}_{k-1}\\ p(0) = 1}} \max_{\gamma \in \left[1, 1+ \frac{1}{c_0} \frac{\alpha^2 d}{\mu}\right]} |p(\gamma)| 
\\
& \leq 2 \left(\frac{\sqrt{1+ \frac{1}{c_0} \frac{\alpha^2 d}{\mu}} - 1}{\sqrt{1+ \frac{1}{c_0} \frac{\alpha^2 d}{\mu}} + 1 } \right)^{k-1} .
\end{align*}
Using the above results we get \eqref{pro:poro_GMRES_conv-1}.
\end{proof}

Recall that ${\lambda_{\min}(M_p^{\circ})}/{\lambda_{\max}(M_p^{\circ})} = \mathcal{O}(1)$
for quasi-uniform meshes.
Proposition~\ref{pro:poro_GMRES_conv} implies that the convergence factor of GMRES
is bounded above by a constant less than one, and the asymptotic error constant
is bounded above by a constant. Moreover, both of them are independent
of $\epsilon$ and $h$. Thus, the convergence of GMRES with the block triangular preconditioner
$\mathcal{P}_t$ is parameter-free.

\section{Convergence of MINRES and GMRES for linear poroelasticity in three-field formulation}
\label{sec:poro3}

In this section we convert the linear poroelasticity problem
from its two-field formulation (\ref{2by2Scheme_matrix2}) to
a three-field formulation by introducing a numerical pressure variable
and study the iterative solution of the new formulation
using MINRES and GMRES with inexact block Schur complement preconditioning.
Recall that in the previous section we have considered solving (\ref{2by2Scheme_matrix2})
with the block preconditioners where the action of inversion of the leading
block $\epsilon A_1 + A_0$ is carried out by converting the corresponding
linear system into a saddle point problem (by introducing a numerical
pressure variable). We would like to introduce the numerical pressure variable directly
for (\ref{2by2Scheme_matrix2}) and convert it into a three-field formulation.
Moreover, the solution procedure in the previous section
involves three levels of nested loops, an outer loop (MINRES/GMRES solution
of (\ref{2by2Scheme_matrix2})), a loop (MINRES/GMRES solution of the saddle point problem
associated with $\epsilon A_1 + A_0$) nested inside the outer loop,
and a third loop (the preconditioned conjugate gradient (PCG) solution
for linear systems associated with $A_1$)
nested inside the second loop. The number of levels of nested loops
will reduce to two for the three-field formulation.

\subsection{Three-field formulation}
\label{sec:poro3-3form}

As in Section~\ref{sec::reg}, we introduce the numerical pressure as
$\V{z}_h = - (M_p^{\circ})^{-1} B^{\circ}\V{u}_h$. Using this and \eqref{A0-2}
and with rescaling, from (\ref{2by2Scheme_matrix2}) we obtain the three-field
formulation as
\begin{equation}
    \begin{bmatrix}
      A_1  &  B^T & -(B^{\circ})^T\\[0.1in]
     B & \displaystyle -\frac{\mu}{ \alpha^2 }D & 0\\[0.1in]
     -B^{\circ} & 0& \displaystyle -\epsilon M_{P}^{\circ}
    \end{bmatrix}
    \begin{bmatrix}
        \mathbf{u}_h \\[0.1in]
        \frac{\alpha}{\mu}\mathbf{p}_h \\[0.1in]
        \frac{1}{\epsilon} \mathbf{z}_h
    \end{bmatrix}
    =
     \begin{bmatrix}
       \frac{1}{\mu} \mathbf{b}_1 \\[0.1in]
       \frac{1}{\alpha} \mathbf{b}_2 \\[0.1in]
       0
    \end{bmatrix} .
    \label{3by3Scheme}
\end{equation}
By eliminating the edge part $\mathbf{p}_h^{\partial}$
of $\mathbf{p}_h$ from the above system we get
\begin{align}
    \begin{bmatrix}
      A_1  &  (B^{\circ})^T & -(B^{\circ})^T\\[0.1in]
     B^{\circ} & \displaystyle -\frac{\mu}{ \alpha^2 }\tilde{D} & 0\\[0.1in]
     -B^{\circ} & 0& \displaystyle -\epsilon M_{p}^{\circ}
    \end{bmatrix}
    \begin{bmatrix}
        \mathbf{u}_h \\[0.1in]
        \frac{\alpha}{\mu}\mathbf{p}_h^{\circ} \\[0.1in]
        \frac{1}{\epsilon} \mathbf{z}_h
    \end{bmatrix}
    =
     \begin{bmatrix}
       \frac{1}{\mu} \mathbf{b}_1 \\[0.1in]
       \frac{1}{\alpha} \mathbf{b}_2^{\circ} \\[0.1in]
       0
    \end{bmatrix} ,
    \label{3field_eqn1}
\end{align}
where 
\begin{equation}
    \label{D-3}
    \tilde{D} = c_0 M_p^{\circ} + \kappa \Delta t \left (A_p^{\circ\circ}-A_p^{\circ\partial} (A_p^{\partial\partial})^{-1}A_p^{\partial\circ} \right ) .
\end{equation}
The above system is nearly singular when $\epsilon$ is small. 
Using the same strategy as for linear elasticity in Subsection~\ref{sec::reg_1}, we can obtain
the inherent equality \eqref{inh-1}, where $\V{w}$ is defined in (\ref{w-1}).
Adding \eqref{inh-1} to the third block equation of \eqref{3field_eqn1}, we obtain
\begin{align}
    \begin{bmatrix}
      A_1  &  (B^{\circ})^T & -(B^{\circ})^T\\[0.1in]
     B^{\circ} & \displaystyle -\frac{\mu}{ \alpha^2 }\tilde{D} & 0\\[0.1in]
     -B^{\circ} & 0& \displaystyle -\epsilon M_{p}^{\circ} -\rho \V{w}\V{w}^T
    \end{bmatrix}
    \begin{bmatrix}
        \mathbf{u}_h \\[0.1in]
        \frac{\alpha}{\mu}\mathbf{p}_h^{\circ} \\[0.1in]
        \frac{1}{\epsilon} \mathbf{z}_h
    \end{bmatrix}
    =
     \begin{bmatrix}
       \frac{1}{\mu} \mathbf{b}_1 \\[0.1in]
       \frac{1}{\alpha} \mathbf{b}_2^{\circ} \\[0.1in]
       0
    \end{bmatrix} ,
    \label{3field_eqn2}
\end{align}
where $\rho$ is a positive parameter. Here, we assume that $\rho$ is chosen
as $\rho = \mathcal{O}(h^d)$; see (\ref{rho-1}) for a specific choice.
This choice is needed to avoid small eigenvalues of the preconditioned Schur complement (cf. Lemma~\ref{lem:b-bound}).

\label{sec:poro3-implementation}

By adding the third row to the second row and the third column to the second column, we get
\begin{align}
     \begin{bmatrix}
      A_1  & 0 & -(B^{\circ})^T\\[0.1in]
     0 & \displaystyle -\frac{\mu}{ \alpha^2 }\tilde{D} - \epsilon M_p^{\circ} - \rho \V{w}\V{w}^T & - \epsilon M_p^{\circ} - \rho \V{w}\V{w}^T\\[0.1in]
     -B^{\circ} & - \epsilon M_p^{\circ} - \rho \V{w}\V{w}^T& \displaystyle -\epsilon M_{p}^{\circ} -\rho \V{w}\V{w}^T
    \end{bmatrix}
     \begin{bmatrix}
        \mathbf{u}_h \\[0.1in]
        \frac{\alpha}{\mu}\mathbf{p}_h^{\circ} \\[0.1in]
        \frac{1}{\epsilon} \mathbf{z}_h - \frac{\alpha}{\mu}\mathbf{p}_h^{\circ}
    \end{bmatrix}
    =
     \begin{bmatrix}
       \frac{1}{\mu} \mathbf{b}_1 \\[0.1in]
       \frac{1}{\alpha} \mathbf{b}_2^{\circ} \\[0.1in]
       0
    \end{bmatrix} ,
    \label{3field_eqn3}
\end{align}
where 
\begin{align}
    \tilde{\mathcal{A}_3} = 
\left [
\begin{array}{c|cc}
     A_1  &  0 & -(B^{\circ})^T\\
     \noalign{\vskip 0.1in} \hline \noalign{\vskip 0.1in}
     \displaystyle  0 &-\frac{\mu}{ \alpha^2 }\tilde{D}-\epsilon M_p^{\circ} - \rho \V{w}\V{w}^T& -\epsilon M_p^{\circ}- \rho \V{w}\V{w}^T\\[0.1in]
     \displaystyle  -B^{\circ} & -\epsilon M_p^{\circ}- \rho \V{w}\V{w}^T & -\epsilon M_{p}^{\circ} - \rho \V{w}\V{w}^T\\    
\end{array} 
\right ] .
    \label{3by3Scheme-2}
\end{align}
This is the three-field formulation we will analyze for its iterative solution.
The Schur complement for this system is given by
\begin{align}
     \tilde{S}_3  =
         \begin{bmatrix}
           \displaystyle  \frac{\mu}{ \alpha^2 }\tilde{D}+\epsilon M_p^{\circ} + \rho \V{w}\V{w}^T & \epsilon M_p^{\circ} + \rho \V{w}\V{w}^T\\[0.1in]
     \displaystyle  \epsilon M_p^{\circ} + \rho \V{w}\V{w}^T & \epsilon M_{p}^{\circ} + \rho \V{w}\V{w}^T + B^{\circ} A_1^{-1} (B^{\circ})^T
       \end{bmatrix} .
\label{S3-1}
\end{align} 
We take the approximation to $\tilde{S}_3$ as
\begin{align}
    \hat{\tilde{S_3}} = 
    \begin{bmatrix}
        \displaystyle
        \frac{\mu}{\alpha^2} \tilde{D}+\epsilon M_p^{\circ} + \rho \V{w}\V{w}^T & 0 \\[0.1in]
        \displaystyle
        0 & M_p^{\circ} 
    \end{bmatrix}.
    \label{approxSchur-3field}
\end{align}
For block preconditioning of the system (\ref{3field_eqn3}), we need to carry out
the action of inversion of $\hat{\tilde{S_3}}$, which will be discussed
in Subsection~\ref{sec:poro3-implementation}.

\subsection{Convergence analysis of MINRES and GMRES with block preconditioners}
\label{sec:poro3-analysis}

In this subsection we present an analysis for the convergence of MINRES and GMRES
with block preconditioning for (\ref{3field_eqn3}).
We start with estimating the eigenvalues of $\hat{\tilde{S_3}}^{-1} \tilde{S_3}$.

\begin{lem}
    \label{lem:b-bound}
    If $\rho$ is taken as
    \begin{align}
        \label{rho-1}
        \rho = \frac{\beta^2 \lambda_{\max}(M_p^\circ) \lambda_{\min}(M_p^\circ)}
        {\lambda_{\max}(M_p^\circ) + \gamma^2 \lambda_{\min}(M_p^\circ)},
    \end{align}
    then the eigenvalues of $(M_p^\circ)^{-1} \Big ( \epsilon M_p^{\circ} + \rho \V{w}\V{w}^T + B^{\circ} A_1^{-1} (B^{\circ})^T \Big )$ lie in the interval 
    \begin{align}
            \Big[ C_4 + \epsilon + \mathcal{O}(N \rho^2), \; C_5 + \epsilon \Big],
            \label{lem:b-bound-1}
    \end{align}
     where
    \begin{align}
    \label{C4C5}
        C_4 = \frac{\beta^2 \gamma^2  \lambda_{\min}(M_p^\circ)}
        {\lambda_{\max}(M_p^\circ) + \gamma^2 \lambda_{\min}(M_p^\circ)},
         \quad
         C_5  = d + \frac{\beta^2  \lambda_{\max}(M_p^\circ)}
        {\lambda_{\max}(M_p^\circ) + \gamma^2 \lambda_{\min}(M_p^\circ)}.
    \end{align}
\end{lem}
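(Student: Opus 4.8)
The plan is to remove the trivial summand and reduce to an equivalent symmetric eigenvalue problem. Since the matrix of interest is $\epsilon I + (M_p^\circ)^{-1}(\rho\V{w}\V{w}^T + B^\circ A_1^{-1}(B^\circ)^T)$, its eigenvalues are $\epsilon$ plus the eigenvalues of the SPD matrix $\tilde M := \rho\,\tilde{\V{w}}\tilde{\V{w}}^T + \tilde B$ obtained by conjugating with $(M_p^\circ)^{-1/2}$, where $\tilde{\V{w}} := (M_p^\circ)^{-1/2}\V{w}$ and $\tilde B := (M_p^\circ)^{-1/2}B^\circ A_1^{-1}(B^\circ)^T(M_p^\circ)^{-1/2}$. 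I would first record three facts. By Lemma~\ref{lem:B0-1}, $\tilde B$ is positive semidefinite with one-dimensional kernel spanned by $\V{v}_0 := (M_p^\circ)^{1/2}\V{1}/\|(M_p^\circ)^{1/2}\V{1}\|$, and $\lambda_{\max}(\tilde B)\le d$ by \eqref{AB-1}. Since $\V{w} = M_p^\circ\V{1}/\|M_p^\circ\V{1}\|$, the vector $\tilde{\V{w}}$ is a positive multiple of $(M_p^\circ)^{1/2}\V{1}$, hence exactly parallel to $\V{v}_0$, with $\|\tilde{\V{w}}\|^2 = \gamma^2/(\V{1}^T M_p^\circ\V{1})$, so that $\gamma^2/\lambda_{\max}(M_p^\circ)\le\|\tilde{\V{w}}\|^2\le 1/\lambda_{\min}(M_p^\circ)$. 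Finally, the discrete inf-sup condition of Lemma~\ref{inf_sup}, which gives $\V{p}^T B^\circ A_1^{-1}(B^\circ)^T\V{p}\ge\beta^2\V{p}^T M_p^\circ\V{p}$ for every $\V{p}$ that is $M_p^\circ$-orthogonal to $\V{1}$, becomes after the congruence the bound $\V{t}^T\tilde B\V{t}\ge\beta^2\|\V{t}\|^2$ for all $\V{t}\perp\V{v}_0$.

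The upper bound is then immediate: by Weyl's inequality $\lambda_{\max}(\tilde M)\le\rho\|\tilde{\V{w}}\|^2 + \lambda_{\max}(\tilde B)\le\rho/\lambda_{\min}(M_p^\circ) + d$, and inserting the value \eqref{rho-1} of $\rho$ gives $\rho/\lambda_{\min}(M_p^\circ) = \beta^2\lambda_{\max}(M_p^\circ)/(\lambda_{\max}(M_p^\circ)+\gamma^2\lambda_{\min}(M_p^\circ))$, so $\lambda_{\max}(\tilde M)\le C_5$ and every eigenvalue of the original matrix is at most $C_5+\epsilon$.

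For the lower bound I would take a unit vector $\V{x} = s\V{v}_0 + \V{t}$ with $\V{t}\perp\V{v}_0$ and $s^2+\|\V{t}\|^2 = 1$. Using $\tilde B\V{v}_0 = \V{0}$ and $\tilde{\V{w}}\parallel\V{v}_0$, one obtains $\V{x}^T\tilde M\V{x} = \rho\|\tilde{\V{w}}\|^2 s^2 + \V{t}^T\tilde B\V{t}\ge\min(\rho\|\tilde{\V{w}}\|^2,\beta^2)$, and since $\rho\|\tilde{\V{w}}\|^2\ge\rho\gamma^2/\lambda_{\max}(M_p^\circ) = C_4$ by \eqref{rho-1} while $C_4<\beta^2$, this yields $\lambda_{\min}(\tilde M)\ge C_4$, hence the claimed lower bound $C_4+\epsilon$; the $\mathcal{O}(N\rho^2)$ remainder in \eqref{lem:b-bound-1} is what appears if, following \cite[Lemma~4.2]{Huang-Wang-2025-Stokes-reg}, one instead treats $\tilde{\V{w}}$ as only approximately aligned with $\V{v}_0$ and carries the cross terms explicitly, and the two statements are consistent. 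The main obstacle is precisely this lower bound and, within it, the verification that the rank-one regularization restores coercivity along exactly the kernel direction of $B^\circ A_1^{-1}(B^\circ)^T$: this is where the explicit choice \eqref{rho-1} of $\rho$ is essential, since it is tuned so that the gain $C_4 = \rho\gamma^2/\lambda_{\max}(M_p^\circ)$ in the lower bound and the excess $C_5-d = \rho/\lambda_{\min}(M_p^\circ)$ in the upper bound add up to the inf-sup constant $\beta^2$, the natural split keeping both bounds bounded away from $0$ and from blowing up; the alignment fact $\tilde{\V{w}}\parallel\V{v}_0$ and the evaluation of $\|\tilde{\V{w}}\|^2$ in terms of $\gamma$ are the real work, whereas \eqref{AB-1} and the inf-sup estimate on $\V{v}_0^\perp$ follow at once from Lemmas~\ref{lem:S} and \ref{inf_sup}.
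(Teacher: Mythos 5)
Your proof is correct, and it is in fact a self-contained sharpening of the paper's argument. The paper disposes of the lemma by invoking Lemma~5.1 of \cite{Huang-Wang-2025-Stokes-reg}, which handles a rank-one perturbation $\rho \V{w}\V{w}^T$ for a \emph{general} unit vector $\V{w}$; because $(M_p^\circ)^{-1/2}\V{w}$ and the kernel of $\tilde B := (M_p^\circ)^{-1/2} B^\circ A_1^{-1}(B^\circ)^T (M_p^\circ)^{-1/2}$ are only approximately aligned in that general setting, the transplanted bound carries an $\mathcal{O}(N\rho^2)$ remainder. Your key observation is that for the particular $\V{w} = M_p^\circ \V{1}/\|M_p^\circ\V{1}\|$ used here, the vector $\tilde{\V{w}} = (M_p^\circ)^{-1/2}\V{w}$ is a positive multiple of $(M_p^\circ)^{1/2}\V{1}$ and therefore \emph{exactly} spans $\mathrm{Null}(\tilde B)$. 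This makes $\tilde M = \rho\tilde{\V{w}}\tilde{\V{w}}^T + \tilde B$ block-diagonal with respect to $\mathrm{span}(\V{v}_0)\oplus \V{v}_0^\perp$: the first block is the scalar $\rho\|\tilde{\V{w}}\|^2$, and the second is $\tilde B|_{\V{v}_0^\perp}$, whose spectrum lies in $[\beta^2, d]$ by the inf-sup condition of Lemma~\ref{inf_sup} and by \eqref{AB-1}. The lower bound $C_4$ then comes out exactly, with no $\mathcal{O}(N\rho^2)$ term, and your Weyl-inequality upper bound reproduces $C_5+\epsilon$; both are consistent with, and in fact tighter than, the statement as written. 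The computation $\|\tilde{\V{w}}\|^2 = \gamma^2/(\V{1}^T M_p^\circ\V{1}) \ge \gamma^2/\lambda_{\max}(M_p^\circ)$ and the insertion of the value \eqref{rho-1} of $\rho$ into both endpoints are verified. One small imprecision worth noting: you invoke Lemma~\ref{inf_sup} as giving $\V{p}^T B^\circ A_1^{-1}(B^\circ)^T\V{p}\ge\beta^2\V{p}^T M_p^\circ\V{p}$ for $\V{p}$ that is $M_p^\circ$-orthogonal to $\V{1}$, which is the correct reading (the inf-sup cannot hold for constant pressures since $\V{1}^T B^\circ = 0$, and $\|\V{p}_h\|$ there denotes the $L^2$ norm of the discrete pressure, i.e., the $M_p^\circ$-weighted vector norm); the paper's statement of Lemma~\ref{inf_sup} leaves this implicit, so you are right to spell it out. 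Also, the paper's proof of this particular lemma cites Lemma~5.1 (not Lemma~4.2, which is cited for the earlier elasticity counterpart Lemma~\ref{lem:eigen_bound}); this is a cosmetic discrepancy only.
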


\begin{proof}
Following the proof of \cite[Lemma~5.1]{Huang-Wang-2025-Stokes-reg},
we obtain the bounds for the eigenvalues $\lambda_1 \le \lambda_2 \le \cdots \le \lambda_N$ of
$(M_p^\circ)^{-1} \Big ( \epsilon M_p^{\circ} + \rho \V{w}\V{w}^T + B^{\circ} A_1^{-1} (B^{\circ})^T \Big )$ as  
\begin{align}
& \lambda_1 = \epsilon + \frac{\gamma^2 N}{|\Omega|} \rho + \mathcal{O}(N \rho^2)
\geq \epsilon + \frac{\gamma^2}{\lambda_{\max}(M_p^{\circ})}\rho + \mathcal{O}(N \rho^2) ,
\label{lem:b-bound-2}
\\
&   \epsilon + \beta^2 - \frac{\rho}{\lambda_{\min}(M_p^{\circ})} \le \lambda_i \le \epsilon +  d + \frac{\rho}{\lambda_{\min}(M_p^{\circ})},
    \quad i = 2, ..., N.
    \label{lem:b-bound-3}
\end{align}
With the choice (\ref{rho-1}), we have
\begin{align*}
    \epsilon + \frac{\gamma^2}{\lambda_{\max}(M_p^{\circ})}\rho = \epsilon + \beta^2 - \frac{\rho}{\lambda_{\min}(M_p^{\circ})} = \epsilon + C_4.
\end{align*}
Using \eqref{rho-1} and \eqref{lem:b-bound-3}, we can obtain the upper bound in \eqref{lem:b-bound-1}.
\end{proof}

Notice that for quasi-uniform meshes, $\rho = \mathcal{O}(h^d)$,
$C_4$ and $C_5$ are constants, and $\mathcal{O}(N \rho^2) = \mathcal{O}(h^d)$.
Moreover, from the proof we can see that it is not necessary to choose $\rho$
exactly as in (\ref{rho-1}). Similar results still hold when we take
$\rho$ to satisfy
\begin{align}
\rho < \beta^2 \lambda_{\min}(M_p^{\circ}), \quad 
\rho \sim \lambda_{\min}(M_p^{\circ}).
\label{rho-2}
\end{align}
In our computation, we take $\rho = 0.1 \,\lambda_{\min}(M_p^{\circ})$.

\begin{lem}
\label{lem:eigen_bound_3field}
The eigenvalues of $\hat{\tilde{S_3}}^{-1} \tilde{S_3}$ lie in
\begin{align}
   \Bigg[ \frac{C_4}{1+C_4} \cdot  \frac{\frac{\mu}{\alpha^2} \lambda_{\min} (\tilde{D})}{\rho + \epsilon \lambda_{\max}(M_p^{\circ})
   + \frac{\mu}{\alpha^2} \lambda_{\min} (\tilde{D})} + \mathcal{O}(\epsilon) + \mathcal{O}(N \rho^2), \quad 1+ C_5 +\epsilon\Bigg],
    \label{lem:eigen_bound_3field-1}
\end{align}
where $C_4$ and $C_5$ are defined in (\ref{C4C5}).
\end{lem}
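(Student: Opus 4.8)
The plan is to convert the generalized eigenvalue problem $\tilde{S_3}\, v = \lambda\, \hat{\tilde{S_3}}\, v$ into a scalar quadratic equation in $\lambda$ by exploiting the $2\times 2$ block structure. Write the (shared) $(1,1)$ block as $E = \frac{\mu}{\alpha^2}\tilde{D} + \epsilon M_p^{\circ} + \rho\, \V{w}\V{w}^T$, the off-diagonal block of $\tilde{S_3}$ as $G = \epsilon M_p^{\circ} + \rho\, \V{w}\V{w}^T$, and the $(2,2)$ block of $\tilde{S_3}$ as $F = \epsilon M_p^{\circ} + \rho\, \V{w}\V{w}^T + B^{\circ} A_1^{-1}(B^{\circ})^T$, so that $E = \frac{\mu}{\alpha^2}\tilde{D} + G$, $F = G + B^{\circ}A_1^{-1}(B^{\circ})^T$, and $F$ is exactly the matrix whose $M_p^{\circ}$-preconditioned eigenvalues are estimated in Lemma~\ref{lem:b-bound}. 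First I would note that $\hat{\tilde{S_3}}$ is SPD and that $\tilde{S_3}$ is SPD via the identity $v^T\tilde{S_3}v = \frac{\mu}{\alpha^2}x^T\tilde{D}x + (x+y)^TG(x+y) + y^TB^{\circ}A_1^{-1}(B^{\circ})^Ty$ for $v=(x,y)$; thus $\hat{\tilde{S_3}}^{-1}\tilde{S_3}$ has real, positive eigenvalues. Writing the eigenproblem block-wise, one checks that $\lambda=1$ is not an eigenvalue and that every eigenvector has $y\neq 0$; then solving the first row for $x$, substituting into the second, and pairing with $y$ gives
\[
\lambda^2\,(y^T M_p^{\circ} y) - \lambda\, y^T(M_p^{\circ}+F)y + y^T(F - G E^{-1} G)y = 0 .
\]
Since $E\ge G$ yields $G E^{-1} G \le G$, hence $F - GE^{-1}G \ge F - G = B^{\circ}A_1^{-1}(B^{\circ})^T \ge 0$, both roots $\lambda_{\pm}(y)$ are real and nonnegative, and each eigenvalue of $\hat{\tilde{S_3}}^{-1}\tilde{S_3}$ equals $\lambda_{+}(y)$ or $\lambda_{-}(y)$ for some $y\neq 0$; consequently it suffices to bound $\lambda_{+}(y)$ from above and $\lambda_{-}(y)$ from below uniformly in $y$.

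For the upper bound, nonnegativity of the constant coefficient of the quadratic gives $\lambda_{+}(y)\le \frac{y^T(M_p^{\circ}+F)y}{y^T M_p^{\circ} y} = 1 + \frac{y^T F y}{y^T M_p^{\circ} y} \le 1 + C_5 + \epsilon$, where the last step uses the upper eigenvalue bound of Lemma~\ref{lem:b-bound}; this is the right endpoint of \eqref{lem:eigen_bound_3field-1}.

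For the lower bound I would use that the product of the roots equals $\frac{y^T(F-GE^{-1}G)y}{y^T M_p^{\circ} y}$, together with the above estimate on $\lambda_{+}(y)$, to obtain $\lambda_{-}(y)\ge \frac{y^T(F - GE^{-1}G)y}{y^T(M_p^{\circ}+F)y}$. The key ingredient is the sharpened bound $G E^{-1} G \le \theta\, G$ with $\theta = \frac{\rho + \epsilon\lambda_{\max}(M_p^{\circ})}{\frac{\mu}{\alpha^2}\lambda_{\min}(\tilde{D}) + \rho + \epsilon\lambda_{\max}(M_p^{\circ})}$, which follows from $E = \frac{\mu}{\alpha^2}\tilde{D} + G \ge \frac{1}{\theta}G$, that is, from $\lambda_{\min}\!\big(\tfrac{\mu}{\alpha^2}\tilde{D}\big) = \tfrac{\mu}{\alpha^2}\lambda_{\min}(\tilde{D}) \ge \big(\tfrac{1}{\theta}-1\big)\lambda_{\max}(G)$ together with $\lambda_{\max}(G)\le \rho+\epsilon\lambda_{\max}(M_p^{\circ})$ (using $\|\V{w}\|=1$). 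Hence $y^T(F-GE^{-1}G)y \ge (1-\theta)\,y^TGy + y^TB^{\circ}A_1^{-1}(B^{\circ})^Ty$. Setting $s = \frac{y^TGy}{y^TM_p^{\circ}y}\ge 0$ and $t = \frac{y^TB^{\circ}A_1^{-1}(B^{\circ})^Ty}{y^TM_p^{\circ}y}\ge 0$, the lower bound of Lemma~\ref{lem:b-bound} gives $s+t = \frac{y^TFy}{y^TM_p^{\circ}y} \ge C_4 + \epsilon + \mathcal{O}(N\rho^2) =: \sigma$, so $\lambda_{-}(y) \ge \frac{(1-\theta)s+t}{1+s+t}$. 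A short calculus argument — the ratio is increasing in $t$, increasing in $s$ for $s\ge\sigma$, and decreasing in $s$ for $0\le s<\sigma$ along the constraint $s+t=\sigma$ — shows that over $\{s,t\ge 0,\ s+t\ge\sigma\}$ its infimum equals $\frac{(1-\theta)\sigma}{1+\sigma}$, attained at $s=\sigma$, $t=0$. Since $1-\theta = \frac{\frac{\mu}{\alpha^2}\lambda_{\min}(\tilde{D})}{\rho+\epsilon\lambda_{\max}(M_p^{\circ})+\frac{\mu}{\alpha^2}\lambda_{\min}(\tilde{D})}$ and $\frac{\sigma}{1+\sigma} = \frac{C_4}{1+C_4} + \mathcal{O}(\epsilon) + \mathcal{O}(N\rho^2)$, this yields the left endpoint of \eqref{lem:eigen_bound_3field-1}.

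The step I expect to be the main obstacle is the lower bound. The reduction to the scalar quadratic (SPD-ness of $\tilde{S_3}$, positivity of $F-GE^{-1}G$, exclusion of $\lambda=1$) is routine, but obtaining the precise constant so that the factors $\frac{C_4}{1+C_4}$ and $1-\theta$ appear exactly as in \eqref{lem:eigen_bound_3field-1} requires care: one must track how the regularization scale $\rho=\mathcal{O}(h^d)$ enters both $\sigma$ and $\theta$, keep the $\mathcal{O}(\epsilon)$ and $\mathcal{O}(N\rho^2)$ remainders consistent with Lemma~\ref{lem:b-bound}, and verify that the minimizing configuration $s=\sigma$, $t=0$ is genuinely attainable — it corresponds to the constant direction $y=\V{1}$, on which $B^{\circ}A_1^{-1}(B^{\circ})^T$ degenerates, which is precisely the delicate near-null-space case that Lemma~\ref{lem:b-bound} is designed to control.
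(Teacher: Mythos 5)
Your proposal is correct and follows essentially the same route as the paper's proof: reduce the generalized eigenproblem to the scalar quadratic $\lambda^2-(1+b)\lambda+(b-a)=0$ with $a,b$ the Rayleigh quotients of $GE^{-1}G$ and $F$ against $M_p^{\circ}$, get $\lambda_+\le 1+b\le 1+C_5+\epsilon$ from Lemma~\ref{lem:b-bound}, and get $\lambda_-\ge (b-a)/(1+b)$ with the factor $1-\theta$ coming from $E\ge\theta^{-1}G$ (the paper phrases this as $a/b\le\theta$ via the minimum eigenvalue of $G^{-1}E$, you phrase it as $GE^{-1}G\le\theta G$ and then optimize over $s,t$; the resulting constant is identical). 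The only difference is your slightly more explicit bookkeeping of $b=s+t$, which is harmless.
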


\begin{proof}
Consider the eigenvalue problem associated with $\hat{\tilde{S_3}}^{-1} \tilde{S_3}$,
\begin{align*}
&    \displaystyle
       \Big( \frac{\mu}{ \alpha^2 }\tilde{D}+\epsilon M_p^{\circ} + \rho \V{w}\V{w}^T \Big) \V{p}_h + \Big(\epsilon M_p^{\circ} + \rho \V{w}\V{w}^T\Big) \V{q}_h = \lambda \Big(\frac{\mu}{\alpha^2} \tilde{D}+\epsilon M_p^{\circ} + \rho \V{w}\V{w}^T\Big)\V{p}_h ,
       \\[0.1in]
&        \displaystyle
       \Big(\epsilon M_p^{\circ} + \rho \V{w}\V{w}^T\Big) \V{p}_h + \Big(\epsilon M_{p}^{\circ} + \rho \V{w}\V{w}^T + B^{\circ} A_1^{-1} (B^{\circ})^T\Big)\V{q}_h = \lambda M_p^{\circ} \V{q}_h.
\end{align*}
Since $\lambda = 1$ is contained in the interval (\ref{lem:eigen_bound_3field-1}), we 
assume that $\lambda \neq 1$ in the following analysis. Solving the first equation for $\V{p}_h$, we get
\begin{align*}
    \V{p}_h = \frac{ 1}{\lambda-1} \Big( \frac{\mu}{\alpha^2} \tilde{D} + \epsilon M_p^{\circ} + \rho \V{w}\V{w}^T\Big)^{-1}
    \Big(\epsilon M_p^{\circ} + \rho \V{w} \V{w}^T \Big)\V{q}_h.
\end{align*}
Substituting this into the second equation, we obtain
\begin{align*}
\displaystyle
   & \frac{\V{q}_h^T \Big( \epsilon  M_p^{\circ} + \rho \V{w}\V{w}^T \Big) \Big( \frac{\mu}{\alpha^2} \tilde{D} + \epsilon M_p^{\circ} + \rho \V{w}\V{w}^T\Big)^{-1}  \Big( \epsilon  M_p^{\circ} + \rho \V{w}\V{w}^T \Big)\V{q}_h}{\V{q}_h^T M_p^{\circ}  \V{q}_h }
   \notag \\
   & \quad \quad \qquad
    + (\lambda - 1)
    \frac{\V{q}_h^T \Big( \epsilon  M_p^{\circ} + \rho \V{w}\V{w}^T +  B^{\circ} A_1^{-1} (B^{\circ})^T\Big)\V{q}_h}{\V{q}_h^T M_p^{\circ} \V{q}_h}
    =
    \lambda (\lambda-1).
\end{align*}
Denoting
\begin{align*}
    & a =  \frac{\V{q}_h^T \Big( \epsilon  M_p^{\circ} + \rho \V{w}\V{w}^T \Big) \Big( \frac{\mu}{\alpha^2} \tilde{D} + \epsilon M_p^{\circ} + \rho \V{w}\V{w}^T\Big)^{-1} \Big( \epsilon  M_p^{\circ} + \rho \V{w}\V{w}^T \Big)\V{q}_h}{\V{q}_h^T M_p^{\circ}  \V{q}_h }, \notag
    \\
    & b =  \frac{\V{q}_h^T \Big( \epsilon  M_p^{\circ} + \rho \V{w}\V{w}^T +  B^{\circ} A_1^{-1} (B^{\circ})^T\Big)\V{q}_h}{\V{q}_h^T M_p^{\circ} \V{q}_h} ,
\end{align*}
we can rewrite the above equation as
\begin{align*}
    \lambda^2 - (1+b) \lambda + (b-a) = 0,
\end{align*}
which has the roots
\begin{align*}
    \lambda_{+} = \frac{(1+b) + \sqrt{(1+b)^2 - 4(b-a)}}{2},\quad
    \lambda_{-} = \frac{(1+b) - \sqrt{(1+b)^2 - 4(b-a)}}{2}.
\end{align*}
Since both $\hat{\tilde{S_3}}$ and $\tilde{S_3}$ are symmetric,
$\lambda_{-}$ and $\lambda_{+}$ are real. Moreover, it is not difficult to show that $a \le b$.
Then, we have
\begin{align}
\frac{b-a}{1+b} \le \lambda_{-} \le \lambda_{+} \le 1 + b.
\label{eigen_bound-eq-1}
\end{align}
Notice that the lower and upper bounds for $b$ can be obtained from Lemma~\ref{lem:b-bound}, i.e.,
\begin{align}
C_4 + \epsilon + \mathcal{O}(N \rho^2) \le b \le C_5 + \epsilon .
\label{eigen_bound-eq-2}
\end{align}

To obtain a lower bound for $\lambda_{-}$, we want to get an upper bound for $a/b$,
which can be obtained by establishing an upper bound of the largest eigenvalue of the matrix
\[
\Big( \epsilon  M_p^{\circ} + \rho \V{w}\V{w}^T \Big)^{\frac{1}{2}} \Big( \frac{\mu}{\alpha^2} \tilde{D} + \epsilon M_p^{\circ} + \rho \V{w}\V{w}^T\Big)^{-1} \Big( \epsilon  M_p^{\circ} + \rho \V{w}\V{w}^T \Big)^{\frac{1}{2}},
\]
or the reciprocal of the minimum eigenvalue of the matrix
\[
\Big( \epsilon  M_p^{\circ} + \rho \V{w}\V{w}^T \Big)^{-1} \Big( \frac{\mu}{\alpha^2} \tilde{D} + \epsilon M_p^{\circ} + \rho \V{w}\V{w}^T\Big).
\]
Thus, we have
\begin{align}
\frac{a}{b} \le \frac{1}{1 + \frac{\mu}{\alpha^2} \frac{\lambda_{\min}(\tilde{D})}{\rho + \epsilon \lambda_{\max}(M)}}
    =  \frac{ \rho + \epsilon \lambda_{\max}(M_p^{\circ})}{ \rho +
    \epsilon \lambda_{\max}(M_p^{\circ}) + \frac{\mu}{\alpha^2} \lambda_{\min} (\tilde{D})}.
    \label{eigen_bound-eq-3}
\end{align}
Using (\ref{eigen_bound-eq-1}), (\ref{eigen_bound-eq-2}), and (\ref{eigen_bound-eq-3}), we obtain
\eqref{lem:eigen_bound_3field-1}.
\end{proof}

For quasi-uniform meshes, we have ${\lambda_{\max}(M_p^{\circ})}
= \mathcal{O}(h^d)$, ${\lambda_{\min}(M_p^{\circ})}
= \mathcal{O}(h^d)$, and ${\lambda_{\min}(\tilde{D})}
= c_0\mathcal{O}(h^d) + \kappa\Delta t \mathcal{O}(h^d)$ (e.g., see \cite{KamenskiHuangXu_2014}).
Moreover, $C_4$ and $C_5$ are constants when $\rho$ is chosen as in (\ref{rho-1}).
In this case, Lemma~\ref{lem:eigen_bound_3field} implies that
the eigenvalues of $\hat{\tilde{S_3}}^{-1} \tilde{S_3}$ are bounded above and below essentially by positive constants.

We can use MINRES with a block diagonal preconditioner $\tilde{\mathcal{P}}_{d,3}$
and GMRES with a block triangular preconditioner $\tilde{\mathcal{P}}_{t,3}$
for solving (\ref{3field_eqn3}), where
\begin{align}
    & \tilde{\mathcal{P}}_{d,3} = 
    \begin{bmatrix}
        A_1 & 0 \\ 0 & \hat{\tilde{S}}_3 
    \end{bmatrix}
    = 
    \begin{bmatrix}
        A_1 & 0 & 0\\
        0 &         \frac{\mu}{\alpha^2} \tilde{D}+\epsilon M_p^{\circ} + \rho \V{w}\V{w}^T & 0 \\
   0 & 0 & M_{p}^{\circ}
    \end{bmatrix},
    \label{3field-diag-0}
\\
& \tilde{\mathcal{P}}_{t,3} =
    \begin{bmatrix}
        A_1 & 0 \\ \begin{bmatrix} 0 \\ - B^{\circ} \end{bmatrix}
        & - \hat{\tilde{S}}_3 
    \end{bmatrix}
    = 
    \begin{bmatrix}
        A_1 & 0 & 0\\
        0 &  - \frac{\mu}{\alpha^2} \tilde{D}-\epsilon M_p^{\circ} - \rho \V{w}\V{w}^T  & 0 \\
    -B^{\circ} & 0 & -M_{p}^{\circ}
    \end{bmatrix} .
    \label{3field-tri-0}
\end{align}
The analysis of the convergence of MINRES and GMRES with these block preconditioners
for (\ref{3field_eqn3}) is similar to that described in Sections~\ref{sec:poro-MINRES}
and \ref{sec:poro-GMRES}. The detail of the analysis is omitted here to save space.
It is emphasized that, using Lemma~\ref{lem:eigen_bound_3field}, we can show that,
for quasi-uniform meshes, the convergence (including the convergence factor and
the asymptotic convergence constant) of preconditioned MINRES and GMRES
is essentially independent of $h$, $\epsilon$ (or $\lambda$), and $\kappa \Delta t$.

\subsection{Implementation}
\label{sec:poro3-implementation}

Notice that the analysis in the previous subsection is for the reduced system \eqref{3field_eqn3}.
It is more convenient to implement preconditioned MINRES and GMRES on the original system
including $\V{p}_h^{\partial}$, i.e., 
\begin{equation}
    \mathcal{A}_3
    \begin{bmatrix}
        \mathbf{u}_h \\[0.05in]
        \frac{\alpha}{\mu}\mathbf{p}_h \\[0.05in]
        \frac{1}{\epsilon} \mathbf{w_h} - \frac{\alpha}{\mu}\mathbf{p}_h^{\circ}
    \end{bmatrix}
    =
     \begin{bmatrix}
       \frac{1}{\mu} \mathbf{b}_1 \\[0.05in]
       \frac{1}{\alpha} \mathbf{b}_2 \\[0.05in]
       0
    \end{bmatrix} ,
    \label{3field_eqn4}
\end{equation}
where
\begin{align*}
        \qquad 
    \mathcal{A}_3 = \begin{bmatrix}
      A_1  &  0 & -(B^{\circ})^T\\[0.1in]
     \displaystyle  0 &-\frac{\mu}{\alpha^2}D - \epsilon \begin{pmatrix}
         M_p^{\circ} & 0 \\
         0 & 0
     \end{pmatrix}- \begin{pmatrix}
         \rho \V{w}\V{w}^T & 0 \\
         0 & 0
     \end{pmatrix}
     & -\epsilon \begin{pmatrix} M_p^{\circ}\\ 0 \end{pmatrix} - \begin{pmatrix}\rho \V{w}\V{w}^T\\ 0 \end{pmatrix} \\[0.1in]
     \displaystyle  -B^{\circ} & -\epsilon \begin{pmatrix} M_p^{\circ} & 0 \end{pmatrix} - \begin{pmatrix} \rho\V{w}\V{w}^T & 0 \end{pmatrix} & -\epsilon M_{p}^{\circ} - \rho \V{w}\V{w}^T
    \end{bmatrix} .
\end{align*}
As in Section~\ref{sec:poro} for the two-field formulation, we can find 
the Schur complement $S_3$ and its approximation $\hat{S}_3$ (corresponding to (\ref{approxSchur-3field})) as
\begin{align}
     &S_3  = 
         \begin{bmatrix}
           \displaystyle  \frac{\mu}{\alpha^2}D  +\epsilon \begin{pmatrix}
         M_p^{\circ} & 0 \\
         0 & 0
     \end{pmatrix}+ \begin{pmatrix}
         \rho \V{w}\V{w}^T & 0 \\
         0 & 0
     \end{pmatrix} & \epsilon \begin{pmatrix} M_p^{\circ} \\ 0 \end{pmatrix} + \begin{pmatrix} \rho\V{w}\V{w}^T \\ 0 \end{pmatrix} \\[0.1in]
     \displaystyle  \epsilon \begin{pmatrix} M_p^{\circ} & 0 \end{pmatrix} + \begin{pmatrix} \rho\V{w}\V{w}^T & 0 \end{pmatrix}  & \epsilon M_{p}^{\circ} + \rho \V{w}\V{w}^T + B^{\circ} A_1^{-1} (B^{\circ})^T
       \end{bmatrix} ,
       \label{S3}
       \\[0.2in]
       & \hat{S}_3 = 
       \begin{bmatrix}
           \displaystyle  \frac{\mu}{\alpha^2}D  +\epsilon \begin{pmatrix}
         M_p^{\circ} & 0 \\
         0 & 0
     \end{pmatrix}+ \begin{pmatrix}
         \rho \V{w}\V{w}^T & 0 \\
         0 & 0
     \end{pmatrix} & 0 \\[0.1in]
     \displaystyle  0&  M_{p}^{\circ} 
       \end{bmatrix} .
\label{hatS3}
\end{align} 
The corresponding block diagonal and triangular preconditioners become
\begin{align}
&    \mathcal{P}_{d,3} = 
    \begin{bmatrix}
        A_1 & 0 & 0\\
        0 &         \displaystyle \frac{\mu}{\alpha^2}D +\epsilon \begin{pmatrix}
         M_p^{\circ} & 0 \\ 0 & 0
    \end{pmatrix} +  \begin{pmatrix}
         \rho \V{w}\V{w}^T & 0 \\ 0 & 0
    \end{pmatrix} & 0 \\
   0 & 0 & M_{p}^{\circ}
    \end{bmatrix},
    \label{3field-diag}
\\
&     \mathcal{P}_{t,3} = 
    \begin{bmatrix}
        A_1 & 0 & 0\\
        0 &  \displaystyle - \frac{\mu}{\alpha^2}D - \epsilon \begin{pmatrix}
         M_p^{\circ} & 0 \\ 0 & 0
    \end{pmatrix} -  \begin{pmatrix}
         \rho \V{w}\V{w}^T & 0 \\
         0 & 0 \end{pmatrix}   & 0 \\
    -B^{\circ} & 0 & -M_{p}^{\circ}
    \end{bmatrix} .
    \label{3field-tri}
\end{align}
As in Section~\ref{sec:poro}, it can be shown that the convergence of MINRES for $\mathcal{P}_{d,3}^{-1}\mathcal{A}_3$ and GMRES for $\mathcal{P}_{t,3}^{-1}\mathcal{A}_3$
is essentially independent of $h$, $\lambda$, and $\kappa \Delta t$.

For the implementation of $\mathcal{P}_{d,3}$ and $\mathcal{P}_{t,3}$, we need
to carry out the action of inversion of the diagonal blocks.
For $A_1$, this can be done using CG with an incomplete Cholesky decomposition of $A_1$.
For the middle block, this can be done also using CG but with an incomplete Cholesky
decomposition of
\[
\frac{\mu}{\alpha^2}D +\epsilon \begin{pmatrix}
         M_p^{\circ} & 0 \\ 0 & 0
    \end{pmatrix} .
\]
Our limited experience shows that this strategy works
well, with PCG converging typically just in a few iterations.
The inversion of $M_p^{\circ}$ is trivial since it is diagonal.

\section{Numerical experiments}
\label{SEC:numerical}

We present 2D and 3D numerical results in this section
for both elasticity and poroelasticity problems to demonstrate the performances of MINRES and GMRES
with the block preconditioning. 

For linear elasticity problems, we take the regularization constant as $\rho = 1$ and 
use MATLAB's functions {\em minres} and {\em gmres} with $tol = 10^{-10}$ for 2D
and $tol = 10^{-8}$ for 3D examples, a maximum of 1000 iterations, and the zero vector as the initial guess.
Moreover, $restart = 30$ is used with {\em gmres}.
The inversion of the leading block $A_1$ \eqref{A1-1} is computed using the conjugate gradient method, preconditioned with an incomplete Cholesky decomposition.
The latter is carried out using MATLAB's function {\em ichol} with threshold
dropping and a drop tolerance of $10^{-3}$.

For linear poroelasticity problems in the two-field formulation,
the tolerance for MATLAB's functions {\em minres} and {\em gmres}
is set to be $tol = 10^{-8}$ for both 2D and 3D examples.
Up to 1000 iterations are allowed, the initial guess is taken as the zero vector,
and {\em gmres} is used with a restart parameter of 30.
The implementation of block preconditioners $\mathcal{P}_d$ and $\mathcal{P}_t$ requires computing
the inverse action of the diagonal blocks. The inverse action of $D$ is carried out using
the conjugate gradient method preconditioned with an incomplete Cholesky decomposition
while the inversion of the leading block $(\epsilon A_1 + A_0)$ is performed like
solving a linear elasticity problem (as a nested iteration).
The tolerance for this nested iteration is $10^{-12}$ and the regularization constant is $\rho = 1$.

For linear poroelasticity problems in the three-field formulation, 
the tolerance for MATLAB's functions {\em minres} and {\em gmres}, 
the maximum number of iterations, zero initial guess,
and the restart parameter for {\em gmres} are set to be the same as for the two-field formulation.
The implementation of block preconditioners $\mathcal{P}_{d,3}$ and $\mathcal{P}_{t,3}$ 
is performed as discussed in Subsection~\ref{sec:poro3-implementation}.
The inversion of the leading and middle blocks is done using CG preconditioned with {\em ichol} 
with threshold dropping and a drop tolerance of $10^{-3}$.
The regularization constant is taken as
$\rho = 0.1\, \lambda_{\min}(M_p^{\circ})$, satisfying \eqref{rho-2}.

\subsection{Linear elasticity}
\label{Section_Lin2by2}
The 2D linear elasticity example is adopted from \cite{Yi_JCAM2019} and
the 3D example is an extension of the 2D example.
They are constructed to simulate a nearly incompressible system.
The right-hand side function for the 2D example is given as
\begin{align*}
\mathbf{f} & = 
  \begin{bmatrix}
        2 \mu \sin(x) \sin(y)
      \\ 
        2 \mu \cos(x) \cos(y) 
\end{bmatrix} 
\end{align*}
while that for the 3D example is given by
\begin{align*}
\mathbf{f} = 
  \begin{bmatrix}
    &\frac{\pi^2 (4 \mu + \lambda)}{\lambda+\mu} \sin(\pi x) \sin(\pi y) \sin(\pi z) 
    + 8 \pi^2 \mu (-1 + \cos(2\pi x)) \sin(2 \pi y) \sin(2 \pi z) 
    \\
    &+ 4 \pi^2 \mu \cos(2 \pi x) \sin(2 \pi y) \sin(2\pi z)
    - \pi^2 (\cos(\pi x) \cos(\pi y) \sin(\pi z) + \cos(\pi x) \sin(\pi y) \cos(\pi z))
    \\[0.08in]
   & \frac{\pi^2 (4 \mu + \lambda)}{\lambda+\mu} \sin(\pi x) \sin(\pi y) \sin(\pi z)
    + 16 \pi^2 \mu \sin (2\pi x) (1 - \cos(2\pi y))  \sin(2 \pi z) 
    \\
    &- 8 \pi^2 \mu \sin(2 \pi x) \cos(2 \pi y) \sin(2\pi z)
    - \pi^2 (\cos(\pi x) \cos(\pi y) \sin(\pi z) + \sin(\pi x) \cos(\pi y) \cos(\pi z))
    \\[0.08in]
    & \frac{\pi^2 (4 \mu + \lambda)}{\lambda+\mu} \sin(\pi x) \sin(\pi y) \sin(\pi z)
    + 8 \pi^2 \mu \sin(2 \pi x) \sin(2 \pi y) 
 (-1 + \cos(2\pi z)) 
    \\
    &
    + 4 \pi^2 \mu \sin(2 \pi x) \sin(2 \pi y) \cos(2\pi z)
    - \pi^2 (\cos(\pi x) \sin(\pi y) \cos(\pi z) + \sin(\pi x) \cos(\pi y) \cos(\pi z))
\end{bmatrix} .
\end{align*}
The parameters are taken as $\mu = 0.5$ and
$\lambda = 1 \text{ or } 10^4$ for both examples.

The number of iterations for preconditioned MINRES and GMRES to reach a specified
tolerance is listed Tables~\ref{Elas-2D} and \ref{Elas-3D} for the 2D and 3D examples,
respectively. Two values of $\lambda$, $1$ and $10^{4}$, are used.
The iteration number is relatively small for all cases.
while it stays almost constant as the mesh is refined for each of the cases.
The change in the number from $\lambda = 1$ and $\lambda = 10^{4}$ is small:
about 6 (2D) and 16 (3D) for MINRES and about 8 (2D) and 16 (3D) for GMRES.
Lastly, the number of MINRES is about twice as many as that of GMRES, which is
consistent with Propositions~\ref{pro:MINRES_conv} and \ref{pro:GMRES_conv}
where comparable bounds are given for $\|\V{r}_{2k}\|/\|\V{r}_0\|$ and
$\|\V{r}_{k}\|/\|\V{r}_0\|$, respectively.
The numerical results confirm the $h$- and $\lambda$-independence nature of
convergence of MINRES and GMRES with the corresponding inexact block diagonal and triangular
Schur complement preconditioners.

\begin{table}[htb!]
    \centering
        \caption{The 2D Example for linear elasticity: The number of MINRES and GMRES iterations required to reach convergence for preconditioned systems $\mathcal{P}_{d,e}^{-1} \mathcal{A}_e$ and $\mathcal{P}_{t,e}^{-1} \mathcal{A}_e$, respectively,
        with $\lambda = 1$ and $10^{4}$, with regularization.}
    \begin{tabular}{|c|c|c|c|c|c|c|}
        \hline
        & & \multicolumn{4}{c|}{$N$} \\ \cline{3-6}
       & $\lambda$ & 918 & 3680 & 14728 & 58608 \\ \hline
       MINRES & $1$ & 34 & 34 & 34 & 34 \\ 
        & $10^{4}$  & 42 & 40 & 40 & 40 \\ \hline 
       GMRES & $1$ & 18 & 19 & 19 & 19 \\ 
        & $10^{4}$  & 26 & 26 & 27 & 27 \\ \hline
    \end{tabular}
    \label{Elas-2D}
\end{table}

\begin{table}[htb!]
    \centering
        \caption{The 3D Example for linear elasticity: The number of MINRES and GMRES iterations required to reach convergence for preconditioned systems $\mathcal{P}_{d,e}^{-1} \mathcal{A}_e$ and $\mathcal{P}_{t,e}^{-1} \mathcal{A}_e$, respectively,
        with $\lambda = 1$ and $10^{4}$, with regularization.}
    \begin{tabular}{|c|c|c|c|c|c|c|}
        \hline
         & & \multicolumn{4}{c|}{$N$} \\ \cline{3-6}
       & $\lambda$ & 65171 & 526031 & 1777571 & 4217332 \\ \hline
       MINRES & $1$ & 38 & 40& 40& 42\\ 
        & $10^{4}$ & 52& 56& 56& 58\\ \hline 
       GMRES & $1$ & 20  &20  & 20 & 20 \\ 
        & $10^{4}$ & 35 &  36&  37& 36 \\ \hline
    \end{tabular}
    \label{Elas-3D}
\end{table}

\subsection{Linear poroelasticity in two-field formulation}
\label{Section_LinPoro2by2}

The 2D and 3D linear poroelasticity examples are modifications of an example
of \cite{SISCLeePierMarRog2019} and 
the right-hand side functions are given in 2D by
\begin{align*}
\mathbf{f} & = 
 - t \begin{bmatrix}
        -8 \pi^2 \mu \cos(2 \pi x) \sin(2 \pi y) - \frac{2 \pi^2 \mu}{\lambda + \mu} \sin(\pi x) \sin(\pi y) \\
      + 4 \pi^2 \mu \sin(2\pi y) 
      + \pi^2 \cos(\pi x+\pi y)
      + \alpha \pi \cos(\pi x)\sin(\pi y) 
      \\[0.08in]
        8 \pi^2 \mu \sin(2 \pi x) \cos(2 \pi y) - \frac{2 \pi^2 \mu}{\lambda + \mu} \sin(\pi x) \sin(\pi y) \\
      - 4 \pi^2 \mu \sin(2\pi x) 
      + \pi^2 \cos(\pi x+\pi y)
      + \alpha \pi \sin(\pi x)\cos(\pi y)
\end{bmatrix},
\\ 
s & = -c_0 \sin(\pi x)\sin(\pi y) 
+ \frac{\pi \alpha}{\lambda + \mu} \sin(\pi x+\pi y)
	  -t \kappa \Big(2 \pi^2 \sin(\pi x) \sin(\pi y)\Big) 
\end{align*}
and in 3D by
\begin{align*}
\mathbf{f} & = t
 \begin{bmatrix}
 4\mu \cos(2\pi x) \sin(2\pi y) \sin(2\pi z) \pi^2 + \frac{(4\mu + \lambda)}{(\mu + \lambda)} \sin(\pi x) \sin(\pi y) \sin(\pi z) \pi^2 \\
- \cos(\pi x) \cos(\pi y) \sin(\pi z) \pi^2 - \cos(\pi x) \sin(\pi y) \cos(\pi z) \pi^2 \\
+ 8\pi^2 \mu \left(-1 + \cos(2\pi x)\right) \sin(2\pi y) \sin(2\pi z) + \alpha \pi \cos(\pi x) \sin(\pi y) \sin(\pi z)
\\[0.08in]
- \pi^2 \cos(\pi x) \cos(\pi y) \sin(\pi z) + \frac{(4\mu + \lambda)}{(\mu + \lambda)} \sin(\pi x) \sin(\pi y) \sin(\pi z) \pi^2 \\
- \sin(\pi x) \cos(\pi y) \cos(\pi z) \pi^2 + 16\pi^2 \mu \sin(2\pi x) \left(1 - \cos(2\pi y)\right) \sin(2\pi z) \\
- 8\pi^2 \mu \sin(2\pi x) \cos(2\pi y) \sin(2\pi z) + \alpha \pi \sin(\pi x) \cos(\pi y) \sin(\pi z)
\\[0.08in]
 \frac{(4\mu + \lambda)}{(\mu + \lambda)} \sin(\pi x) \sin(\pi y) \sin(\pi z) \pi^2 + 4\pi^2 \mu \sin(2\pi x) \sin(2\pi y) \cos(2\pi z) \\
- \cos(\pi x) \sin(\pi y) \cos(\pi z) \pi^2 - \sin(\pi x) \cos(\pi y) \cos(\pi z) \pi^2 \\
+ 8\pi^2 \mu \left(-1 + \cos(2\pi z)\right) \sin(2\pi x) \sin(2\pi y) + \alpha \pi \sin(\pi x) \sin(\pi y) \cos(\pi z)
\end{bmatrix},
\\
s &= \frac{\alpha \pi}{\mu + \lambda} \Big( \cos(\pi x) \sin(\pi y) \sin(\pi z) + \sin(\pi x) \cos(\pi y) \sin(\pi z)
\\
& \qquad \qquad + \sin(\pi x) \sin(\pi y) \cos(\pi z) \Big)
 + (3 \pi^2 t \kappa + c_0)\sin(\pi x) \sin(\pi y) \sin(\pi z) .
\end{align*}
The parameters are taken as $c_0 = 1$, $\kappa = 1$,
$\lambda = 1 \text{ or } 10^4$, $\mu = 1$, and $\Delta t = 10^{-3},\; 10^{-6}$.
Since $\kappa$ has the same effect as $\Delta t$ in the block $D$ (see \eqref{D-2}), we consider
only changes in $\Delta t$ in our numerical experiments.

The number of iterations for preconditioned MINRES and GMRES to reach a specified
tolerance is listed in Tables~\ref{Poro-2D} and \ref{Poro-3D} for the 2D and 3D examples,
respectively. Here, we use $\Delta t = 10^{-3}$ and $10^{-6}$ and $\lambda = 1$ and $10^{4}$.
The results are consistent with Propositions~\ref{pro:MINRES_conv-poro} and \ref{pro:poro_GMRES_conv}, confirming the parameter-free convergence
of MINRES and GMRES with corresponding inexact block diagonal and triangular
Schur complement preconditioners.
Specifically, the number is small for all cases. It is almost constant
as the mesh is refined for each of the cases.
The number for MINRES is about twice as many as that for GMRES.
It is pointed out that the number of iterations
is larger for $\lambda = 1$ than that for $\lambda = 10^{4}$, for both $\Delta t= 10^{-3}$
and $10^{-6}$, MINRES and GMRES, and 2D and 3D.
This cannot be explained from the bounds in Propositions~\ref{pro:MINRES_conv-poro} and \ref{pro:poro_GMRES_conv}. Nevertheless, the number remains small for $\lambda = 1$.
Moreover, the numerical results confirm that the inexact block
diagonal and triangular Schur complement preconditioners
$\mathcal{P}_{d}$ and $\mathcal{P}_{t}$ are effective.




\begin{table}[htb!]
    \centering
    \caption{The 2D Example for linear poroelasticity in two-field formulation: The number of MINRES and GMRES iterations required to reach convergence for preconditioned systems $\mathcal{P}_{d}^{-1} \mathcal{A}$ and $\mathcal{P}_{t}^{-1} \mathcal{A}$, respectively, with $\lambda = 1$ and $10^{4}$, $\Delta t = 10^{-3} \text{ and } 10^{-6}$.}
        \begin{tabular}{|c|c|c|c|c|c|c|c|}
        \hline
        & & & \multicolumn{4}{c|}{$N$} \\ \cline{4-7}
       & $\Delta t$ & $\lambda$ & 918 & 3680 & 14728 & 58608 \\ \hline
       MINRES & $10^{-3}$ & $1$ & 16 & 16 & 16 & 16 \\ 
        & $10^{-3}$ & $10^{4}$  & 5 & 5 & 5 & 5 \\
         & $10^{-6}$ & $1$ & 14 & 15 & 15 & 16 \\ 
        & $10^{-6}$ & $10^{4}$  & 5 & 5 & 5 & 5 \\
       \hline 
       GMRES & $10^{-3}$ & $1$ & 8 & 8 & 8 & 8 \\ 
        & $10^{-3}$ & $10^{4}$  & 3 & 3 & 3 & 3 \\ 
        & $10^{-6}$ & $1$ & 7 & 7 & 7 & 7 \\ 
        & $10^{-6}$ & $10^{4}$ & 3 & 3 & 3 & 3 \\ \hline
    \end{tabular}
%
    \label{Poro-2D}
\end{table}










\begin{table}[htb!]
    \centering
    \caption{The 3D Example for linear poroelasticity in two-field formulation: The number of MINRES and GMRES iterations required to reach convergence for preconditioned systems $\mathcal{P}_{d}^{-1} \mathcal{A}$ and $\mathcal{P}_{t}^{-1} \mathcal{A}$, respectively, with $\lambda = 1$ and $10^{4}$, $\Delta t = 10^{-3} \text{ and } 10^{-6}$.}
           \begin{tabular}{|c|c|c|c|c|c|c|c|}
        \hline
        & & & \multicolumn{4}{c|}{$N$} \\ \cline{4-7}
       & $\Delta t$ & $\lambda$ & 65171 & 526031 & 1777571 & 4217332\\ \hline
       MINRES & $10^{-3}$ & $1$ & 16 & 16 & 16  & 16 \\ 
        & $10^{-3}$ & $10^{4}$ &  5&  5&  5   & 5\\
         & $10^{-6}$ & $1$ & 16 & 16 & 16   &16 \\ 
        & $10^{-6}$ & $10^{4}$ &5 &5  & 5   &5 \\
       \hline 
       GMRES & $10^{-3}$ & $1$ &8 &8  &  8  & 8\\ 
        & $10^{-3}$ & $10^{4}$  & 3 &3  &  3  &  3\\ 
        & $10^{-6}$ & $1$ &  8 & 8 &  8 & 8 \\ 
        & $10^{-6}$ & $10^{4}$ &3 & 3 &  3  & 3\\ \hline
    \end{tabular}
    \label{Poro-3D}
\end{table}

\subsection{Linear poroelasticity in three-field formulation}
\label{Section_LinPoro3by3}

The 2D and 3D linear poroelasticity examples are the same as in the previous subsection.
Tables~\ref{Poro-3field-2D} and \ref{Poro-3field-3D} show the iteration numbers of MINRES and GMRES for the preconditioned systems in 2D and 3D, respectively.
Parameters are taken as $\Delta t = 10^{-3}$ and $10^{-6}$ and $\lambda = 1$ and $10^4$.
Zero initial values are used for the iterative solvers.
The iteration number is in a comparable range with and behaves similarly as
that for the linear elasticity examples in Subsection~\ref{Section_Lin2by2}.
Particularly, the number is almost constant as the mesh is refined for each
case and increases mildly as $\lambda$ changes from $1$ to $10^{4}$.
Overall, the iteration number stays reasonably small for all cases.

It is interesting to point out that the iteration numbers for the three-field
formulation in this subsection are significantly larger than those
for the two-field formulation in the previous subsection.
This is reflected by the fact that $\mathcal{P}_d$ (\ref{Pd-poro}) and
$\mathcal{P}_t$ (\ref{Pt-poro}) are better preconditioners
for the two-field system than $\mathcal{P}_{d,3}$ (\ref{3field-diag}) and
$\mathcal{P}_{t,3}$ (\ref{3field-tri}) for the three-field system.
Nevertheless, the three-field formulation can still be more efficient to solve
than the two-field formulation. For both formulations, the solution cost
can be roughly measured by the number of calls to the solution of linear
systems associated with $A_1$. This number for the three-field formulation
is simply the iteration number of MINRES or GMRES for the three-field system
whereas for the two-field formulation it is equal to the multiplication
of the iteration numbers for the two-field system and for the linear
elasticity system $\epsilon A_1 + A_0$. The numerical results show that
the former is much smaller than the latter for all cases.

For comparison purpose, the iteration number for MINRES and GMRES is also listed
in Tables~\ref{Poro-3field-2D} and \ref{Poro-3field-3D} for the situation
without regularization. For this situation, the convergence of MINRES and GMRES
with corresponding inexact block preconditioners has been studied in
Huang and Wang~\cite{HuangWang_2025}, showing that the convergence factor
is essentially independent of the mesh size and locking parameter but
a few more iterations are needed to process the small eigenvalue
of the preconditioned system as $\epsilon \to 0$.
Tables~\ref{Poro-3field-2D} and \ref{Poro-3field-3D}
show that the situation without regularization has almost the same
number of iterations as the situation with regularization for $\lambda = 1$ 
but has a few more iterations than the latter for $\lambda = 10^{4}$.
This is consistent with the theoretical analysis.


\begin{table}[htb!]
    \centering
    \caption{The 2D Example for linear poroelasticity in three-field formulation: The number of MINRES and GMRES iterations required to reach convergence for preconditioned systems $\mathcal{P}_{d,3}^{-1} \mathcal{A}_3$ and $\mathcal{P}_{t,3}^{-1} \mathcal{A}_3$, respectively, with $\lambda = 1$ and $10^{4}$, $\Delta t = 10^{-3} \text{ and } 10^{-6}$, with and without regularization.}
        \begin{tabular}{|c|c|c|c|c|c|c|c|}
        \hline
        & & & \multicolumn{4}{c|}{$N$} \\ \cline{4-7}
       & $\Delta t$ & $\lambda$ & 918 & 3680 & 14728 & 58608 \\ \hline
       \multicolumn{7}{|c|}{With regularization}
       \\
       \hline
       MINRES & $10^{-3}$ & $1$ & 41 & 43 & 43 & 45 \\ 
        & $10^{-3}$ & $10^{4}$  & 48 & 52 & 56 & 58 \\
         & $10^{-6}$ & $1$ & 44 & 44 & 46 & 48 \\ 
        & $10^{-6}$ & $10^{4}$  & 48 & 52 & 56 & 58 \\
       \hline 
       GMRES & $10^{-3}$ & $1$ & 22 & 22 & 21 & 21 \\ 
        & $10^{-3}$ & $10^{4}$  & 25 & 26 & 27 & 28 \\ 
        & $10^{-6}$ & $1$ & 24 & 23 & 23 & 23 \\ 
        & $10^{-6}$ & $10^{4}$ & 25 & 26 & 27 & 28 \\ \hline
        \multicolumn{7}{|c|}{No regularization} \\
        \hline
          MINRES & $10^{-3}$ & $1$ & 41 & 43 & 43 & 45 \\ 
        & $10^{-3}$ & $10^{4}$  & 64 & 68 & 68 & 74 \\
         & $10^{-6}$ & $1$ & 44 & 46 &46  & 48 \\ 
        & $10^{-6}$ & $10^{4}$ &  64& 68 & 68 & 74 \\
       \hline 
       GMRES & $10^{-3}$ & $1$  & 22 & 22 & 21 & 21 \\ 
        & $10^{-3}$ & $10^{4}$ & 33 & 35 & 33 &  34\\ 
        & $10^{-6}$ & $1$ & 24 &  23& 23 &  21\\ 
        & $10^{-6}$ & $10^{4}$ &33  & 35 & 33 & 34 \\ \hline
    \end{tabular}
    \label{Poro-3field-2D}
\end{table}


\begin{table}[htb!]
    \centering
    \caption{The 3D Example for linear poroelasticity in three-field formulation: The number of MINRES and GMRES iterations required to reach convergence for preconditioned systems $\mathcal{P}_{d,3}^{-1} \mathcal{A}_3$ and $\mathcal{P}_{t,3}^{-1} \mathcal{A}_3$, respectively, with $\lambda = 1$ and $10^{4}$, $\Delta t = 10^{-3} \text{ and } 10^{-6}$, with and without regularization.}
        \begin{tabular}{|c|c|c|c|c|c|c|c|}
        \hline
        & & & \multicolumn{4}{c|}{$N$} \\ \cline{4-7}
       & $\Delta t$ & $\lambda$ & 65171 & 526031 & 1777571 & 4217332  \\ \hline
       \multicolumn{7}{|c|}{With regularization}
       \\
       \hline
       MINRES & $10^{-3}$ & $1$ & 52 &54  &54& 56  \\ 
        & $10^{-3}$ & $10^{4}$  & 78 &  84&88  & 90 \\
         & $10^{-6}$ & $1$ &55 & 58 &59 &60   \\ 
        & $10^{-6}$ & $10^{4}$ &  78& 84 & 88 & 90 \\
       \hline 
       GMRES & $10^{-3}$ & $1$ & 27 & 27 &27 & 26  \\ 
        & $10^{-3}$ & $10^{4}$  &42  & 43 & 45 &46  \\ 
        & $10^{-6}$ & $1$ & 29 & 29 & 29& 29  \\ 
        & $10^{-6}$ & $10^{4}$ &42  & 43 &45 & 46   \\ \hline
       \multicolumn{7}{|c|}{No regularization}
       \\
       \hline
       MINRES & $10^{-3}$ & $1$ & 52 & 54 &55 & 56   \\ 
        & $10^{-3}$ & $10^{4}$  & 102 & 110 &114& 114 \\
         & $10^{-6}$ & $1$ &55 & 58 &59 & 60  \\ 
        & $10^{-6}$ & $10^{4}$ &102  & 110 & 114 & 114 \\
       \hline 
       GMRES & $10^{-3}$ & $1$ &  27& 27 & 27& 27   \\ 
        & $10^{-3}$ & $10^{4}$  &  55& 57 & 58& 57   \\ 
        & $10^{-6}$ & $1$ & 29 & 30 & 30& 29   \\ 
        & $10^{-6}$ & $10^{4}$  & 55 & 57 &58 & 57  \\ \hline
    \end{tabular}
    \label{Poro-3field-3D}
\end{table}

\section{Conclusions}
\label{SEC:conclusions}
In the previous sections we have studied an inherent regularization strategy
and the block Schur complement 
preconditioning for the efficient iterative solution linear poroelasticity problems
discretized using the lowest-order weak Galerkin finite element method in space
and the implicit Euler scheme in time.
The leading block $\epsilon A_1 + A_0$ becomes nearly singular for the locking
regime when $\epsilon = \mu/(\lambda + \mu) \to 0$.
This makes this leading block and the whole system challenging to solve using iterative methods.

To address this difficulty, we have applied an inherent regularization to the leading block
that corresponds to a linear elasticity problem.
It has been shown in Section~\ref{sec::reg} that
a linear elasticity problem can be reformulated as
a saddle point system by introducing a numerical pressure variable $\V{z}_h$.
For this saddle point system, a regularization strategy has been proposed, with which
an equality $-\rho \V{w}\V{w}^T \V{z}_h = \V{0}$ is added to the second block equation,
where $\V{w}$ is defined in (\ref{w-1}).
The regularized system preserves the solution since $\V{w}^T \V{z}_h = 0$,
or $-\rho \V{w}\V{w}^T \V{z}_h = \V{0}$, is an inherent equality of the original system.
Moreover, it has been shown that
conventional inexact block diagonal and triangular Schur complement
preconditioners are effective for the non-singular regularized system.
The bounds for the residual of the preconditioned MINRES and GMRES, stated
in Propositions~\ref{pro:MINRES_conv} and \ref{pro:GMRES_conv}, respectively,
show that both methods have convergence essentially independent
of $h$ (the mesh size) and $\lambda$ (the locking parameter).

The efficient iterative solution of the linear poroelasticity problem in a two-field approach
was studied and the convergence analysis of MINRES and GMRES was presented in Section~\ref{sec:poro}.
With inexact block diagonal and triangular Schur complement preconditioners,
the bounds for the residuals of MINRES and GMRES have been established in 
Propositions~\ref{pro:MINRES_conv-poro} and \ref{pro:poro_GMRES_conv}, respectively.
These bounds show that the convergence of MINRES and GMRES is essentially independent of
$h$ and $\lambda$ for linear poroelasticity.
It is worth emphasizing that the implementation of the block Schur complement preconditioning
for poroelasticity requires to carry out the action of inversion of the leading block $\epsilon A_1 + A_0$,
which can be done efficiently as described in Section~\ref{sec::reg}
for solving linear elasticity problems.

A three-field formulation, obtained by introducing a numerical pressure variable from
the two-field formulation, has been studied in Section~\ref{sec:poro3}.
The inherent regularization strategy has been extended to this formulation;
cf. (\ref{3field_eqn2}) and \eqref{3field_eqn4}. 
The eigenvalues of its Schur complement are bounded above and below by positive constants,
staying away from zero (cf. Lemma~\ref{lem:eigen_bound_3field}), and
MINRES and GMRES, with inexact block diagonal and triangular Schur complement preconditioners,
exhibit convergence essentially independent of the mesh size and locking parameter.
Moreover, as discussed in Subsection~\ref{sec:poro3-implementation}, the solver requires only two levels of nested iteration: an outer loop with MINRES or GMRES for the entire system
and an inner loop with PCG for $A_1$. 

Numerical results for linear elasticity and poroelasticity problems in both two and three dimensions were presented in Section~\ref{SEC:numerical}.
The effectiveness of the regularization and robustness of the block preconditioners with respect to
the mesh size and locking parameter were confirmed.

\section*{Acknowledgments}

W.~Huang was supported in part by the Simons Foundation grant MPS-TSM-00002397.

\bibliographystyle{abbrv}

\end{document}